\def\FullBox{\hbox{\vrule width 8pt height 8pt depth 0pt}}
\newcommand{\QED}{\;\;\;\FullBox}
\renewenvironment{proof}{\noindent{{\textbf{Proof:}~}}} {\hfill\QED}
\providecommand{\email}[1]{\href{mailto:#1}{\nolinkurl{#1}\xspace}}
\def\FullBox{\hbox{\vrule width 8pt height 8pt depth 0pt}}
\newcommand{\cA}{{\cal A}}
\newcommand{\cB}{\mathcal{B}}
\newcommand{\cD}{\mathcal{D}}
\newcommand{\cM}{{\cal M}}
\newcommand{\cN}{{\cal N}}
\newcommand{\RR}{\mathbb R}
\newcommand{\var}{\mathrm{var}}
\newcommand{\norm}[1]{\left\lVert #1 \right\rVert}
\newcommand{\Exp}{\EX}
\newcommand{\EX}{\hbox{\bf E}}
\newcommand{\grad}{\nabla}
\newcommand{\Eqn}[1]{\hyperref[eq:#1]{(\ref*{eq:#1})}} 
\newcommand{\Thm}[1]{\hyperref[thm:#1]{Theorem\,\ref*{thm:#1}}} 
\newcommand{\Lem}[1]{\hyperref[lem:#1]{Lemma\,\ref*{lem:#1}}} 
\newcommand{\Clm}[1]{\hyperref[clm:#1]{Claim~\ref*{clm:#1}}} 
\title{Monotonicity Testing of High-Dimensional Distributions with Subcube Conditioning\footnote{This work was partially completed while authors were visiting the Simons Institute for the Theory of Computing at UC Berkeley. DC is supported by National Science Foundation (NSF) under grants CCF-2041920, CCF-2402571.
XC is supported   by NSF grants CCF-2106429, CCF-2107187. CS is supported by NSF grants CCF-1740850, CCF-1839317, CCF-2402572, and DMS-2023495.
EW is supported by NSF grant  CCF-2337993.}}
\author{
Deeparnab Chakrabarty\\
Dartmouth College\\
{\tt deeparnab@dartmouth.edu}
\and
Xi Chen\\
Columbia University\\
{\tt xichen@cs.columbia.edu}
\and
Simeon Ristic\\
University of Pennsylvania\\
{\tt sristic@seas.upenn.edu}
\and
C. Seshadhri\\
University of California, Santa Cruz\\
{\tt sesh@ucsc.edu}
\and
Erik Waingarten \\
University of Pennsylvania\\
{\tt ewaingar@seas.upenn.edu}
}
   \date{}
\begin{document}
\maketitle

\begin{abstract}
\ignore{Distribution testing has led to a plethora of results, but for most interesting properties, the query\xnote{Should this be sample complexity?} complexity
is polynomial in the domain size. For high-dimensional domains, such complexities are infeasible.
The subcube conditional model (Canonne-Ron-Servedio, SICOMP 2015 and Bhattacharyya-Chakraborty, TOCT 2018)
allows for sampling subcubes of the domain, often leading to queries polynomial in the \emph{dimension}.

We study the classic property of monotonicity of distributions over $\{0,1\}^n$, under the subcube conditional model.
Consider an unknown input distribution $p$ over $\{0,1\}^n$. A distribution is monotone if 
$p(x) \leq p(y)$ for any $x \preceq y$, where $\preceq$ denotes the standard coordinate-wise partial order over $\{0,1\}^n$.
The aim is to distinguish monotone $p$ from $p$ being $\eps$-far (in TV distance) from monotone.
Up to logarithmic factors, we resolve the query complexity to be $\Theta(n/\eps^2)$.

For the upper bound, we take inspiration from the literature of monotonicity testing of Boolean functions.
We run an ``edge tester" for testing distribution monotonicity. To bound the query complexity, we
rely on directed isoperimetric theorems that relate the "directed boundary" of functions
to their distance to monotonicity. We prove a real-valued analogue of the directed Talagrand theorem (Khot-Minzer-Safra, SICOMP 2018),
which is a key tool in our proof.
For the lower bound, we reduce to dealing with product distributions under the standard sampling model. 
We develop lower bounds for determining the bias of coordinates, which are then applied to distributional monotonicity testing.

Applying our methods, we also prove an $\tilde{\Omega}(\sqrt{n}/\eps^2)$ lower bound for uniformity testing
with subcube conditional queries, under the promise that the distribution is monotone. This matches the 
upper bound up to logarithmic factors, which holds for arbitrary distributions. Thus, it proves that monotonicity
does not help for uniformity testing with subcube conditional queries. }

We study monotonicity testing of high-dimensional distributions on $\{-1,1\}^n$ in the model of subcube conditioning, suggested and studied by Canonne, Ron, and Servedio~\cite{CRS15} and Bhattacharyya and Chakraborty~\cite{BC18}. Previous work shows that the \emph{sample complexity} 
of monotonicity testing must be exponential in $n$ 
(Rubinfeld, Vasilian~\cite{RV20}, and 
Aliakbarpour, Gouleakis, Peebles, Rubinfeld, Yodpinyanee~\cite{AGPRY19}). We show that the subcube \emph{query complexity} is $\tilde{\Theta}(n/\eps^2)$,
by proving nearly matching upper and lower bounds. Our work is the first to use directed isoperimetric inequalities (developed for function monotonicity testing) for analyzing a distribution testing algorithm. Along the way, we generalize an inequality of Khot, Minzer, and Safra~\cite{KMS18} to real-valued functions on $\{-1,1\}^n$.

We also study uniformity testing of distributions that are promised to be monotone, a problem introduced by Rubinfeld, Servedio~\cite{RS09}
, using subcube conditioning. We show that the query complexity is $\tilde{\Theta}(\sqrt{n}/\eps^2)$. Our work proves the lower bound, which matches (up to poly-logarithmic factors) the uniformity testing upper bound for general distributions (Canonne, Chen, Kamath, Levi, Waingarten~\cite{CCKLW21}).
Hence, we show that monotonicity does not help, beyond logarithmic factors, in testing uniformity of distributions with subcube conditional queries.

\end{abstract}
\thispagestyle{empty}
\newpage
\begin{spacing}{0.75}
\tableofcontents
\end{spacing}
\thispagestyle{empty}

\newpage

\pagenumbering{arabic}
\setcounter{page}{1}


\section{Introduction}

Distribution testing is the study of algorithms for testing properties of probability distributions \cite{GR00, BFRSW00}. A distribution testing problem is specified by a class of distributions $\calP$ 
supported on a domain $\Sigma$.  The aim is to get low-complexity algorithms that distinguish an unknown probability distribution $p$ having the property (i.e., $p \in \calP$) 
from one that is ``far" from having the property. One hallmark result is an algorithm and a matching lower bound, showing that $\Theta(\sqrt{|\Sigma|} / \eps^2)$ independent samples are necessary and sufficient for testing whether $p$ is the uniform distribution~(see, the recent survey~\cite{C22} and references therein for an overview of the area). For most problems of interest, such a polynomial dependence on the support size $\Sigma$ is intrinsic. This makes classical distribution testing algorithms intractable for high-dimensional distributions, such as those supported on the hypercube $\Sigma = \{-1,1\}^n$, where the complexity becomes exponential in the dimension.

To circumvent this issue, Bhattacharrya-Chakraborty~\cite{BC18} and Canonne-Ron-Servedio~\cite{CRS15} introduced the \emph{subcube conditional model}
for distributions supported on $\{-1,1\}^n$. An algorithm can query the distribution $p$ with a subcube $\rho \in \{-1,1, *\}^n$, and receive an independent sample $\bx \sim p$ conditioned on $\bx_i = \rho_i$ for all $i$ where $\rho_i \neq *$. We highlight some appealing aspects of this model.

\begin{flushleft}\begin{itemize}
\item For high-dimensional distributions, the subcube conditional model may provide an appropriate analogue to a ``membership query'' in learning theory, where distribution testing algorithms can overcome exponential dependencies in the dimension (e.g., see~\cite{CJLW21b, BLMT23}).
\item There is a growing body of work seeking to make high-dimensional distribution testing practical (particularly in the context of testing software that produces samples), where one can often implement more powerful query oracles, and in particular, the subcube conditional sampling oracle (e.g., see~\cite{MPC20, PM22,BCPSS24}).
\item Subcube conditioning lends itself to an elegant mathematical analysis, often leading to query complexities polynomial in the dimension (as opposed
    to domain size). The key to an efficient testing result often involves (approximately) determining a global property of the distribution while only estimating marginals after conditioning on (random) subcubes of the domain (e.g., see~\cite{CCKLW21, BCSV23, CM24, AFL24}).
\end{itemize}\end{flushleft}

In this paper, we focus on the classic property of monotonicity of distributions. We use $\preceq$ to denote the coordinate-wise partial order on $\{-1,1\}^n$ and use $p$ to denote the probability mass function.
A distribution $p$ supported on $\{-1,1\}^n$ is monotone if, for any pair $x, y \in \{-1,1\}^n$ with $x \preceq y$, 
$p(x) \leq p(y)$. 
Monotonicity arises naturally in many scenarios and is a desirable property from an algorithmic perspective~(e.g., \cite{BLMT23}). 
For any distribution $p$, we define the \emph{distance to monotonicity} as $\min_{q } \dtv(p,q)$, 
where the minimum is over all monotone distributions $q$ and $\dtv$ is the total variation distance between distributions. We say that $p$ is \emph{$\eps$-far from monotone}
if the distance to monotonicity is at least $\eps$.
A \emph{tester for monotonicity} gets access to a distribution $p$ and a proximity parameter $\eps \in (0,1)$. 
If $p$ is monotone, the tester accepts with probability $> 2/3$. If $p$ is $\eps$-far from monotone,
the tester rejects with probability $> 2/3$.
Our main question is: what is the query complexity of monotonicity testing of distributions over the hypercube $\{-1,1\}^n$, in the subcube conditional model?


%
%
%

\subsection{Our Contributions} \label{sec:contributions}

\paragraph{Testing Monotonicity.} Our first results are a testing algorithm and a nearly-matching lower bound which shows the query complexity of testing monotonicity in the subcube conditional model is \emph{linear} in the dimension.

\begin{theorem}[Monotonicity Testing Upper Bound]\label{thm:intro-ub}
There is an algorithm for testing monotonicity of distributions over $\{-1,1\}^n$ that uses $\tilde{O}(n/\eps^2)$ subcube conditioning queries. The algorithm works in the weaker \emph{coordinate oracle} model~\cite{BCSV23}, where queries are only made on one-dimensional subcubes.
\end{theorem}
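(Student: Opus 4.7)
The plan is to design an ``edge tester'' that inspects random edges of the hypercube using one-dimensional coordinate oracle queries. Given coordinate oracle access to $p$, repeat $T=\tilde{O}(n/\eps^2)$ times: pick a coordinate $i\in [n]$ uniformly and a fixing $x_{-i}\in\{-1,1\}^{n-1}$ from an appropriate distribution (the natural choice is the marginal of $p$ on the other coordinates, which can be sampled by drawing one sample from $p$ and discarding coordinate $i$). Submit the subcube fixing all coordinates $j\neq i$ to $x_{-i}$ to the coordinate oracle and draw a short burst of coordinate-$i$ samples. Each sample is a biased coin with bias $p(x^{+i})/(p(x^{-i})+p(x^{+i}))$. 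Flag the edge as ``violating'' if the empirical bias is significantly tilted towards $-1$, and reject if any probe detects such a violation.

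\textbf{Soundness.} Treat $p:\{-1,1\}^n\to\RR_{\geq 0}$ as a real-valued function and first observe that its $\ell_1$-distance to the nearest monotone real-valued function equals, up to constants, twice its TV distance to the nearest monotone \emph{distribution} (the nearest monotone real-valued approximation automatically has nearly unit mass, which can be renormalized at small cost). The heart of the argument is a real-valued analogue of the Khot--Minzer--Safra~\cite{KMS18} directed Talagrand inequality, asserting that this $\ell_1$-distance is upper bounded by a Talagrand-style aggregate of the per-edge directed decreases $(p(x)-p(x+e_i))^+$ weighted appropriately. Plugging in and averaging, if $p$ is $\eps$-far from monotone then a random edge probe sees a detectable violation with probability $\tilde{\Omega}(\eps^2/n)$, and $T=\tilde{O}(n/\eps^2)$ probes catch one with constant probability.

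\textbf{Main obstacle.} The hardest part is the real-valued directed Talagrand--KMS inequality. The Boolean proof of~\cite{KMS18} leverages the indicator-function structure and a delicate monotone-correction / chaining argument tailored to $\{0,1\}$-valued functions. Generalizing to real-valued $p$ requires either (i) decomposing $p$ into super-level-set indicators and carefully summing KMS contributions while avoiding spurious polylogarithmic losses, or (ii) redoing the monotone-correction argument at the real-valued level while keeping the $\ell_1$ mass preserved. A further subtlety is that the edge tester observes only the ratio $p(x^{+i})/(p(x^{-i})+p(x^{+i}))$, so the isoperimetric statement must be weighted in a way that translates edge violations into coin-bias detectable within the per-edge sample budget.
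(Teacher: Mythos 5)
Your proposal follows the same approach as the paper: an edge tester using one-dimensional coordinate-oracle queries, soundness via a real-valued directed Talagrand inequality proved through the super-level-set (Berman--Raskhodnikova--Yaroslavtsev thresholding) decomposition, and a reduction from TV-distance-to-monotone-distributions to $\ell_1$-distance-to-monotone-functions by renormalizing. You also correctly spot that what the per-edge probe observes is the ratio $p(x^{+i})/(p(x^{-i})+p(x^{+i}))$; the paper handles this by noting that sampling $\bx \sim p$ effectively divides each term in the Talagrand bound by $p(\bx)$, turning absolute gaps into biases.

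There is, however, a real gap in the averaging step. You write that ``a random edge probe sees a detectable violation with probability $\tilde\Omega(\eps^2/n)$,'' and that $T = \tilde O(n/\eps^2)$ probes therefore suffice. This claim is not self-consistent: ``detectable'' must be calibrated against the per-edge sample budget, and the Talagrand inequality only controls the expectation of $\sqrt{\sum_i b(\bx,i)^2}$, leaving the bias free to be spread over anywhere from $1$ to $n$ coordinates at a given point $x$. If the algorithm spends $O(1)$ coordinate-oracle samples per probed edge, it only detects constant biases, and those may occur with probability as small as $\tilde\Omega(\eps/\sqrt n)$ rather than $\tilde\Omega(\eps^2/n)$; if instead it spends enough samples to detect bias $\eps/\sqrt n$, then $\tilde O(n/\eps^2)$ such probes over-count by a factor of $n/\eps^2$. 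The fix in the paper is a scale search: enumerate $O(\log(n/\eps))$ geometrically spaced bias thresholds $\sqrt\eta$, and at each scale run $t \propto 1/(\text{fraction of good edges at that scale})$ probes with $m \propto 1/\eta$ conditional samples per probe; a bucketing argument shows that for some scale the product $t\cdot m = \tilde O(n/\eps^2)$, and since the number of scales is logarithmic the total remains $\tilde O(n/\eps^2)$. Without this multi-scale step, the single-threshold edge tester does not achieve the claimed query complexity.
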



\begin{theorem}[Monotonicity Testing Lower Bound]\label{thm:intro-lb}
Any monotonicity tester of distributions using subcube conditioning must use $\tilde{\Omega}(n/\eps^2)$ queries.
\end{theorem}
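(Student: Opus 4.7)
The plan is to construct two families of product distributions on $\{-1,1\}^n$---a ``yes'' family of monotone distributions and a ``no'' family of distributions at total variation distance at least $\eps$ from every monotone distribution---and then apply a $\chi^2$-based two-point (Le Cam) argument after reducing subcube conditioning on product distributions to standard sampling.

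The first step is to observe that for product distributions $p = p_1 \otimes \cdots \otimes p_n$ on $\{-1,1\}^n$, subcube conditioning provides no more information than standard i.i.d.\ sampling: conditioning on any $\rho \in \{-1,1,*\}^n$ leaves the free coordinates independent with their original marginals, so an adaptive sequence of subcube queries can be simulated by an equally long sequence of i.i.d.\ samples. It therefore suffices to prove a sample-complexity lower bound on product distributions in the standard model. I would then pick a bias $\mu = \Theta(\eps/\sqrt{n})$, designate the yes distribution as the product $P^+$ with all marginal biases equal to $+\mu$ (manifestly monotone), and consider no distributions of the form $P_\sigma$ with marginal biases $\sigma_i \mu$, where $\sigma \in \{-1,+1\}^n$ is drawn from a carefully chosen prior.

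To argue that $P_\sigma$ is $\eps$-far from monotone, I would use the structural fact that any monotone distribution $q$ satisfies $q(A) \ge u(A)$ for every up-set $A \subseteq \{-1,1\}^n$, where $u$ is uniform. Applying this to the majority up-set $A = \{x : \sum_i x_i \ge 1\}$, a CLT-type computation shows that a sufficiently negatively-skewed bias pattern pushes $P_\sigma(A)$ below $u(A) \approx 1/2$ by $\Omega(\eps)$, certifying $d_{\mathrm{TV}}(P_\sigma, q) = \Omega(\eps)$ for every monotone $q$. For the indistinguishability step, the key computation is
\[
\chi^2\bigl(\EX_\sigma\bigl[P_\sigma^{\otimes m}\bigr]\,\big\|\,(P^+)^{\otimes m}\bigr)+1 \;=\; \EX_{\sigma,\sigma'}\Bigl[\prod_{i=1}^n \Lambda(\sigma_i,\sigma'_i)^{m}\Bigr],
\]
where $m$ is the number of samples and $\Lambda$ is the per-coordinate Pearson factor obtained by integrating likelihood ratios of $P_{\sigma_i}$ and $P_{\sigma'_i}$ against the yes marginal. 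An explicit computation gives $\Lambda(\sigma,\sigma') = 1$ unless $\sigma = \sigma' = -1$, in which case $\Lambda(-,-) = (1+3\mu^2)/(1-\mu^2) = 1 + \Theta(\mu^2)$. When the $\sigma_i$'s are independent under the prior the expectation factorizes across coordinates, and for a sufficiently small per-coordinate ``$-$'' probability one can bound the whole quantity by a constant whenever $m = \tilde O(n/\eps^2)$. Le Cam's method then yields the claimed lower bound.

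The main obstacle is choosing the prior on $\sigma$ so as to defeat aggregate-statistic testers: a uniformly random $\sigma$ prior is distinguishable from the yes distribution in only $\tilde O(1/\eps^2)$ samples using the empirical average $\frac{1}{n}\sum_i \overline{x}_i$, which falls far short of the desired $\tilde\Omega(n/\eps^2)$ bound. The ``lower bounds for determining the bias of coordinates'' that the abstract alludes to are exactly what let one match the low-order moments of the yes distribution---so that aggregate statistics are uninformative---while still producing a net $\Omega(\eps)$ anti-monotone signal that forces a tester to spend $\Omega(1/\mu^2) = \Omega(n/\eps^2)$ per-coordinate effort. Designing the prior so that both (a) $\eps$-farness from monotone and (b) the $\chi^2$ bound at $m = \tilde\Omega(n/\eps^2)$ hold simultaneously is the crux of the argument.
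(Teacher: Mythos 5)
Your high-level plan matches the paper's: reduce subcube conditioning on product distributions to i.i.d.\ sampling, plant hidden negative biases of magnitude $\Theta(\eps/\sqrt{n})$, and prove indistinguishability by moment matching. You also correctly identify the trap (a uniformly random sign prior is killed by $\frac{1}{n}\sum_i \overline{x}_i$ in $\tilde O(1/\eps^2)$ samples) and correctly state that the crux is designing a prior that both matches moments and retains $\Omega(\eps)$-farness. But you then leave that crux unresolved, and the parametrization you commit to cannot resolve it.

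The specific gap: you fix the yes distribution to be a \emph{single} product distribution $P^+$ with all marginal biases equal to $+\mu$, and restrict the no biases to $\sigma_i\mu$ with $\sigma_i\in\{-1,+1\}$. With that restriction, no prior on $\sigma$ can match even the \emph{first} moment of the yes biases: any prior assigning probability $p_->0$ to $\sigma_i=-1$ gives $\Ex[\sigma_i\mu]=(1-2p_-)\mu<\mu$, which is immediately detectable by the aggregate estimator. And if $p_-=O(1/\sqrt n)$ (the largest value for which your per-coordinate $\chi^2$ factor $\Ex[\Lambda(\bsigma,\bsigma')^m]=1+p_-^2\Theta((1+c\mu^2)^m-1)$ stays $1+O(1/n)$ at $m\asymp n/\eps^2$), then only $O(\sqrt n)$ coordinates carry negative bias $-\eps/\sqrt n$, giving distance to monotonicity only $O(\eps/n^{1/4})$, not $\Omega(\eps)$. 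To have any hope, the no side must be allowed \emph{larger positive} biases to compensate the $-1$ mass when matching higher moments, and the yes side must itself be a nontrivial random mixture so that it shares those positive-bias fluctuations. This is exactly what the paper does: it builds a pair of one-dimensional distributions $\cA$ (supported on $\{0\}\cup\{i^3:i\le\ell\}$) and $\cB$ (supported on $\{-1\}\cup\{j^3:j\le\ell\}$) with $\ell=\log n/\log\log n$ matching moments via a Vandermonde linear system (Lemma~\ref{lem:one-dim-dist}, Claim~\ref{clm:inv-and-z}), scales them by $\eps/\sqrt n$ to get the per-coordinate bias priors, and then proves indistinguishability not through a clean $\chi^2$ factorization but by a Taylor-expansion argument on the count-vector distribution $\calR_y$ vs.\ $\calR_n$ (Lemmas~\ref{lemma:hehe} and~\ref{lemma:haha}). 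Your up-set argument for $\eps$-farness (via $q(A)\ge u(A)$ for monotone $q$ and up-sets $A$, applied to the majority up-set) is a valid alternative to the paper's explicit matching construction in Lemma~\ref{lem:far-mon-dist}, but that is a secondary point; without the correct bias priors the proposal does not yield the lower bound.
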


We will overview the proofs of \Thm{intro-ub} and \Thm{intro-lb} shortly. Roughly speaking, the upper bound will follow from exploiting a connection between monotonicity testing and directed isoperimetric inequalities, and defining an ``edge tester'' for distribution testing. For the lower bound, we construct a pair of distributions over product distributions which will ``hide'' the negative biases. 

    \noindent\textbf{Directed Isoperimetry.} The key tool enabling \Thm{intro-ub} is a directed isoperimetric inequality for real-valued functions. As we expand on in Section~\ref{sec:related}, isoperimetric inequalities relate the surface area of a geometric object to its volume. In the hypercube, the volume of a subset $A \subset \{-1,1\}^n$ is its size and the surface area is a measure of edges ``crossing'' the set. \emph{Directed} isoperimetry is the phenomenon where a non-monotone function exhibits evidence of its non-monotonicity via a directed edge boundary (points where the function value decreases). Directed isoperimetric inequalities relate the ``non-monotone edge boundary'' (the measure of surface area) to the distance to monotonicity (the measure of volume). These inequalities have played a major role in testing monotonicity of functions. In this work, we use a real-valued version of a (Boolean-valued) directed isoperimetric inequality of Khot-Minzer-Safra~\cite{KMS18} (also~\cite{PRW22}, who remove a logarithmic factor), which may be of independent interest.\footnote{As we discuss in Section~\ref{sec:related}, real-valued versions of directed isoperimetric inequalities have been studied~\cite{BKR23}, although we will need one of a different form.} Namely, let $\dist_1(f)$ denote the $\ell_1$-distance to monotonicity of any 
    $f:\{-1,1\}^n\rightarrow \mathbb{R}$, i.e., the minimum over monotone functions $g \colon \{-1,1\}^n \to \R$ of $\Ex_{\bx}[|f(\bx) - g(\bx)|]$.
\begin{theorem}\label{thm:l1-talagrand}
	For any $f:\{-1,1\}^n \to \R$, we have
 \[
	\Ex_{\bx \sim \{-1,1\}^n}\left[  \left( \sum_{i:\bx_i=-1} \left(\big(f(\bx) - f(\bx^{(i)})\big)^+\right)^2 \right)^{1/2}  \right] \geq \Omega\left(\frac{1}{\sqrt{\log n}}\right)\cdot \dist_1(f).
	\]
\end{theorem}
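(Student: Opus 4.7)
My plan is to reduce \Thm{l1-talagrand} to the Boolean directed Talagrand inequality of Khot-Minzer-Safra via a threshold (layer-cake) decomposition. For each $t \in \mathbb{R}$ let $f_t(\bx) := \mathbf{1}[f(\bx) > t]$. The pointwise identity $|f(\bx) - g(\bx)| = \int_\mathbb{R} |f_t(\bx) - g_t(\bx)|\, dt$ yields the equality $\dist_1(f) = \int_\mathbb{R} \dist(f_t, \mathrm{mono})\, dt$: the ``$\geq$'' direction is immediate since each $g_t$ coming from a monotone real-valued $g$ is Boolean-monotone; for ``$\leq$'' one selects optimal monotone approximations $g_t^*$ of the nested Boolean family $\{f_t\}$ that are themselves nested (possible because the set of optimal monotone approximations of a single Boolean function is a lattice, and taking meets/joins across levels preserves nesting) and assembles them into a single monotone real-valued function achieving the matching $\ell_1$ distance.

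Next I would apply the Boolean KMS inequality to each $f_t$,
\[
\Ex_\bx\!\left[\sqrt{I^-(f_t)(\bx)}\right] \ \geq\ \tfrac{c}{\sqrt{\log n}}\cdot \dist(f_t, \mathrm{mono}),
\]
where $I^-(h)(\bx) := |\{i : \bx_i = -1,\, h(\bx) > h(\bx^{(i)})\}|$. Integrating over $t$ and using the identity above gives $\int \Ex_\bx[\sqrt{I^-(f_t)}]\, dt \geq (c/\sqrt{\log n})\cdot \dist_1(f)$. The main remaining task is to compare $\Ex_\bx[T(f)(\bx)]$ with $\int \Ex_\bx[\sqrt{I^-(f_t)}]\, dt$, writing $T(f)$ for the quantity on the left-hand side of the theorem. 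Fix $\bx$, sort the values $\{f(\bx^{(i)}) : \bx_i = -1,\, f(\bx^{(i)}) < f(\bx)\}$ as $b_1 \leq \cdots \leq b_k$ with gaps $c_j := b_{j+1} - b_j$ (setting $b_{k+1} := f(\bx)$); a direct computation then gives $\int \sqrt{I^-(f_t)(\bx)}\, dt = \sum_{j=1}^k \sqrt{j}\, c_j$ and $T(f)(\bx)^2 = \sum_{j,j'} \min(j, j')\, c_j c_{j'}$.

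The main obstacle is carrying out this final comparison without losing an extra $\sqrt{\log n}$ factor. An Abel-summation-plus-Cauchy-Schwarz argument shows $\int \sqrt{I^-(f_t)(\bx)}\, dt \leq O(\sqrt{\log n})\cdot T(f)(\bx)$ pointwise, and this is tight (e.g. for $c_j \sim 1/j^{3/2}$). Used naively, this yields only $\Ex[T(f)] \geq \Omega(1/\log n)\cdot \dist_1(f)$, a $\sqrt{\log n}$ factor weaker than the claim, so level-by-level aggregation is fundamentally too lossy. The fix I would pursue is to apply KMS only \emph{once}, to a Boolean function $F \colon \{-1,1\}^{n+K} \to \{0,1\}$ obtained by a unary-threshold embedding $F(\bx, \by) := \mathbf{1}[\tilde f(\bx) > |\by|^-]$, where $\tilde f$ is a rescaling of $f$ so that its values sit near $K/2$ and $|\by|^-$ is the number of $-1$ coordinates of $\by$. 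This embedding preserves monotonicity; by averaging an arbitrary monotone $G$ over permutations of the $\by$-coordinates one can reduce to $G$'s of threshold form and thereby show $\dist(F, \mathrm{mono}) \gtrsim \dist_1(f)/\sqrt{K}$; and only the $\bx$-directions contribute to $I^-(F)$, so $\Ex_{\bx,\by}[\sqrt{I^-(F)}]$ is naturally a smoothed version of the level-set quantity $\sqrt{I^-(f_t)(\bx)}$ averaged over $t = |\by|^-$. With $K$ polynomial in $n$ so that $\log(n+K) = \Theta(\log n)$, a single application of KMS on the enlarged cube would give the desired $\Omega(1/\sqrt{\log n})$ bound, provided the translation between $\Ex_{\bx,\by}[\sqrt{I^-(F)}]$ and $\Ex_\bx[T(f)(\bx)]/\sqrt{K}$ uses only constant (not logarithmic) factors. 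Getting this tight accounting to go through — in particular exploiting the local Gaussian behaviour of $|\by|^-$ near $K/2$ to avoid the same worst-case slack that afflicts the pointwise level-set argument — is where I expect the real work to lie.
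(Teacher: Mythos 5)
Your ``naive'' level-by-level reduction is in fact exactly the paper's proof, and it already gives the claimed $\Omega(1/\sqrt{\log n})$ bound. The paper rescales $f$ to $[0,1]$, uses the threshold functions $f_t$ and the Berman--Raskhodnikova--Yaroslavtsev identity $\dist_1(f)=\int_0^1 \dist_0(f_t)\,dt$, proves the pointwise inequality $\Ex_{\bt}[\norm{\grad^- f_{\bt}(x)}_2] \le O(\sqrt{\log n})\cdot\norm{\grad^- f(x)}_2$ via exactly the computation you wrote down (the sorted gaps $c_j$, the identity $\int\sqrt{I^-(f_t)(x)}\,dt=\sum_j \sqrt{j}\,c_j$, and Cauchy--Schwarz against $\sum_i(\sqrt i-\sqrt{i-1})^2=O(\log n)$), then swaps $\Ex_{\bx}$ and $\Ex_{\bt}$ and applies the Boolean directed Talagrand inequality to each slice $f_{\bt}$. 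The place you go wrong is the statement of that Boolean inequality: you wrote it with a $c/\sqrt{\log n}$ factor, but the version used in the paper (\Thm{booliso}, due to Khot--Minzer--Safra as sharpened by Pallavoor--Raskhodnikova--Waingarten, who removed the residual logarithmic factor) has a \emph{universal constant}:
\[
\Ex_{\bx}\bigl[\norm{\grad^- g(\bx)}_2\bigr] \ \ge\ C\cdot\dist_0(g) \qquad\text{for all } g\colon\{-1,1\}^n\to\{0,1\}.
\]
With this, the only loss in the whole argument is the single $O(\sqrt{\log n})$ from the layer-cake comparison you identified, giving $\Omega(1/\sqrt{\log n})$, not $\Omega(1/\log n)$. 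Your extremal example $c_j\sim j^{-3/2}$ correctly shows that this pointwise step is tight, which is precisely why the theorem carries a $\sqrt{\log n}$ and why removing it is left as an open problem in the paper --- but there is no compounding with a second $\sqrt{\log n}$ from KMS.

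Because of this, the entire speculative second half of your proposal --- the unary embedding $F\colon\{-1,1\}^{n+K}\to\{0,1\}$, the lattice-based construction of nested monotone approximations, and the ``single application of KMS on the enlarged cube'' --- is unnecessary. (It also leaves its genuinely hard steps undone: matching $\Ex[\sqrt{I^-(F)}]$ to $\Ex[\norm{\grad^- f}_2]/\sqrt{K}$ up to constants, and lower-bounding $\dist(F,\mathrm{mono})\gtrsim\dist_1(f)/\sqrt{K}$, are both stated as hopes rather than carried out, and making them rigorous would be at least as hard as the problem you are trying to avoid.) Dropping the embedding and simply invoking the constant-factor version of \Thm{booliso} completes the proof along the lines you already laid out.
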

(We use $(z)^+$ as shorthand for $\max(z,0)$, and for $x \in \{-1,1\}^n$, $x^{(i)} \in \{-1,1\}^n$ refers to the point that agrees with $x$ on all but the $i$-th coordinate.)
The proof of this theorem is obtained by a reduction to the Boolean case, using a thresholding technique of Berman-Raskhodnikova-Yaroslavstev~\cite{BeRaYa14}. 
We note that this theorem answers a open question in~\cite{F23} 
(the question mark in Table 1, which asks for an $(L^1, \ell^2)$-Poinc\'{a}re theorem)).
It is an interesting open problem to determine whether the dependence on $n$ in \Thm{l1-talagrand} can be removed (no such dependence exists for the Boolean valued case).

\noindent\textbf{Uniformity of Monotone Distributions.} We then turn our attention to a problem introduced by Rubinfeld-Servedio, where they seek distribution testing algorithms under the promise that the underlying distributions are monotone~\cite{RS09}. 
They study the classic problem of uniformity testing, and show that $\tilde{O}(n/\eps^2)$ independent samples suffice (a significant improvement over the exponential sample lower bounds). 
With subcube conditioning, one would hope for further significant improvements. In particular, uniformity testing (without any assumption on the distribution) can be done in $\tilde{O}(\sqrt{n}/\eps^2)$ queries~\cite{CCKLW21}; does the complexity decrease further when the input distribution is promised to be monotone?
We prove a lower bound, showing that uniformity testing of monotone distributions requires $\tilde{\Omega}(\sqrt{n}/\eps^2)$ subcube conditioning queries, i.e., it is equally hard for general and monotone distributions. 

\begin{theorem}[Testing Uniformity of Monotone Distributions]\label{thm:uni-of-mon}
Testing uniformity of monotone distributions requires $\tilde{\Omega}(\sqrt{n}/\eps^2)$ subcube conditioning queries.
\end{theorem}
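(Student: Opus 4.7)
The plan is to mount a Paninski-style $\chi^2$ lower bound using a family of \emph{monotone} product distributions, exploiting the fact that subcube conditioning offers no additional power on product distributions. Set $s := \lceil \sqrt{n}\rceil$ and $\delta := \Theta(\eps/\sqrt{s}) = \Theta(\eps n^{-1/4})$, let $u$ denote the uniform distribution on $\{-1,1\}^n$, and for each subset $S \subseteq [n]$ of size $s$ let $p_S$ be the product distribution whose $i$-th marginal has bias $+\delta$ if $i \in S$ and bias $0$ otherwise. Each $p_S$ is monotone since all of its per-coordinate biases are non-negative, and $\dtv(p_S, u) = \Theta(\eps)$ by a CLT/Pinsker calculation on the marginal $\sum_{i\in S} x_i$ (mean $s\delta=\eps\sqrt{s}$ and variance $\approx s$ under $p_S$, versus mean $0$ and variance $s$ under $u$). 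The ``no'' instance in the Yao-style lower bound is $p_S$ for a uniformly random size-$s$ subset $S \subseteq [n]$.

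The key reduction is that for any product distribution $p$, a subcube query with condition $\rho \in \{-1,1,*\}^n$ and fixed-coordinate set $T$ returns $x$ with $x_T = \rho_T$ and $x_{\bar T} \sim \prod_{i \notin T} p_i$, which can be generated from a single iid draw $z \sim p$ by setting $x = (\rho_T, z_{\bar T})$. Thus every subcube response is a deterministic function of an iid sample (together with the algorithm's own adaptive choice of $\rho$), and by data-processing it suffices to bound the chi-squared divergence $\chi^2(\Ex_S[p_S^{\otimes k}],\, u^{\otimes k})$, where the expectation is over the uniform random subset $S$.

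The standard product-distribution identity $\sum_x p_S(x)p_{S'}(x) = 2^{-n}\prod_i(1+b_i(S)b_i(S'))$ with $b_i(S)b_i(S') = \delta^2\cdot\mathbf{1}[i \in S \cap S']$ reduces the chi-squared to
\[
\chi^2 \;=\; \Ex_{S,S'}\bigl[(1+\delta^2)^{k|S \cap S'|}\bigr] - 1.
\]
The intersection $|S \cap S'|$ is hypergeometric with mean $s^2/n = 1$, well-approximated by $\mathrm{Poisson}(1)$, so its MGF $\exp(e^t - 1)$ at $t = k\ln(1+\delta^2) \approx k\delta^2$ yields $\chi^2 = O(k\delta^2) = O(k\eps^2/\sqrt{n})$ as long as $k\delta^2 = O(1)$. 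Choosing $k \leq c\sqrt{n}/\eps^2$ for a sufficiently small constant $c$ keeps $\chi^2 \leq 1/3$, so $\dtv \leq \tfrac12\sqrt{\chi^2} < 1/3$, ruling out every tester.

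The principal obstacle is the tension between monotonicity and the Paninski template: the classical construction uses mean-zero biases $\pm\delta$, but monotonicity forces every marginal bias to be non-negative, which would normally give away the ``no'' case via a simple estimator of the average bias. The choice $|S| = \sqrt{n}$ is precisely the balance point: on one hand the expected intersection $s^2/n = \Theta(1)$ keeps $\chi^2$ bounded up to $k \asymp \sqrt{n}/\eps^2$; on the other hand the per-coordinate expected bias $\delta\cdot s/n = \Theta(\eps n^{-3/4})$ is itself too small to be detected by a global mean-bias estimator in $o(\sqrt{n}/\eps^2)$ samples. The remaining technical care lies in cleanly handling the adaptivity of the algorithm in the iid reduction and in tracking the polylogarithmic factors coming from the Poisson approximation and the anti-concentration of $\sum_{i\in S} x_i$, which together account for the $\tilde{\Omega}$ in the stated bound.
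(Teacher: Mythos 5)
Your approach is correct but genuinely different from the paper's. The paper uses the same reduction to iid samples (its Lemma~\ref{lem:tree-to-iid}) and a similar construction for $\Dno$ (each coordinate \emph{independently} gets bias $\eps/n^{1/4}$ with probability $1/\sqrt n$, versus your fixed-size random subset $S$ of $\sqrt n$ biased coordinates --- essentially the same up to Poissonization). The indistinguishability analysis is where you diverge: the paper passes to Gaussians via the sign map and shows a \emph{one-sided} likelihood-ratio bound $f_n/f_y \geq 1-o(1)$ with high probability under $f_y$, with the heart of the calculation being the variance of a sum of exponentials of Gaussians via the Gaussian MGF (its Claims~\ref{clm:wj} and \ref{clm:exp-gauss}); you instead run the Ingster/Paninski second-moment method directly in the discrete world, using the product-distribution identity $\sum_x p_S(x)p_{S'}(x)/u(x) = 2^{-n}(1+\delta^2)^{|S\cap S'|}$ and the MGF of the hypergeometric overlap. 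Your route buys a couple of things: (i) it avoids the detour through Gaussians entirely; (ii) every $p_S$ is $\Omega(\eps)$-far from uniform deterministically, not just with high probability over the draw from $\Dno$; and (iii) if you make the Poisson step rigorous via Hoeffding's binomial domination of the hypergeometric (so $\chi^2 + 1 \leq (1+(s/n)((1+\delta^2)^k-1))^s \leq \exp((1+\delta^2)^k-1)$ with $s^2/n=1$), the bound is actually $\Omega(\sqrt n/\eps^2)$ \emph{without} the polylogarithmic loss --- the $\log^4 n$ in the paper's proof comes from the Chebyshev step in Claim~\ref{clm:exp-gauss} and the truncation in Claim~\ref{clm:wj}, losses your $\chi^2$ computation doesn't incur, so you should expect to drop the $\tilde\Omega$ to a clean $\Omega$ rather than pay the logs you anticipate in your last paragraph. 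The paper's one-sided argument is arguably more robust to situations where the likelihood ratio has a heavy upper tail (which would blow up $\chi^2$); here that tail is harmless as you can verify from the Hoeffding-dominated MGF, so the $\chi^2$ method is the cleaner choice for this specific construction.
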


\ignore{\paragraph{Directed Isoperimetric Theorems.} One of the key tools for the upper bound of \Thm{intro-ub} is
a \emph{directed isoperimetric theorem} for real-valued functions. The problem of property testing monotonicity
of functions over $\{-1,1\}^n$ has seen a rich history of results. 
Much of the progress is due to a connection with directed isoperimetric inequalities~\cite{GGLRS00, ChSe13-j, KMS15, BCS23, BKKM23, F23, BKR23, F24}. 
Directed isoperimetry is the phenomenon that a function that is far from monotone must exhibit evidence via a directed boundary (points where the function value decreases)
The directed isoperimetric inequalities ensure that various functionals of the directed boundary are large whenever the function is far from monotone.
Our proof of \Thm{intro-ub} shows an interesting use of directed isoperimetry for distribution testing.

We need to prove a new directed isoperimetric theorem, which is of independent interest. 
We state some notation to formally state this theorem.

Let $f:\{-1,1\}^n \to [0,1]$ be a function defined on the $n$-dimensional hypercube. The $L_1$-distance 
of $f$ from monotonicity is defined as
\begin{equation*}
	\dist_1(f) \eqdef \min_{g~:~\text{monotone}} ~~\Ex_{\bx \sim \{-1,1\}^n}\left[ |f(\bx) - g(\bx)| \right]
\end{equation*}
where the expectation is over the uniform distribution over $\{-1,1\}^n$.
For a point $x\in \{-1,1\}^n$, define the directed derivative $\grad^-f(x)$ to be the $n$-dimensional vector defined as 
\begin{equation}\label{eq:defgrad}
	\left(\grad^-f(x)\right)_i \eqdef \begin{cases}
		0 & \textrm{if $x_i = 1$} \\
		\left(f(x) - f(x+2e_i)\right)^+ & \text{otherwise}
	\end{cases}
\end{equation}
where $(z)^+$ is a shorthand for $\max(z,0)$. For Boolean-valued $f:\{-1,1\}^n \to \{0,1\}$, the distance $\dist_1(f)$ corresponds to the ``normal'' Hamming distance notion, $\dist_0(f)$.
Based on isoperimetric theorems of Talagrand~\cite{Tal93}, the quantity $\Exp_{\bx} \norm{\grad^-f (\bx)}_2$ can be thought
of as a ``directed surface area" for the function $f$. A deep isoperimetric theorem of Khot, Minzer, and Safra~\cite{KMS15} (see, also~\cite{PRW22}, who showed how to remove the final logarithmic factor)
lower bounds this surface area by the distance to monotonicity.

\begin{theorem}[\cite{KMS15, PRW22}]\label{thm:booliso}
	There exists a universal constant $C > 0$ such that for every $f \colon \{-1,1\}^n \to \{0,1\}$,
$\Exp_{\bx} \norm{\grad^-f (\bx)}_2 \geq C\cdot \dist_0(f)$.
\end{theorem}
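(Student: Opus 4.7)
The plan is to deduce the real-valued inequality from the Boolean KMS theorem via the thresholding/layer-cake technique of Berman--Raskhodnikova--Yaroslavstev. For $r \in \R$, let $f^{(r)} \colon \{-1,1\}^n \to \{0,1\}$ be the threshold function $f^{(r)}(x) = \mathbb{1}[f(x) \geq r]$. A standard layer-cake calculation, integrating the $L^1$-closest monotone Boolean approximants $g^{(r)}$ of each $f^{(r)}$, yields a monotone real-valued function $g$ satisfying $\Ex_x|f(x)-g(x)| \leq \int_{\R}\dist_0(f^{(r)})\, dr$, so that
\[
\dist_1(f) \;\leq\; \int_{\R} \dist_0(f^{(r)})\, dr.
\]
Now apply the Boolean KMS inequality to each $f^{(r)}$: writing $N(r;x) = |\{i : x_i=-1,\ f(x) \geq r > f(x^{(i)})\}|$, we get $\Ex_x \sqrt{N(r;x)} \geq C \cdot \dist_0(f^{(r)})$. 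Integrating over $r$ and applying Fubini gives
\[
\Ex_x \int_{\R} \sqrt{N(r;x)}\, dr \;\geq\; C\cdot \dist_1(f).
\]

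The remaining task, which is the main technical step, is to bound the inner integral by the quantity appearing in the theorem with only an $O(\sqrt{\log n})$ loss. Fix $x$. The crucial observation is that, over violating coordinates $i$ (those with $x_i=-1$ and $f(x) > f(x^{(i)})$), the intervals $I_i = (f(x^{(i)}), f(x)]$ all share the common right endpoint $f(x)$, and are therefore totally nested. Sort the lengths as $a_1 \geq a_2 \geq \cdots \geq a_K$, where $K \leq n$ and the $a_k$ are the sorted values of $(f(x)-f(x^{(i)}))^+$. Then $N(r;x) = k$ precisely on an interval of measure $a_k - a_{k+1}$ (with $a_{K+1}=0$), and Abel summation gives
\[
\int_{\R} \sqrt{N(r;x)}\, dr \;=\; \sum_{k=1}^{K} \sqrt{k}\,(a_k - a_{k+1}) \;=\; \sum_{k=1}^{K} \bigl(\sqrt{k}-\sqrt{k-1}\bigr) a_k \;\leq\; \sum_{k=1}^{K} \frac{a_k}{\sqrt{k}}.
\]
By Cauchy--Schwarz the last quantity is at most $\sqrt{\sum_{k=1}^{K} 1/k}\cdot \sqrt{\sum_{k} a_k^2} = O(\sqrt{\log n}) \cdot \sqrt{\sum_{i:x_i=-1}\bigl((f(x)-f(x^{(i)}))^+\bigr)^2}$.

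Combining the two displays yields the desired inequality. The hardest step is really the one-line but easy-to-overlook structural observation that the violating intervals at a point are nested with a common right endpoint; once that is in hand, the Abel-summation plus Cauchy--Schwarz calculation above is what produces exactly the $1/\sqrt{\log n}$ factor (via $\sum_{k\leq K} 1/k = O(\log n)$). A minor technical point is checking that the layer-cake construction of $g$ produces a legitimate monotone real-valued function with $\Ex|f-g|$ bounded as claimed; this works uniformly for $f \colon \{-1,1\}^n \to \R$ since the effective range of integration in $r$ is $[\min f, \max f]$.
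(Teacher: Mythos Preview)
You have proved the wrong theorem. The statement in question is the \emph{Boolean} directed isoperimetric inequality for $f\colon\{-1,1\}^n\to\{0,1\}$ (Theorem~\ref{thm:booliso}), which the paper does not prove at all: it is quoted from \cite{KMS18,PRW22} and used as a black box. Your proposal is instead a proof of the \emph{real-valued} extension (Theorem~\ref{thm:l1-talagrand}), and it explicitly invokes the Boolean KMS inequality (``apply the Boolean KMS inequality to each $f^{(r)}$'') as a step. As a proof of the stated theorem this is circular.

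If your intention was to prove Theorem~\ref{thm:l1-talagrand}, then your argument is correct and essentially identical to the paper's: the paper also thresholds $f$ into Boolean functions $f_t$, applies the Boolean KMS bound to each, and at a fixed $x$ sorts the violating drops $a_1\ge\cdots\ge a_d$, computes $\int\sqrt{N(t;x)}\,dt=\sum_k(\sqrt{k}-\sqrt{k-1})a_k$ via summation by parts, and bounds this by $O(\sqrt{\log n})\cdot\|\nabla^-f(x)\|_2$ via Cauchy--Schwarz and $\sum_{k\le d}1/k=O(\log d)$. The only cosmetic difference is that the paper cites the layer-cake identity $\dist_1(f)=\int_0^1\dist_0(f_t)\,dt$ directly from \cite{BeRaYa14} (after rescaling $f$ into $[0,1]$), rather than reconstructing a monotone $g$ by hand.
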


We prove a real-valued generalization of this statement, with a $\sqrt{\log n}$ loss in the bound.
\begin{theorem}\label{thm:l1-talagrand}
	For any $f:\{-1,1\}^n \to [0,1]$, 
 \[
	\Ex_{\bx \sim \{-1,1\}^n}\left[  \norm{\grad^-f (\bx)}_2 \right] \geq \Omega\left(\frac{1}{\sqrt{\log n}}\right)\cdot \dist_1(f).
	\]
\end{theorem}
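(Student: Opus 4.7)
The plan is a thresholding reduction to the Boolean case, followed by a Cauchy--Schwarz step that costs $\sqrt{\log n}$. By shift- and scale-invariance of the inequality (both sides are unchanged under $f\mapsto f+c$ and scale linearly under $f\mapsto\lambda f$), assume WLOG that $f\colon\{-1,1\}^n\to[0,1]$. For $t\in[0,1]$, let $f_t(x):=\mathbf{1}[f(x)\geq t]$, and write $\dist_0(f_t)$ for its Hamming distance to the closest monotone Boolean function. Applying the layer-cake identity $|a-b|=\int_0^1|\mathbf{1}[a\geq t]-\mathbf{1}[b\geq t]|\,dt$ to any monotone $g\in[0,1]$ gives $\Ex|f-g|=\int_0^1\Ex|f_t-g_t|\,dt\geq\int_0^1\dist_0(f_t)\,dt$; the reverse direction follows by choosing optimal Boolean monotone approximants $g^t$ to each $f_t$ and setting $g(x):=\int_0^1 g^t(x)\,dt$, which is a monotone $[0,1]$-valued function satisfying $\Ex|f-g|\leq\int_0^1\dist_0(f_t)\,dt$. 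This yields the Berman--Raskhodnikova--Yaroslavtsev~\cite{BeRaYa14} identity
\[
\dist_1(f)\;=\;\int_0^1\dist_0(f_t)\,dt.
\]

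Applying the Boolean directed Talagrand inequality of Khot--Minzer--Safra~\cite{KMS18} (see also~\cite{PRW22}) to each $f_t$ and integrating over $t$ gives
\[
\int_0^1 \Ex_{\bx}\sqrt{|D_t(\bx)|}\,dt\;\geq\;C\cdot\dist_1(f),\qquad D_t(x):=\{i:x_i=-1,\,f(x^{(i)})<t\leq f(x)\}.
\]
It remains to prove the pointwise bound: for every $x\in\{-1,1\}^n$,
\[
\int_0^1\sqrt{|D_t(x)|}\,dt\;\leq\;\sqrt{1+\ln n}\cdot\Bigl(\sum_{i:x_i=-1}\bigl((f(x)-f(x^{(i)}))^+\bigr)^2\Bigr)^{1/2}.
\]

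Fix $x$, set $v_i:=(f(x)-f(x^{(i)}))^+\cdot\mathbf{1}[x_i=-1]$, and sort the positive values as $v_{(1)}\geq v_{(2)}\geq\cdots\geq v_{(m)}>0$. Since $|D_t(x)|\geq k$ iff $t>f(x)-v_{(k)}$, the step function $t\mapsto\sqrt{|D_t(x)|}$ jumps from $\sqrt{k-1}$ to $\sqrt{k}$ at $t=f(x)-v_{(k)}$, and Abel summation yields
\[
\int_0^1\sqrt{|D_t(x)|}\,dt=\sum_{k=1}^m\bigl(\sqrt{k}-\sqrt{k-1}\bigr)v_{(k)}\;\leq\;\sum_{k=1}^m\frac{v_{(k)}}{\sqrt{k}}\;\leq\;\Bigl(\sum_{k=1}^m\tfrac{1}{k}\Bigr)^{1/2}\Bigl(\sum_{k=1}^m v_{(k)}^2\Bigr)^{1/2},
\]
using $\sqrt{k}-\sqrt{k-1}=1/(\sqrt{k}+\sqrt{k-1})\leq 1/\sqrt{k}$ and Cauchy--Schwarz. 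Bounding $\sum_{k=1}^m 1/k\leq 1+\ln n$ completes the pointwise claim; taking expectations over $\bx$ and combining with the previous display yields $\Ex_{\bx}\bigl(\sum_{i:\bx_i=-1}v_i^2\bigr)^{1/2}\geq\Omega(1/\sqrt{\log n})\cdot\dist_1(f)$, as required.

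The principal obstacle is exactly this square-root-inside-the-integral interchange: the Cauchy--Schwarz step above loses $\sqrt{\log n}$, and the loss seems intrinsic to going through BRY thresholding. Any improvement in the $n$-dependence would likely require a direct real-valued Talagrand-type argument that avoids thresholding --- an open question flagged in the introduction.
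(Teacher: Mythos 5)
Your proposal is correct and follows essentially the same route as the paper's proof: reduce to $[0,1]$-valued $f$, apply the Berman--Raskhodnikova--Yaroslavtsev thresholding identity $\dist_1(f)=\int_0^1\dist_0(f_t)\,dt$, invoke the Boolean directed Talagrand inequality of Khot--Minzer--Safra at each level $t$, and then prove the pointwise bound $\int_0^1\norm{\grad^- f_t(x)}_2\,dt\leq O(\sqrt{\log n})\,\norm{\grad^- f(x)}_2$ by sorting the directed differences, using Abel summation, the bound $\sqrt{k}-\sqrt{k-1}\leq 1/\sqrt{k}$, and Cauchy--Schwarz against $\sum_k 1/k=O(\log n)$. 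This is exactly the paper's argument, with only cosmetic differences in presentation.
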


The proof of this theorem is obtained by a reduction to the Boolean case of \Thm{booliso}, using a thresholding
technique of Berman-Raskhodnikova-Yaroslavtsev~\cite{BeRaYa14}. 
We note that this theorem answers an open question in~\cite{F23} 
(the question mark in Table 1, which asks for an $(L_1, \ell_2)$-Poinc\'{a}re theorem)).
An interesting open problem is to remove
the dependence on $n$ from \Thm{l1-talagrand}.}

\subsection{Main Ideas} \label{sec:ideas}
We give a short summary of the main ideas behind our results. 

\noindent\textbf{Monotonicity Testing Upper Bound.} The algorithm behind \Thm{intro-ub} is an ``edge tester" for distribution testing. We take a sample $\bx \sim p$ and choose a direction $\bi \sim [n]$ uniformly at random, and we consider the distribution $p$ conditioned on $\smash{\{\bx, \bx^{(\bi)}\}}$, or equivalently, on the 1-dimensional subcube $\brho$ which is $\bx$ except for $\brho_{\bi} = *$. Notice that the conditional distribution is supported on one bit $\{-1,1\}$. For our algorithm, it suffices to condition on 1-dimensional subcubes, which have been studied under the name \emph{coordinate oracles}~\cite{BCSV23}. In a monotone distribution, $1$ is more likely to appear than $-1$, so if the algorithm can detect (by few independent samples from the subcube) that the probability of $-1$ is larger than that of $1$, it can safely output ``reject'' (meaning non-monotone).
The challenge is understanding, when $p$ is $\eps$-far from monotone, how likely it is that an ``edge'' $\smash{\{ \bx, \bx^{(i)}\}}$ is biased toward $-1$, and is this bias detectable from few samples. 

The connection to directed isoperimetry then becomes clear:
considering the case when $p$ is \mbox{$\eps$-far} from monotone, we apply \Thm{l1-talagrand} on the probability mass function $p$, where $\dist_1(p)$ can be~shown to be $\Omega(\eps)$ (Corollary~\ref{cor:l1-tal}). Then, the left-hand side of \Thm{l1-talagrand} can be used to establish (via an averaging argument and an ``importance sampling'' trick) how large the bias toward $-1$ will be on a random draw of $\bx \sim p$ and $\bi \sim [n]$ (in the proof of Lemma~\ref{lem:far-case-reject}). We emphasize that, even though we seek lower bounds on the biases of individual edges, which a (simpler) directed Poincar\'{e} inequality would seemingly handle (namely, Theorem~1.3 in~\cite{F23}), such an argument only gives a weaker $O(n^2/\eps^2)$ complexity. The reason is the following: directed Poincar\'{e} lower bounds the sum of $(p(x) - p(x^{(i)}))^+$ across all edges $\{x, x^{(i)}\}$, and would imply an edge-wise bias of $\eps / n$ (since there are $n 2^n$ edges); however, the query complexity of detecting bias is inverse {\em quadratic} in the bias. On the other hand, \Thm{l1-talagrand} rules out such situations: if all edges had negative bias $\eps / n$, $p$ would be $\tilde{O}(\eps / \sqrt{n})$-close to monotone. 

\ignore{The negative
biased edgs are like a measure of ``surface area". The problem was basic isoperimetric theorems, like the Poincar\'{e}
inequality, is that they only bound the total sum of negative biases over all edges. The query complexity
is inverse \emph{quadratic} in the biases, so for an optimal complexity bound, we need to understand how
the negative bias is spread over the edges. We would prefer if this biased is concentrated on a smaller set 
of edges. Remarkably, the directed Talagrand inequality of \Thm{l1-talagrand} precisely asserts that. If the total
negative bias is small (close to minimum possible for $\eps$-far distributions), then this bias is concentrated
on a small set of edges. This ensure that, on sampling such an edge, the bias can be determined quickly.
Combining with a logarithmic search over bias values, one can prove that the edge tester works in $\tilde{O}(n/\eps^2)$ samples.}

\noindent\textbf{Monotonicity Testing Lower Bound.} Our starting point is to focus on proving sample-complexity lower bounds for testing the analogous problems (namely, monotonicity and uniformity of monotone distribution) in the restricted setting of \emph{product distributions}. This is a significant restriction from general distributions\footnote{Product distributions over $\{-1,1\}^n$ are fully specified by their mean vector, so in order to describe a product distribution, it suffices to specify the $n$ values of $\mathbb{E}_{\bx \sim p}[\bx_i]$. General distributions over $\{-1,1\}^n$, on the other hand, lie in the convex hull of $\{ e_i : i \in [2^n]\}$ and require $2^{n}-1$ numbers to specify.}, and these immediately imply general query lower bounds for subcube conditioning (Lemma~\ref{lem:tree-to-iid}). In particular, conditioning on subsets of coordinates (i.e., subcubes) do not change the distribution at all if coordinates were independent to begin with. The surprising aspect is that these lower bounds on product distributions will turn out to be nearly optimal (this was also the case in~\cite{CCKLW21, CJLW21b}).

Let us first focus on the monotonicity testing lower bound (\Thm{intro-lb}), where we use Yao's minimax
principle. Since a product distribution is fully specified by its mean vector, we construct a pair of distributions $\calA$ and $\calB$ supported on $[-1,1]$; and let the mean vector (and hence product distribution) have, for each $i \in [n]$, the $i$-th coordinate given by the $i$-th draw from $\calA$ or $\calB$ (Lemma~\ref{lem:one-dim-dist}). $\calA$ will always output non-negative numbers (so the corresponding product distribution will always be monotone), while $\calB$ will be $-\eps / \sqrt{n}$ with constant probability (so the corresponding product distribution will be $\Omega(\eps)$-far from monotone). The key trick is to design $\calA$ and $\calB$ in such a way so as to have their first $\log n / \log \log n$ moments being identical; in Section~\ref{sec:indist} we show how it implies that $\tilde{\Omega}(n/\eps^2)$ samples are needed to distinguish product distributions they produce.
The matching moments technique for lower bounds has been used before~\cite{RRSS09, V11}. Most recently, it was used to prove lower bounds for subcube conditioning for testing and learning $k$-juntas~\cite{CJLW21b}.


\noindent\textbf{Uniformity of Monotone Distributions.} The lower bound on testing uniformity of monotone distributions also proceeds by Yao's minimax principle. We note that, the fact that testing uniformity of product distributions required $\Omega(\sqrt{n}/\eps^2)$ samples was known~\cite{CDKS17}; however, the examples to obtain the lower bound were non-monotone (each coordinate behaves independently with bias set to $\pm \eps / \sqrt{n}$). In our construction, each of the $n$ coordinates is biased with probability $1/\sqrt{n}$, but its bias becomes $\eps / n^{1/4}$ (Section~\ref{sec:dno-def}). Note that all biases are positive, so the resulting distribution is monotone. 
%
In order to prove indistinguishability, we reduce to the corresponding problem under Gaussian distributions (instead of bits). The calculation
boils down to the behavior of sum of exponentials of Gaussians, which can be determined by the (closed form) expression for the moment generating function (Section~\ref{sec:proof-mono-lb}).

\subsection{Related Work}\label{sec:related}

As mentioned above, directed isoperimetry theorems have been  crucial to the development of testers for monotonicity of Boolean functions~\cite{GGLRS00,ChSe13-j,KMS18} (as opposed to monotonicity of distributions). In particular, as we illustrate in~\Thm{booliso}, the strongest known directed isoperimetry theorem for Boolean functions due to~\cite{KMS18} (and slightly improved by~\cite{PRW22}) relates the expected $\norm{\grad^-f(x)}_2$ to the $\ell_0$-distance of $f$ from monotonicity, that is, the fraction of domain points at which $f$ must be modified to make it a monotone function. Our result,~\Thm{l1-talagrand}, is an ``$\ell_1$-version'' of the above statement for {\em real-valued} functions over the Boolean hypercube. 

The most relevant works to \Thm{l1-talagrand} are the directed isoperimetry theorems initiated by Pinto Jr.~\cite{F23, F24} who considers smooth functions $f:[0,1]^n \to \RR$. In~\cite{F23}, Pinto Jr. looks at the $\ell_1$-geometry and proves under a certain $\ell_1$-smoothness condition, the expected $\ell_1$-norm of the gradient is at least the $\ell_1$-distance of $f$ from monotonicity. In the subsequent paper~\cite{F24}, Pinto Jr. assumes $\ell_2$-smoothness and proves that the expected $\ell_2^2$-norm of the gradient is at least the square of the $\ell_2$-distance. Neither of these results imply or are implied by the Boolean setting of~\cite{KMS18} or our result,~\Thm{l1-talagrand}. As mentioned earlier, our result,~\Thm{l1-talagrand}, answers a question left open in~\cite{F23}.
Using the notation of that paper, we prove an $(L^1, \ell^2)$-Poinc\'{a}re theorem
for real-valued functions on the hypercube.

 In~\cite{BKR23}, Black, Kalemaj and Raskhodnikova consider Boolean functions $f:\{-1,1\}^n \to \mathbb{R}$ and they prove that the isoperimetry result of~\cite{KMS18} generalizes for such functions in the following sense. Instead of looking at the $\ell_2$-norm of the (directed) gradient $\grad^- f(x)$,~\cite{BKR23} considers the square-root of the ``negative influence'' at each $x$, where the ``negative influence'' counts the number of pairs which form a monotonicity violation with $x$. The {\em magnitude} of the violation is ignored. 
In this setting,~\cite{BKR23} proves that if a real-valued function is $\eps$-far from being monotone in the $\ell_0$-sense (which is usual in property testing), then the expected square-root of the negative influence is $\Omega(\eps)$ thereby generalizing~\cite{KMS18}. The authors use this result to give an $O(r\sqrt{d}/\eps^2)$-query non-adaptive tester for real-valued monotone functions, where $r$ is the cardinality of the image of $f$. Our directed isoperimetry result seems unrelated to their result, apart from the fact that both of our results are proved by reducing it to the Boolean case. 
Finally, in~\cite{BCS23}, Black, Chakrabarty and Seshadhri generalize the directed isoperimetry theorem of~\cite{KMS18} to Boolean functions defined over the {\em hypergrid}. 

\paragraph{Monotonicity Testing of Distributions.} Monotonicity of distributions has been studied extensively in the literature, in both low-dimensional and high-dimensional regimes~\cite{BKR04, RS09, ACS10, BFRV11,AGPRY19,RV20}. Batu, Kumar, and Rubinfeld initiated the study in~\cite{BKR04} and considered both regimes above. They described a 
tester for one-dimensional distributions (total orders) using $\tilde{O}_\eps(\sqrt{n})$-samples, and via a reduction to uniformity testing proved a tightness of this result. They also proved a $\Omega(m^{n/2})$-lower bound for distributions over $[m]^n$, and described algorithms with $\tilde{O}(m^{n - 0.5})$-samples. The one-dimensional result's dependency on $\eps$ was improved by~\cite{CDGR18} and the optimal algorithm for the low-dimensional regime was given by Acharya, Daskalakis and Kamath~\cite{ADK15} who gave a tester with sample-complexity $O\left(\frac{m^{n/2}}{\eps^2} + \frac{1}{\eps^2}\left(\frac{n\log m}{\eps^2}\right)^n\right)$. For the high-dimensional regime (which is of interest of this paper) of distributions over the hypercube $\{-1,+1\}^n$, one can get stronger lower bounds than ones found by reduction to uniformity testing: Aliakbarpour, Gouleakis, Peebles, Rubinfeld and Yodpinayee~\cite{AGPRY19} prove a lower bound of $2^{(1 - \Theta(\sqrt{\eps}) - o(1))n}$ on the sample complexity. The best upper bound is currently at $\smash{2^{n} / 2^{\Theta_{\eps}(n^{1/5})}}$ samples due to Rubinfeld and Vasilyan~\cite{RV20}.

\paragraph{Distribution Testing Beyond Sample-Complexity.} Many distribution testing problems over high-dimensional domains incur sample complexities which are exponential in the dimension. As a result, various works have sought models and techniques to overcome these lower bounds, which can be divided between those which assume structure on the input, and those which provide stronger access. Works assuming additional structure on the input include monotonicity~\cite{RS09}, low-degree Bayesian networks~\cite{CDKS17, DP17, ABDK18, DKP23}, Markov random fields~\cite{GLP18, DDK19, BBCSV20}, ``flat'' histogram structure~\cite{DKP19}, or structured truncations~\cite{DNS23, DLNS24}. On the other hand, the subcube conditional model follows the other approach on assuming stronger access.~\cite{BC18} was the first to obtain polynomial query complexities for various testing problems; for uniformity testing over $\{-1,1\}^n$,~\cite{CCKLW21} showed the complexity is $\tilde{\Theta}(\sqrt{n}/\eps^2)$~and~\cite{CM24} extended it to hypergrids. In this work, we use an approach of~\cite{CJLW21b} which studied subcube conditioning for testing and learning $k$-junta distributions (those which have at most $k$ non-uniform variables). Other accesses include (unrestricted) conditioning on the domain~\cite{CRS15,CFGM16} (see also, improvements~\cite{FJOPS15, ACK15, KT18, N21, CCK24, CCKM24}),  queries to the probability density function or cumulative distribution function~\cite{BDKR05, CR14}, conditioning on prefixes for hidden Markov models~\cite{MKKZ23}, and samples which reveal their probability~\cite{OS18}. 


\section{Testing Monotonicity}

In this section, we show that using the directed and real-valued version of Talagrand's inequality, we may design an ``edge tester'' for testing monotonicity of distributions using subcube conditioning. In particular, we give the following theorem.
\begin{theorem}\label{thm:mon-ub}
    There exists an algorithm that receives as input subcube conditioning access to an unknown distribution $p$ supported on $\{-1,1\}^n$, as well as an accuracy parameter $\eps$. The algorithm makes $\tilde{O}(n/\eps^2)$ subcube conditioning queries and satisfies the following guarantees:
    \begin{itemize}
        \item If $p$ is monotone, the algorithm outputs ``accept'' with probability at least $0.9$.
        \item If $p$ is $\eps$-far from monotone, the algorithm outputs ``reject'' with probability at least $0.9$.
    \end{itemize}
\end{theorem}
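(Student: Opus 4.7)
The plan is to analyze an \emph{edge tester} operating in the (weaker) coordinate-oracle model. The elementary subroutine samples $\bx \sim p$ and a uniform direction $\bi \sim [n]$ independently, then uses $O(1/\tau^2)$ fresh samples from the one-dimensional subcube $\brho$ (which agrees with $\bx$ outside of $\bi$) to estimate the conditional probability $q_{-1}$ that the free coordinate equals $-1$; it rejects when $q_{-1} > 1/2 + \tau$. The full algorithm wraps this subroutine in a logarithmic search over dyadic thresholds $\tau \in \{2^{-k}\}_{k=0}^{O(\log n)}$, spending $\tilde{O}(n/\eps^2)$ queries per threshold. Completeness is immediate: if $p$ is monotone, $p(x_{-i},-1) \leq p(x_{-i},+1)$ on every edge, so $q_{-1} \leq 1/2$ deterministically, and Chernoff plus a union bound over all subroutine calls ensures acceptance with probability $\geq 0.9$.

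The heart of the argument is soundness, via \Thm{l1-talagrand}. Suppose $p$ is $\eps$-far from monotone. Viewing the mass function $p$ as a real-valued function on $\{-1,1\}^n$ and using the standard correspondence $\Ex_{\bx \sim U}[|p(\bx) - q(\bx)|] = 2^{1-n}\,\dtv(p,q)$ (Corollary~\ref{cor:l1-tal}), we obtain $\dist_1(p) \geq \Omega(\eps \cdot 2^{-n})$; clearing the uniform normalization in \Thm{l1-talagrand} yields
\[
\sum_{a \in \{-1,1\}^n}\!\left(\sum_{i\,:\,a_i=-1}\!\bigl((p(a) - p(a^{(i)}))^{+}\bigr)^{2}\right)^{\!1/2}\;\geq\;\Omega\!\left(\frac{\eps}{\sqrt{\log n}}\right)\!.
\]
For an edge $e = \{a,b\}$ with $a_i = -1$, let $\beta(e) := (p(a) - p(b))^{+}/(p(a)+p(b))$ be the bias toward $-1$; then $q_{-1} = 1/2 + \beta(e)/2$ on the conditioned subcube whenever $p(a) \geq p(b)$. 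Sampling $\bx \sim p$ and $\bi \sim [n]$ hits edge $e$ with probability $(p(a)+p(b))/n$, so I would combine the Talagrand bound with this ``importance sampling'' change of measure and a dyadic bucketing of edges by $\beta$-value to extract some scale $\tau \geq \tilde{\Omega}(\eps/\sqrt{n})$ for which
\[
\Pr_{\bx \sim p,\,\bi \sim [n]}\!\bigl[\beta(\{\bx, \bx^{(\bi)}\}) \geq \tau\bigr]\;\geq\;\tilde{\Omega}\!\left(\frac{\eps^{2}}{n\,\tau^{2}}\right)\!.
\]
Given such a $\tau$, drawing $\tilde{O}(n\tau^2/\eps^2)$ edges hits a $\tau$-biased one with constant probability, and the inner $O(1/\tau^2)$ Bernoulli samples let Chernoff detect it; summing over the logarithmic search yields $\tilde{O}(n/\eps^2)$ queries in total.

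The main obstacle is proving the bucketing bound above. A simpler directed Poincar\'{e} inequality (e.g., Theorem~1.3 of~\cite{F23}) only controls the $\ell_1$-sum of negative gradients by $\Omega(\eps/\sqrt{\log n})$, which in the worst case distributes as a uniform per-edge bias of only $\Theta(\eps/n)$ across the $n\cdot 2^{n-1}$ edges; since Bernoulli-bias estimation scales \emph{inverse-quadratically} in the bias, that pathway would yield only the weaker $O(n^2/\eps^2)$ bound. The stronger $\ell_2$ inner structure in \Thm{l1-talagrand} precisely rules out this pathological uniform spreading: if every gradient entry were of order $\eps/n$, the inner $\ell_2$ norms would collapse by a $\sqrt{n}$ factor relative to $\ell_1$, contradicting Talagrand. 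Making this quantitative via dyadic bucketing and Cauchy--Schwarz, while carefully tracking the $\sqrt{\log n}$ loss from \Thm{l1-talagrand} and the $\log n$ factor from the threshold search, is the main technical burden; everything else reduces to standard Chernoff concentration.
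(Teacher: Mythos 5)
Your proposal is the paper's proof: the same edge tester in the coordinate-oracle model, the same invocation of Corollary~\ref{cor:l1-tal} to pass from TV-distance to $\dist_1$, the same importance-sampling reformulation (an edge $\{a,b\}$ is hit by a draw $\bx\sim p,\ \bi\sim[n]$ with probability $(p(a)+p(b))/n$, which converts the $2^{-n}$-weighted Talagrand sum into an expectation under $\bx\sim p$), and the same logarithmic threshold search with $O(1/\tau^2)$ Bernoulli samples per trial. The bucketing statement you isolate as the remaining burden is exactly Lemma~\ref{lem:far-case-reject}; the paper proves it by a two-level dyadic split (first over the $\ell_2$-norm $S(\bx)=\bigl(\sum_{i:\bx_i=-1}\beta_i(\bx)^2\bigr)^{1/2}$ to choose $\gamma$, then over coordinates $i$ at a fixed good $\bx$ to choose $\ell$), whereas your one-level bucketing of edges by $\beta$-value also closes it once one applies $\sqrt{\textstyle\sum_k a_k}\le\textstyle\sum_k\sqrt{a_k}$ followed by Jensen, at the cost of the same polylogarithmic factors.
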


The algorithm referred to in Theorem~\ref{thm:mon-ub} is given in Figure~\ref{fig:alg}.
We break up the proof of \Thm{mon-ub}
into a few parts. First, we argue about the running time.

\begin{figure}
\begin{framed}

\textbf{Algorithm for Testing Monotonicity of Distributions}. We receive as input subcube conditioning access to an unknown distribution $p$ which is supported on $\{-1,1\}^n$. Furthermore, we receive the accuracy parameter $\eps \in (0,1)$. We let $c_0$ denote a sufficiently small constant. 
\begin{enumerate}
    \item For all integers $w\geq0$ such that $2^w = \tilde{O}(n/\eps^2)$, repeat the following $t = O(2^{w} \log(n/\eps))$ times:
    \begin{itemize}
        \item Sample $\bx \sim p$ and $\bi \sim [n]$, and consider the restriction $\brho \in \{-1,1,*\}^n$ given by $\brho_j = \bx_j$ if $j \neq \bi$, and $\brho_{\bi} = *$. 
        \item Let $\eta = c_0^2\eps^2 \cdot 2^{w} / (16n \cdot \log(n/\eps) \cdot \log n)$ and take $m = O(\log (n/\eps)/\eta)$ subcube conditioning queries with restriction $\brho$ while counting the number of $1$'s and $-1$'s in coordinate $\bi$ observed. If the number of $-1$'s observed is larger than $m\left(1/2 + \sqrt{\eta}/2\right)$, output ``reject.''
    \end{itemize}
    \item If the algorithm has not rejected, output ``accept.''
\end{enumerate}

\end{framed}
\caption{Algorithm for Testing Monotonicity of Distributions} \label{fig:alg}
\end{figure}

\begin{claim}
    The query complexity is $\tilde{O}(n / \eps^2)$.
\end{claim}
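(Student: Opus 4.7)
The plan is purely a bookkeeping calculation: bound the three nested sources of queries (the outer loop over $w$, the repetition count $t$, and the per-repetition sample size $m$) and observe a key cancellation of the $2^w$ factor.

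First I would count the number of values of $w$ appearing in the outer loop. Since we range over nonnegative integers $w$ with $2^w = \tilde{O}(n/\eps^2)$, there are $O(\log(n/\eps))$ such values; this contributes at most a polylogarithmic overhead and so can be absorbed into the $\tilde{O}(\cdot)$.

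Next I would fix a single value of $w$ and compute the total number of queries it produces. Substituting the definition of $\eta$ into $m = O(\log(n/\eps)/\eta)$ gives
\[
m = O\!\left(\frac{n\,\log(n/\eps)\,\log n \cdot \log(n/\eps)}{c_0^2\,\eps^2\cdot 2^w}\right) = \tilde{O}\!\left(\frac{n}{\eps^2\cdot 2^w}\right).
\]
Multiplying by $t = O(2^w \log(n/\eps))$ yields $t\cdot m = \tilde{O}(n/\eps^2)$; crucially, the $2^w$ factor in $t$ cancels the $2^w$ in the denominator of $m$ (this is exactly why $\eta$ was chosen proportional to $2^w$).

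Finally, summing this $\tilde{O}(n/\eps^2)$ bound over the $O(\log(n/\eps))$ values of $w$ gives a total query complexity of $\tilde{O}(n/\eps^2)$, as claimed. There is no real obstacle here; the only thing to be careful about is not losing a $\log n$ or $\log(n/\eps)$ factor while chaining the substitutions, and these are precisely the factors hidden by the $\tilde{O}$ notation.
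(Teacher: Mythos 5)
Your proposal is correct and matches the paper's proof essentially step for step: both substitute the definition of $\eta$ into $m$, observe the cancellation of $2^w$ in the product $t\cdot m$, and sum over the $O(\log(n/\eps))$ values of $w$. There is nothing to add.
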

\begin{proof}
    We simply upper bound the query complexity by inspecting Figure~\ref{fig:alg}. We have that (disregarding constant factors) the query complexity is the sum over all integers $w \geq 0$ such that $2^{w} = \tilde{O}(n/\eps^2)$ of 
    \[ O(2^{w} \cdot \log(n/\eps)) \cdot O\left(\dfrac{n\cdot \log^2(n/\eps) \cdot \log n}{c_0^2 \eps^2 \cdot 2^{w}} \right) = \tilde{O}(n / \eps^2). \]
    There are $O(\log(n/\eps))$ such settings of $w$, so the total complexity is still $\tilde{O}(n / \eps^2)$.
\end{proof}

\begin{lemma}
    Whenever $p$ is monotone, the algorithm outputs ``accept'' with probability at least $0.9$.
\end{lemma}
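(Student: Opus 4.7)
The plan is to show that each single iteration of the inner loop in Figure~\ref{fig:alg} rejects with only tiny probability whenever $p$ is monotone, and then to take a union bound over all $\tilde O(n/\eps^2)$ iterations. The entire argument reduces to the fact that, in a monotone distribution, conditioning on any one-dimensional subcube produces a biased coin whose probability of outputting $-1$ is at most $1/2$.

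Concretely, consider a fixed iteration with some value of $w$ and a random draw $(\bx,\bi)$ which yields the one-dimensional restriction $\brho$. Let $x^{-}$ and $x^{+}$ be the two completions of $\brho$ (with $x^{-}_{\bi}=-1$ and $x^{+}_{\bi}=+1$). Since $x^{-}\preceq x^{+}$, monotonicity of $p$ gives $p(x^{-})\le p(x^{+})$, so
\[
q := \Pr_{\by\sim p(\cdot \mid \brho)}[\by_{\bi}=-1] \;=\; \frac{p(x^{-})}{p(x^{-})+p(x^{+})} \;\le\; \tfrac{1}{2}.
\]
The $m$ subcube conditioning queries produce i.i.d.\ Bernoulli$(q)$ draws for the event $\{\by_{\bi}=-1\}$. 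By Hoeffding's inequality, the probability that their empirical mean exceeds $\tfrac{1}{2}+\tfrac{\sqrt{\eta}}{2} \ge q+\tfrac{\sqrt{\eta}}{2}$ is at most $\exp(-m\eta/2)$.

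Plugging in $m = \Theta(\log(n/\eps)/\eta)$ with a large enough implicit constant, this per-iteration rejection probability is at most $(n/\eps)^{-C}$ for any desired constant $C$. Summing the iteration counts across the $O(\log(n/\eps))$ geometric values of $w$ with $2^{w}=\tilde O(n/\eps^{2})$, the total number of iterations is at most $\sum_{w}O(2^{w}\log(n/\eps)) = \tilde O(n/\eps^{2})$. A union bound over all iterations therefore bounds the overall probability of rejection by $\tilde O(n/\eps^{2})\cdot (n/\eps)^{-C}\le 0.1$ for $C$ sufficiently large.

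There is no real obstacle here; the only care needed is to choose the constant inside $m = \Theta(\log(n/\eps)/\eta)$ large enough that the per-iteration bound $(n/\eps)^{-C}$ beats the $\tilde O(n/\eps^{2})$ factor from the union bound. The key conceptual content is simply the monotonicity-implies-$q\le 1/2$ observation, which is what justifies the deviation threshold $1/2+\sqrt{\eta}/2$ used in Figure~\ref{fig:alg}.
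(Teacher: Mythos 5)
Your argument is correct and follows exactly the paper's proof: both observe that monotonicity of $p$ forces $\Pr[\by_{\bi}=-1]\le 1/2$ on any one-dimensional subcube, apply Hoeffding to bound the per-iteration false-rejection probability by an arbitrary inverse polynomial in $n/\eps$, and close with a union bound over the $\tilde O(n/\eps^2)$ total iterations. Your write-up is a bit more explicit about the Hoeffding exponent and the constant-chasing in $m$, but the structure and key idea are identical to the paper's.
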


\begin{proof}
    Note that if $p$ is monotone, then for any restriction $\rho \in \{-1, 1, *\}^n$, which has one coordinate $i$ with $\rho_i = *$, a sample $\by$ from $p_{|\rho}$ must have the probability that $\by_i$ is 1 is at least the probability that it is $-1$. A standard Hoeffding bound implies that if one takes $m = O(\log(n/\eps)/\eta)$ samples of some event which is more likely to be $1$ than $-1$, the probability that the number of $1$'s observed is smaller than $m (1/2 - \sqrt{\eta}/2)$ is smaller than $\poly(\eps / n)$, for an arbitrarily large polynomial. Note that the number of times we may wrongfully reject is at most the query complexity, which is at most $\tilde{O}(n/\eps^2)$. So we may union bound as desired.
\end{proof}

\begin{lemma}\label{lem:far-case-reject}
    Whenever $p$ is $\eps$-far from monotone, the algorithm outputs ``reject'' with probability at least $0.9$.
\end{lemma}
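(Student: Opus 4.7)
The plan is to invoke the real-valued directed Talagrand inequality (\Thm{l1-talagrand}, or rather its consequence Corollary \ref{cor:l1-tal}) on the probability mass function $p$ and then, via an ``importance sampling'' reweighting, transfer the resulting inequality from the uniform measure over $\{-1,1\}^n$ to the distribution $p$ that the outer sampling step actually uses. Under the hypothesis that $p$ is $\eps$-far from monotone, this yields
\begin{equation*}
\sum_{x \in \{-1,1\}^n} \left( \sum_{i : x_i = -1} \left( (p(x) - p(x^{(i)}))^+ \right)^2 \right)^{1/2} \;\geq\; \Omega\!\left(\frac{\eps}{\sqrt{\log n}}\right).
\end{equation*}

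For the reweighting, I would introduce for every edge with $x_i = -1$ the relative bias $b(x, i) := (p(x) - p(x^{(i)}))^+ / (p(x) + p(x^{(i)})) \in [0, 1]$. This is exactly twice the advantage of the outcome $-1$ in the conditional distribution of $p$ on the one-dimensional subcube $\{x, x^{(i)}\}$, so it is precisely what the inner Hoeffding test is designed to detect. Any positive $(p(x) - p(x^{(i)}))^+$ forces $p(x) \geq (p(x) + p(x^{(i)}))/2$, so $(p(x) - p(x^{(i)}))^+ \leq 2\, p(x)\, b(x, i)$ pointwise, and substituting into the display above gives
\begin{equation*}
\Ex_{\bx \sim p}\!\left[ \left( \sum_i b(\bx, i)^2 \right)^{\!1/2} \right] \;\geq\; \Omega\!\left( \frac{\eps}{\sqrt{\log n}} \right),
\end{equation*}
from which Jensen's inequality yields $\Ex_{\bx \sim p,\, \bi \sim [n]}[\,b(\bx, \bi)^2\,] \geq \Omega(\eps^2 / (n \log n))$. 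Now I would bucket the distribution of $b(\bx, \bi)$ dyadically, setting $q_k := \Pr[b(\bx, \bi) \in [2^{-k-1}, 2^{-k})]$. Biases below the smallest threshold $\sqrt{\eta_0} = \tilde{O}(\eps/\sqrt{n})$ detectable by the algorithm contribute at most $\eta_0$ to the second moment, a lower-order term, so a pigeonhole over the $O(\log(n/\eps))$ remaining scales exhibits some $k^* \leq O(\log(n/\eps))$ satisfying $q_{k^*} \geq \Omega\!\left( 4^{k^*} \eps^2 / (n \log n \cdot \log(n/\eps)) \right)$.

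To finish, match $k^*$ to the algorithm's iteration $w^*$ chosen so that $\sqrt{\eta_{w^*}} \leq 2^{-k^*-1}$; the linear dependence of $\eta_w$ on $2^w$ makes $2^{w^*}$ of order $n \log(n/\eps) \log n / (4^{k^*} \eps^2)$, which lies within the range of $w$'s iterated over. Then $t_{w^*} \cdot q_{k^*} = \Omega(\log(n/\eps))$, so a Chernoff bound shows that with probability at least $1 - \mathrm{poly}(\eps/n)$ some trial samples an edge where $b(\bx, \bi) \geq \sqrt{\eta_{w^*}}$; conditioned on such a trial, Hoeffding on the $m = O(\log(n/\eps)/\eta_{w^*})$ subcube samples correctly declares the bias with the same failure probability, and a union bound over trials finishes the argument. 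The main obstacle is the importance-sampling step: a naive directed Poincar\'{e} bound would only give $\Ex[b(\bx, \bi)] = \Omega(\eps/n)$, which is useless because detection costs $\Omega(1/\tau^2)$ queries for bias $\tau$; the $\ell_2$-structure on the left-hand side of \Thm{l1-talagrand} is precisely what guarantees that the bias concentrates on a small set of edges, while the inequality $(p(x) - p(x^{(i)}))^+ \leq 2 p(x) b(x, i)$ is what keeps the reweighting from the uniform measure to $p$ lossless up to a factor of two.
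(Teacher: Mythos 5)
Your proposal is correct and follows the same high-level strategy as the paper (invoke the real-valued directed Talagrand inequality via Corollary~\ref{cor:l1-tal}, perform the importance-sampling reweighting to $\bx\sim p$, and then bucket the relative bias to match the algorithm's doubling search over $w$), but your bucketing is carried out differently. The paper uses a two-level bucketing: it first buckets on the value of the per-point $\ell_2$-norm $\bigl(\sum_i b(\bx,i)^2\bigr)^{1/2}$ to produce the parameter $\gamma$, and then, for a fixed $x$ in a good bucket, it buckets the coordinates $i$ by the size of $b(x,i)^2$ to produce the parameter $\ell$, eventually setting $w=2\gamma+\ell$. You instead apply Jensen's inequality once to collapse the $\ell_2$-structure into the scalar second moment $\Ex_{\bx\sim p,\,\bi\sim[n]}[b(\bx,\bi)^2]\geq\Omega(\eps^2/(n\log n))$, drop the sub-threshold contribution (at most $\eta_0$, a lower-order term), and then perform a single dyadic pigeonhole on the marginal distribution of $b(\bx,\bi)$. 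This is a legitimate and arguably cleaner route: what the algorithm actually needs is a lower bound on the probability, over the joint draw of $(\bx,\bi)$, of landing on an edge with detectable bias, and this depends only on the marginal law of $b(\bx,\bi)$, not on how the per-$x$ norms are distributed. Both arguments lose the same $\polylog$ factors and yield a scale $w^*$ with $t_{w^*}\cdot q_{k^*}=\Omega(\log(n/\eps))$, from which the Hoeffding step and the high-probability conclusion follow identically. One small caveat worth cleaning up: you should set the cut-off for ``sub-threshold'' buckets to a constant multiple of $\eta_0$ (not $\eta_0$ itself) so that the surviving $k^*$ maps to a $w^*\geq 0$ with room to spare; as stated, the borderline $k^*$ could call for $\eta_{w^*}$ just below the algorithm's minimum. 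This is a constant-factor adjustment, not a gap in the approach.
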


\begin{proof}
    We show that whenever $p$ is $\eps$-far from monotone, there exists some $\gamma \in \{0, \dots, h\}$ with $h = O(\log(n/\eps))$ and a setting of $\ell \in \{ 0, \dots, r\}$ where $r = O(\log(n/\eps))$ which satisfies $2^{2\gamma + r +1} = \tilde{O}(n / \eps^2)$ and
    \begin{align*}
        \Prx_{ \substack{\bx \sim p \\ \bi \sim [n]}}\left[\left(\dfrac{(p(\bx^{(\bi\to-1)}) - p(\bx^{(\bi\to1)}))^+}{p(\bx^{(\bi\to-1)}) + p(\bx^{(\bi\to1)})}\right)^2 \geq \eta\right] \geq \frac{1}{r \cdot 2^{\gamma + \ell}}.
    \end{align*}
    for $\eta = c_0^2 \eps^2 \cdot 2^{2\gamma+\ell} / (16 h \cdot n \cdot \log n)$. When the algorithm iterates over all $w \geq 0$ such that $2^w = \tilde{O}(n/\eps^2)$, it will eventually consider $w = 2\gamma + \ell$.
    This implies that except with probability $0.01$, one of the $t$ samples $\bx \sim p$ and $\bi \sim [n]$ satisfy the above bound, since we repeat $t = O(2^{w}\log(n/\eps))$ times and $2^{w} = 2^{2\gamma+\ell}$ is larger than $2^{\gamma+\ell}$. Once that is set, with probability except $0.01$, the algorithm outputs reject; the subcube conditioning query $\brho$ is exactly sampling from $\{-1,1\}$ whose probability of being $-1$ is at least $1/2 + \sqrt{\eta}$. By a Hoeffding bound, the probability that the number of $-1$'s is smaller than $m(1/2 + \sqrt{\eta}/2)$ is at most $0.01$. From Corollary~\ref{cor:l1-tal}, for 
    small enough constant $c_0$. the fact that $p$ is $\eps$-far from monotone implies,
    \begin{align*}
        \frac{c_0\eps}{\sqrt{\log n}} &\leq \sum_{x \in \{-1,1\}^n} \sqrt{\sum_{i:x_i = -1} \left( \left(p(x^{(i\to-1)}) - p(x^{(i\to1)} \right)^+ \right)^2} \\
        &= \Ex_{\bx \sim p}\left[ \sqrt{\sum_{i:\bx_i=-1} \left(\dfrac{(p(\bx^{(i\to-1)}) - p(\bx^{(i\to1)}))^+}{p(\bx^{(i\to-1)})} \right)^2} \right].
    \end{align*}
    Furthermore, $p(\bx^{(i\to-1)}) - p(\bx^{(i\to1)}) \geq 0$ implies $p(\bx^{(i\to-1)}) + p(\bx^{(i\to 1)}) \leq 2 p(\bx^{(i\to-1)})$. So we may lower bound
    \begin{align}
        \frac{c_0 \eps}{4\sqrt{\log n}} \leq \Ex_{\bx \sim p}\left[ \sqrt{\sum_{i:\bx_i=-1} \left(\dfrac{(p(\bx^{(i\to-1)}) - p(\bx^{(i\to1)}))^+}{p(\bx^{(i\to-1)}) + p(\bx^{(i\to1)})} \right)^2} \right] \label{eq:exp1}
    \end{align}
    Notice that the maximum quantity within the expectation in (\ref{eq:exp1}) is $\sqrt{n}$, since each of the terms being added is between $0$ and $1$. Therefore, there must exist some $\gamma \in \{0,\dots, h \}$ with $h = \lceil \log_2(4\sqrt{n}/(c_0\eps))\rceil + 1 = O(\log(n/\eps))$ which satisfies
    \begin{align}
     \Prx_{\bx \sim p}\left[\sum_{i:\bx_i=-1} \left(\dfrac{(p(\bx^{(i\to-1)}) - p(\bx^{(i\to1)}))^+}{p(\bx^{(i\to-1)}) + p(\bx^{(i\to1)})} \right)^2 \geq \frac{c_0^2 \cdot \eps^2 \cdot 2^{2\gamma}}{16h\log n} \right] \geq \frac{1}{2^{\gamma}}. \label{eq:good-1}
    \end{align}
    Thus, consider any one of those values of $x$, and in order to simplify the notation, we define 
    \[ \xi \eqdef \frac{c_0^2 \cdot \eps^2 \cdot  2^{2\gamma}}{16h\log n} \qquad \nu_i = \left(\dfrac{(p(x^{(i\to-1)}) - p(x^{(i\to1)}))^+}{p(x^{(i\to-1)}) + p(x^{(i\to1)})} \right)^2,\]
    so that we assume to fix $x$ such that $\sum_{i=1}^n \nu_i \geq \xi$, and each $\nu_i \in [0, 1]$. Consider a partition of the coordinates of $[n]$ into groups $B_1, \dots, B_{r}$, such that $i \in B_{\ell}$ whenever the $i$-th coordinate contributes between $\xi 2^{\ell}/ n$ and $\xi 2^{\ell+1} / n$, and $r$ is chosen is the value $\xi 2^{r+1} / n$ is between $1$ and $2$ (note that, since $\nu_i \in [0, 1]$, $B_{r'}$ for $r' > r$ must be empty), so $r = O(\log(n/\xi))$. Then, there must be some $\ell$ with $|B_{\ell}| \geq n / (r \cdot 2^{\ell+1})$, and this implies
    \begin{align}
        \Prx_{\bi \sim [n]}\left[\left(\dfrac{(p(\bx^{(\bi\to-1)}) - p(\bx^{(\bi\to1)}))^+}{p(\bx^{(\bi\to-1)}) + p(\bx^{(\bi\to1)})} \right)^2 \geq \frac{\xi \cdot 2^{\ell}}{n}  \right] \geq \frac{1}{r \cdot 2^{\ell}}.\label{eq:good-2}
    \end{align}
    The desired bound then follows from the setting of $\eta$, and lower bounding the probability that $\bx \sim p$ satisfies the event of (\ref{eq:good-1}), and then $\bi \sim [n]$ satisfies the event of (\ref{eq:good-2}).
\end{proof}

\subsection{The Real-Valued Directed Talagrand Inequality} \label{sec:tal}

We will prove a ``directed isoperimetric theorem" for real-valued functions. This is an important
tool used for the analysis of the monotonicity tester. We define notions of the
directed boundary for Boolean functions. 

Let $f:\{-1,1\}^n \to [0,1]$ be a function defined on the $n$-dimensional hypercube. The $L_1$-distance 
of $f$ from monotonicity is defined as
\begin{equation*}
	\dist_1(f) \eqdef \min_{g~:~\text{monotone}} ~~\Ex_{\bx \sim \{-1,1\}^n}\left[ |f(\bx) - g(\bx)| \right]
\end{equation*}
where the expectation is over the uniform distribution over $\{-1,1\}^n$.
For a point $x\in \{-1,1\}^n$, define the directed derivative $\grad^-f(x)$ to be the $n$-dimensional vector defined as 
\begin{equation}\label{eq:defgrad}
	\left(\grad^-f(x)\right)_i \eqdef \begin{cases}
		0 & \textrm{if $x_i = 1$} \\
		\left(f(x) - f(x+2e_i)\right)^+ & \text{otherwise}
	\end{cases}
\end{equation}
where $(z)^+$ is a shorthand for $\max(z,0)$. For Boolean-valued $f:\{-1,1\}^n \to \{0,1\}$, the distance $\dist_1(f)$ corresponds to the ``normal'' Hamming distance notion, $\dist_0(f)$.
Based on isoperimetric theorems of Talagrand~\cite{Tal93}, the quantity $\Exp_{\bx} \norm{\grad^-f (\bx)}_2$ can be thought
of as a ``directed surface area" for the function $f$. A deep isoperimetric theorem of Khot, Minzer, and Safra~\cite{KMS18} (see, also~\cite{PRW22}, who showed how to remove the final logarithmic factor)
lower bounds this surface area by the distance to monotonicity.

\begin{theorem}[\cite{KMS18, PRW22}]\label{thm:booliso}
	There exists a universal constant $C > 0$ such that for every $f \colon \{-1,1\}^n \to \{0,1\}$,
$\Exp_{\bx} \norm{\grad^-f (\bx)}_2 \geq C\cdot \dist_0(f)$.
\end{theorem}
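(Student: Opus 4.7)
Following Khot, Minzer, and Safra, I would prove the contrapositive: given $f \colon \{-1,1\}^n \to \{0,1\}$, show that a small directed $\ell_2$-boundary $B(f) := \Exp_{\bx}\|\nabla^- f(\bx)\|_2$ implies a small Hamming distance to monotonicity, by explicitly constructing a monotone function $g$ within Hamming distance $O(B(f)) \cdot 2^n$ of $f$. Proving $\dist_0(f) \leq B(f)/C$ in this form immediately yields the stated inequality $B(f) \geq C \cdot \dist_0(f)$.

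The argument has three ingredients. The first is a \emph{random-restriction reduction}: for a random subset $J \subseteq [n]$ of size $\approx n/2$ with a random assignment $z \in \{-1,1\}^{J}$, the restricted function $f_{J,z}$ has, in expectation, a directed $\ell_2$-boundary comparable to $B(f)$. This should let one either recurse on a lower-dimensional sub-problem or argue that typical restrictions of $f$ are close to small monotone juntas, on which the inequality is easy to verify. The second is a \emph{structural dichotomy} for the directed gradient: partition $\{-1,1\}^n$ into ``heavy'' points where $\|\nabla^- f(x)\|_2 \geq \tau$ and ``light'' points otherwise. Heavy points contribute at most $B(f)/\tau \cdot 2^n$ to the measure by Markov, which is acceptable. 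For light points I would run a random-monotone-path argument: on a uniformly random maximal chain in $\{-1,1\}^n$, the number of violation edges crossed relates to a directed Margulis--Russo--Talagrand quantity, and the fact that $\|\nabla^- f(x)\|_2$ is small at each light point limits how many chains any single violating edge can lie on, producing the Cauchy--Schwarz-style improvement from the (weaker) $\ell_1$ Poincar\'e bound up to the $\ell_2$ Talagrand bound.

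The third and technically deepest ingredient is the \emph{repair procedure}: iteratively flipping $f$ at carefully selected violating points while tracking a potential that combines the Hamming distance to monotonicity with a Fourier-weighted contribution from $\nabla^- f$. The main obstacle---and the reason the result is genuinely deep---is controlling the cascade, since fixing one violation can create new ones elsewhere. KMS handle this by proving a robust version of Friedgut's junta theorem and invoking hypercontractivity to confine the Fourier energy of $\nabla^- f$ to low-degree characters, ensuring the cascade terminates within a constant-factor blowup of the initial boundary. The subsequent PRW22 refinement removes the residual $\sqrt{\log n}$ factor by replacing the junta-extraction step with a sharper random-walk analysis that more tightly bounds the potential decrement per repair step.
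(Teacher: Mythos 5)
This theorem is not proved in the paper: it is imported verbatim from \cite{KMS18} (with the log-factor improvement of \cite{PRW22}) and used as a black box in the proof of \Thm{l1-talagrand}. There is therefore no in-paper argument for you to be compared against, and any sketch you supply is being evaluated against the external KMS/PRW proofs, not against this manuscript.

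As a description of those external proofs, your sketch is a plausible-sounding gesture but does not reflect their actual structure, and as a proof attempt it is far from closing any gap. Two concrete issues. First, the tools you name do not match the argument: Friedgut's junta theorem plays no role in the KMS isoperimetry proof, and the analysis there is not organized as an iterative ``repair'' procedure that flips points and tracks a Fourier-weighted potential while controlling a cascade. The KMS Talagrand-type bound is established analytically via a reduction to a colored/bipartite combinatorial setting and a self-improving amplification over dimensions; the later \cite{PRW22} improvement sharpens that argument rather than replacing a junta-extraction step. The random-restriction and random-walk machinery you invoke is genuinely present in this literature, but as part of the \emph{tester} analysis built on top of the isoperimetric theorem, not inside the proof of the inequality itself. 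Second, even taken on its own terms, the sketch leaves the crux unaddressed: the passage from a light-point bound to an $\ell_2$ (rather than $\ell_1$) gain is precisely where the difficulty lies, and ``Cauchy--Schwarz-style improvement'' plus ``hypercontractivity confines the cascade'' is a placeholder for the hard work, not an argument. None of this affects the paper's correctness, since the paper never claims to prove \Thm{booliso}; but if you intend to derive it yourself you should consult the actual proofs rather than reconstruct them from the tester literature.
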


Theorem~\ref{thm:l1-talagrand} gives a real-valued generalization of the above theorem, with a $\sqrt{\log n}$ loss in the bound. The proof appears in Subsection~\ref{sec:proof-tal}, but we state the following corollary used in the tester's analysis.
\begin{corollary} \label{cor:l1-tal} Let $p$ be a distribution over $\{-1,1\}^n$ that is $\eps$-far
from monotone. Then 
\[
\sum_{x \in \{-1,1\}^n} \sqrt{\sum_{i:x_i = -1} \left( \left(p(x^{(i\to-1)}) - p(x^{(i\to1)} \right)^+ \right)^2} = \Omega\left(\frac{\eps}{\sqrt{\log n}}\right).
\]
\end{corollary}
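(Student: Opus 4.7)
\medskip

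\noindent\textbf{Proof Proposal for Corollary~\ref{cor:l1-tal}.}

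The natural plan is to instantiate Theorem~\ref{thm:l1-talagrand} with the rescaled probability mass function $f : \{-1,1\}^n \to \mathbb{R}$ defined by $f(x) = 2^n p(x)$, and then match both sides of the resulting inequality to the quantities appearing in the corollary. Observe that the expectation on the left-hand side of Theorem~\ref{thm:l1-talagrand} contributes a factor of $1/2^n$, while pulling the scaling out of $(\cdot)^+$ and the square root contributes a compensating factor of $2^n$. Thus the left-hand side becomes
\[
\sum_{x \in \{-1,1\}^n} \sqrt{\sum_{i:x_i=-1} \bigl(\bigl(p(x)-p(x^{(i)})\bigr)^+\bigr)^2},
\]
and since $x_i = -1$ means $x = x^{(i\to-1)}$ and $x^{(i)} = x^{(i\to 1)}$, this is exactly the quantity appearing on the left-hand side of the corollary.

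The remaining task is to show $\dist_1(f) = \Omega(\eps)$. Let $g : \{-1,1\}^n \to \mathbb{R}$ be any monotone function; my aim is to build from $g$ a monotone \emph{distribution} $q$ with $\dtv(p,q) \leq \Ex_{\bx}[|f(\bx) - g(\bx)|]$. First, without loss of generality we may replace $g$ by $\max(g,0)$: this keeps $g$ monotone and does not increase $\Ex[|f - g|]$ since $f \geq 0$. Next, set $Z \eqdef \Ex_{\bx}[g(\bx)]$. If $Z = 0$ then $g \equiv 0$, which forces $\dist_1(f) \geq \Ex[f] = 1 \geq \eps$, and we are done. Otherwise define the monotone distribution $q(x) \eqdef g(x)/(2^n Z)$.

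To bound $\dtv(p,q)$ I would use the triangle inequality $|f(x) - g(x)/Z| \leq |f(x) - g(x)| + g(x)\,|1 - 1/Z|$, sum over $x$, divide by $2^n$, and note that $\frac{1}{2^n}\sum_x g(x) = Z$, yielding
\[
2\,\dtv(p,q) \;=\; \Ex_{\bx}\!\bigl[\,|f(\bx) - g(\bx)/Z|\,\bigr] \;\leq\; \Ex_{\bx}[|f - g|] + |Z - 1|.
\]
Since $\Ex[f] = 1$, we have $|Z-1| = |\Ex[g] - \Ex[f]| \leq \Ex[|f-g|]$, so $\dtv(p,q) \leq \Ex[|f-g|]$. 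Taking the infimum over monotone $g$ gives $\dtv(p, q_{\min}) \leq \dist_1(f)$ for some monotone distribution $q_{\min}$, so $\eps \leq \dist_1(f)$. Combining with Theorem~\ref{thm:l1-talagrand} yields the corollary.

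The only mild subtlety in this plan is the normalization step: the optimal monotone $g$ need not integrate to $1$, so one cannot simply read off a monotone distribution from $g$ without paying the $|Z-1|$ overhead. Fortunately, this overhead is itself controlled by $\dist_1(f)$ (losing only a factor of $2$ in TV distance, which is absorbed in the $\Omega(\cdot)$), so this is not a real obstacle.
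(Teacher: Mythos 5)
Your proof is correct and takes essentially the same approach as the paper's: instantiate Theorem~\ref{thm:l1-talagrand} on the (rescaled) probability mass function, and then show $\dist_1$ is $\Omega$ of the TV distance to monotone distributions by normalizing the optimal monotone approximator $g$ to a distribution and paying an overhead of $|Z-1| = |\Ex[g]-\Ex[f]|$, which is itself bounded by $\Ex[|f-g|]$. The only cosmetic difference is that you apply the theorem to $f = 2^n p$ so the $2^{\pm n}$ factors cancel inside the left-hand side, whereas the paper applies it to $p$ directly and cancels the $2^{-n}$ at the end; your handling of the degenerate case $Z=0$ and the explicit replacement of $g$ by $\max(g,0)$ are tidy details the paper glosses over.
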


\begin{proof} Let $\eps(p)$ be the distance of $p$ to monotonicity. Note that this is the distance
over distributions, while \Thm{l1-talagrand} refers to $L_1$-distance between arbitrary functions.
So we need an extra calculation to apply \Thm{l1-talagrand}.

Let $\cM$ be the set of monotone distributions. Then, $\eps(p) = \min_{q \in \cM} \dtv(p,q)
= \min_{q \in \cM} \|p-q\|_1/2$. On the other hand, $\dist_1(p) = \min_{g: \textrm{monotone}} \Exp_{\bx}|p(\bx) - g(\bx)|
= 2^{-n} \min_{g: \textrm{monotone}} \|p-g\|_1$. Note that the minimizer $g$ is non-negative, since $p$
is non-negative. Hence, the function $f = g/\|g\|_1$ is a distribution. 

By triangle inequality,
\begin{eqnarray*}
\eps(p) \leq \|p - f\|_1 \leq \|p-g\|_1 + \|f-g\|_1 = \|p-g\|_1 + \|g - g/\|g\|_1 \|_1
\end{eqnarray*}
Observe that $\|g - g/\|g\|_1 \|_1 = \sum_x |g(x) - g(x)/\|g\|_1| = |1 - 1/\|g\|_1| \cdot \sum_x |g(x)|
= | 1 - \|g\|_1|$. Since $p$ is a distribution, this expression is equal to $| \|p\|_1 - \|g\|_1|$.
And finally, $| \sum_x (|p(x)| - |g(x)|)| \leq \sum_x |p(x) - g(x)| = \|p-g\|_1$. 
Overall, we deduce that $\eps(p) \leq 2\|p-g\|_1$. Recall that $\dist_1(p)$ is defined
using an expectation over the domain, so $\eps(p) \leq 2 \cdot 2^n \dist_1(p)$. 

With our lower bound for $\dist_1(g)$, we can apply \Thm{l1-talagrand}. 
So $\Exp_{\bx} \norm{\grad^-p(\bx)} = \Omega(\dist_1(p)/\sqrt{\log n}) = \Omega(2^{-n} \eps(p)/\sqrt{\log n})$.
We expand out the expression for $\grad^-p(x)$ to wrap up the proof.
\begin{eqnarray*}
\Ex_{\bx\sim\{-1,1\}^n}\left[ \norm{\grad^-p(\bx)} \right] = 2^{-n} \sum_{x \in \{-1,1\}^n} \norm{\grad^-p(x)}
= 2^{-n}  \sum_{x \in \{-1,1\}^n} \sqrt{\sum_{i:x_i = -1} \left( (p(x) - p(x+2e_i) )^+ \right)^2}
\end{eqnarray*}
As argued above, this expression is lower bounded by $\Omega(2^{-n} \eps(p)/\sqrt{\log n})$.
The $2^{-n}$ terms ``cancel out", and noting that $\eps(p) \geq \eps$, we get the desired bound.
\end{proof}

\subsection{The proof of \Thm{l1-talagrand}} \label{sec:proof-tal}

By a simple translation and rescaling argument, we reduce the function range to $[0,1]$.
This will make subsequent calculations easier.

\begin{claim} \label{clm:rescale} Consider $f:\{-1,1\}^n \to \R$. For positive $\alpha \in \R^+$ and
any $\beta \in \R$, define the function $\hat{f}$ where $\hat{f}(x) = \alpha f(x) + \beta$. Then,
$\EX_x[\|\nabla^- \hat{f}\|_2]/\dist_1(\hat{f}) =\EX_x[\|\nabla^- f\|_2]/\dist_1(f) $.
\end{claim}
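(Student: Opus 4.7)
The plan is to show that both the numerator $\EX_x[\|\nabla^- \hat{f}\|_2]$ and the denominator $\dist_1(\hat{f})$ scale by the same factor $\alpha$ under the transformation $\hat f = \alpha f + \beta$, so their ratio is invariant. The additive shift by $\beta$ disappears in both quantities (it cancels inside the differences that define $\nabla^-$ and inside the differences that define the $L^1$ distance when comparing to a translated monotone function), and the multiplicative scaling by $\alpha > 0$ commutes with both the positive-part operator $(\cdot)^+$ and with absolute value.

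First I would handle the numerator. For any coordinate $i$ with $x_i = -1$, using $\alpha > 0$ and that $(\alpha z)^+ = \alpha z^+$,
\[
\bigl(\nabla^- \hat{f}(x)\bigr)_i = \bigl(\hat{f}(x) - \hat{f}(x + 2e_i)\bigr)^+ = \bigl(\alpha(f(x) - f(x+2e_i))\bigr)^+ = \alpha \bigl(\nabla^- f(x)\bigr)_i,
\]
so $\|\nabla^- \hat{f}(x)\|_2 = \alpha \|\nabla^- f(x)\|_2$ pointwise, and taking expectations yields $\EX_x[\|\nabla^- \hat{f}\|_2] = \alpha \EX_x[\|\nabla^- f\|_2]$.

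Next, for the denominator, I would use the fact that $g \colon \{-1,1\}^n \to \R$ is monotone if and only if $\alpha g + \beta$ is monotone (again because $\alpha > 0$). This gives a bijection on the class of monotone functions via $g \leftrightarrow \alpha g + \beta$. Substituting $g = \alpha g' + \beta$ in the definition of $\dist_1(\hat{f})$,
\[
\dist_1(\hat{f}) = \min_{g'\,\text{monotone}} \EX_x\bigl[|\alpha f(x) + \beta - \alpha g'(x) - \beta|\bigr] = \alpha \min_{g'\,\text{monotone}} \EX_x\bigl[|f(x) - g'(x)|\bigr] = \alpha \dist_1(f).
\]
Dividing the two identities gives $\EX_x[\|\nabla^- \hat{f}\|_2]/\dist_1(\hat{f}) = \EX_x[\|\nabla^- f\|_2]/\dist_1(f)$, as claimed.

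There is no substantive obstacle here; the only subtlety is that positivity of $\alpha$ is essential at two points (to pull $\alpha$ through $(\cdot)^+$ and to preserve monotonicity under the affine change of variables), and one must be a little careful that the bijection between monotone $g$ and monotone $g'$ is really a bijection (which it is, since the map $g' \mapsto \alpha g' + \beta$ is invertible on all of $\R^{\{-1,1\}^n}$ and preserves monotonicity in both directions).
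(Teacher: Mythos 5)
Your proof is correct and follows essentially the same route as the paper: both show pointwise that $\nabla^-\hat{f} = \alpha\,\nabla^- f$ (using $\alpha > 0$ to pull the scalar through $(\cdot)^+$), and both establish $\dist_1(\hat{f}) = \alpha\,\dist_1(f)$ by observing that $g \mapsto \alpha g + \beta$ is a bijection on monotone functions. No meaningful differences.
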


\begin{proof} The monotonicity violations in $f$ and $\hat{f}$ are identical.
For any point $x$ and coordinate $i$, $ (\nabla^- \hat{f}(x))_i = \alpha (\nabla^- f(x))_i$.
Hence, $\EX_x[\|\nabla^- \hat{f}\|_2] = \alpha \EX_x[\|\nabla^- f\|_2]$.
For a function $g$, let $\alpha g + \beta$ be the function whose value at $x$
is $\alpha g(x) + \beta$.
\begin{align*}
 \dist_1(\hat{f}) = \min_{\hat{g}: \textrm{monotone}} \|\hat{f}-\hat{g}\|_1 &= \min_{\hat{g}: \textrm{monotone}} \| (\alpha f + \beta) - \hat{g}\|_1
\min_{\hat{g}: \textrm{monotone}} \|(\alpha f + \beta) - (\alpha (\alpha^{-1}(\hat{g} - \beta)) + \beta)\|
\end{align*}
Monotonicity is preserved by positive scaling and translation, so $\hat{g}$ is monotone iff $(\alpha^{-1}(\hat{g} - \beta))$
is monotone.
Hence,
\begin{align*}
 \dist_1(\hat{f}) = \min_{g: \textrm{monotone}} \|(\alpha f + \beta) - (\alpha g + \beta)\|_1
= \min_{g: \textrm{monotone}} \|\alpha f - \alpha g\|_1 = \alpha \ \dist_1(f)
\end{align*}
We conclude that  
$\EX_x[\nabla^- \hat{f}\|_2]/\dist_1(\hat{f}) =\EX_x[\|\nabla^- f\|_2]/\dist_1(f) $.
\end{proof}

Given $f$, we technically work with the function $\hat{f} = f/2M + 1/2$,
where $M = \max_x |f(x)|$. Observe that $\hat{f}$ has range in $[0,1]$,
and by \Clm{rescale}, the statement of \Thm{l1-talagrand} for $\hat{f}$ implies
the statement for $f$.

Abusing notation, we just assume that $f:\{-1,1\}^n \to [0,1]$.
We use the technique of Berman, Raskhodnikova, and Yaroslavtsev~\cite{BeRaYa14} of using threshold Boolean functions to relate the real-valued $f$ to Boolean functions. 

\noindent
Given $t\in [0,1]$ consider the following Boolean function (Definition 2.1 in~\cite{BeRaYa14}) $f_t : \{-1,1\}^n \to \{0,1\}$
\[
	f_t(x) = \begin{cases}
		1 & \text{if}~ f(x) \geq t; \\
		0 & \text{if}~ f(x) < t
	\end{cases}
\]
\noindent
It is easy to see that for any $x\in \{-1,1\}^n$,
\[
	f(x) = \int_0^{f(x)} dt  =  \int_0^1 f_t(x)~dt ~= \Ex_{\bt \sim [0,1]} \left[ f_{\bt}(x) \right]
\]
where the expectation is over $t$ uniformly distributed over $[0,1]$.
One can perform analogous calculations to relate the $L_1$ distance of (the real valued)
$f$ to the $L_0$ distance of (the Boolean) $f_t$s.  

\begin{lemma}[Lemma 2.1~\cite{BeRaYa14}]\label{lem:bry}
	\noindent
	For any function $f:\{-1,1\}^n \to [0,1]$, $\dist_1(f) = \int_0^1 \dist_0(f_t) dt = \Exp_{\bt} \left[ \dist_0(f_{\bt})\right]$.
\end{lemma}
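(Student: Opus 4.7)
The plan is to prove both inequalities using the classical layer-cake identity: for any $a, b \in [0,1]$,
\[
|a - b| \;=\; \int_0^1 \bigl| \mathbf{1}[a \geq t] - \mathbf{1}[b \geq t] \bigr|\, dt,
\]
which holds because the integrand equals $1$ precisely on the interval between $\min(a,b)$ and $\max(a,b)$. Applied pointwise, this gives, for any $g : \{-1,1\}^n \to [0,1]$, the identity $|f(x) - g(x)| = \int_0^1 |f_t(x) - g_t(x)|\, dt$, where $g_t$ denotes the threshold of $g$ at level $t$, in direct analogy with $f_t$.

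For the direction $\dist_1(f) \geq \Exp_{\bt}[\dist_0(f_{\bt})]$, I would take an optimal monotone $g^* : \{-1,1\}^n \to \R$ achieving $\dist_1(f)$. First I would argue that we may assume $g^*$ has range in $[0,1]$: clipping $g^*$ into $[0,1]$ preserves monotonicity and can only decrease $\Exp_{\bx}[|f(\bx) - g^*(\bx)|]$ since $f$ itself takes values in $[0,1]$. Each threshold $g^*_t$ is then a monotone Boolean function, and interchanging expectation and integral (which is justified by the finiteness of the domain) yields
\[
\dist_1(f) = \Exp_{\bx}\Bigl[ \int_0^1 |f_t(\bx) - g^*_t(\bx)|\, dt \Bigr] = \int_0^1 \Exp_{\bx}[|f_t(\bx) - g^*_t(\bx)|]\, dt \geq \int_0^1 \dist_0(f_t)\, dt.
\]

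For the reverse direction, for each $t \in [0,1]$ I would let $h_t^* : \{-1,1\}^n \to \{0,1\}$ be an optimal monotone Boolean approximation to $f_t$, and define the candidate function $g(x) := \int_0^1 h_t^*(x)\, dt$. Since every $h_t^*$ is monotone Boolean, $g$ is real-valued and monotone (monotonicity is preserved under pointwise integration). The triangle inequality for integrals gives pointwise $|f(x) - g(x)| \leq \int_0^1 |f_t(x) - h_t^*(x)|\, dt$, and taking expectations yields
\[
\dist_1(f) \leq \Exp_{\bx}[|f(\bx) - g(\bx)|] \leq \int_0^1 \Exp_{\bx}[|f_t(\bx) - h_t^*(\bx)|]\, dt = \Exp_{\bt}[\dist_0(f_{\bt})].
\]

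The only technicality I anticipate is measurability of $t \mapsto h_t^*(x)$, so that the defining integral for $g(x)$ makes sense as a Riemann integral. This is the main obstacle in spirit, but it dissolves immediately: since $f$ takes only finitely many values on the finite domain $\{-1,1\}^n$, the map $t \mapsto f_t$ is piecewise constant with finitely many break points in $[0,1]$, so each $h_t^*$ may be selected piecewise constant on the same partition, and every integral collapses to a finite sum.
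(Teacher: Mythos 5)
Your proof is correct. Note, however, that the paper does not actually give its own proof of this lemma: it is cited verbatim from Berman--Raskhodnikova--Yaroslavtsev (Lemma~2.1 of~\cite{BeRaYa14}) and used as a black box. Your argument reconstructs the standard layer-cake proof from that source, and the reasoning is sound on every point. In particular: the clipping step in the $\geq$ direction is correct (clipping $g^*$ into $[0,1]$ preserves monotonicity and cannot increase $|f - g^*|$ at any point since $f$ takes values in $[0,1]$); the $\leq$ direction's construction $g(x) = \int_0^1 h_t^*(x)\,dt$ is monotone because monotonicity is closed under pointwise nonnegative combinations; and your remark that the measurability concern is vacuous, since $t \mapsto f_t$ is piecewise constant with break points among the finitely many values of $f$, is exactly the right disposal of that technicality.
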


The main work is in relating the (directed) gradients of $f$ to the corresponding
gradients of $f_t$. This is where we suffer a $\sqrt{\log n}$ loss.
 
%
%
%

\begin{lemma}
	For all $x\in \{-1,1\}^n$, $\norm{\grad^- f(x)}_2 = \Omega(1/\sqrt{\log n}) \Exp_t \norm{\grad^- f_t(x)}_2 $.	
\end{lemma}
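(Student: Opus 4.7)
Fix $x \in \{-1,1\}^n$ and write $u = f(x)$. For each coordinate $i$ with $x_i = -1$, set $a_i := (f(x) - f(x+2e_i))^+ = (\grad^- f(x))_i$; for $i$ with $x_i = 1$, set $a_i = 0$. Then $\norm{\grad^- f(x)}_2 = \sqrt{\sum_i a_i^2}$. My plan is to compute $\Exp_{\bt}[\norm{\grad^- f_{\bt}(x)}_2]$ explicitly as a linear functional of the sorted sequence of $a_i$'s, and then apply Cauchy--Schwarz with harmonic weights, which is exactly where the $\sqrt{\log n}$ factor appears.

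First I would observe that $(\grad^- f_t(x))_i = 1$ precisely when $f_t(x) = 1$ and $f_t(x+2e_i) = 0$, i.e., when $f(x+2e_i) < t \leq f(x)$. Equivalently, $|S_t| := \sum_i (\grad^- f_t(x))_i$ satisfies $|S_t| = |\{i : a_i > u - t\}|$ for $t \leq u$ and $|S_t| = 0$ for $t > u$. Since $f_t$ is Boolean, $\norm{\grad^- f_t(x)}_2 = \sqrt{|S_t|}$. Substituting $s = u-t$ and using $a_i \leq u$, I obtain
\[
\Exp_{\bt}\bigl[\norm{\grad^- f_{\bt}(x)}_2\bigr] = \int_0^{u} \sqrt{|\{i : a_i > u-t\}|}\, dt = \int_0^\infty \sqrt{|\{i : a_i > s\}|}\, ds.
\]

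Next, sort $a_1 \geq a_2 \geq \cdots \geq a_n \geq 0$ and set $a_{n+1} = 0$. On the interval $(a_{k+1}, a_k)$ the cardinality $|\{i : a_i > s\}|$ equals $k$, so the integral telescopes via Abel summation:
\[
\int_0^\infty \sqrt{|\{i : a_i > s\}|}\, ds = \sum_{k=1}^n \sqrt{k}\,(a_k - a_{k+1}) = \sum_{k=1}^n \bigl(\sqrt{k} - \sqrt{k-1}\bigr)\, a_k \leq \sum_{k=1}^n \frac{a_k}{\sqrt{k}},
\]
since $\sqrt{k} - \sqrt{k-1} \leq 1/\sqrt{k}$. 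Finally, Cauchy--Schwarz gives
\[
\sum_{k=1}^n \frac{a_k}{\sqrt{k}} \leq \sqrt{\sum_{k=1}^n a_k^2}\,\cdot \sqrt{\sum_{k=1}^n \frac{1}{k}} = O(\sqrt{\log n})\cdot \norm{\grad^- f(x)}_2.
\]
Combining the two displayed inequalities and rearranging yields $\norm{\grad^- f(x)}_2 \geq \Omega(1/\sqrt{\log n})\cdot \Exp_{\bt}\bigl[\norm{\grad^- f_{\bt}(x)}_2\bigr]$, as desired.

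The only even mildly delicate step is the Abel summation rewriting the layer-cake integral as $\sum_k (\sqrt{k}-\sqrt{k-1})a_k$; once that is in hand, the harmonic weights $1/\sqrt{k}$ force the Cauchy--Schwarz estimate to lose exactly $\sqrt{\sum_{k \leq n} 1/k} = \Theta(\sqrt{\log n})$, which is the source of the claimed loss in the real-valued directed Talagrand inequality. I do not expect any other obstacles; the reduction is pointwise in $x$ and does not interact with the structure of the hypercube beyond the one-dimensional ordering of the $a_i$'s.
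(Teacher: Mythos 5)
Your proposal is correct and follows essentially the same route as the paper: both sort the directed edge decreases $a_1 \ge \cdots \ge a_d$, rewrite $\Exp_{\bt}\bigl[\norm{\grad^- f_{\bt}(x)}_2\bigr]$ as $\sum_k (\sqrt{k}-\sqrt{k-1})\,a_k$, and then apply Cauchy--Schwarz against the harmonic sum to absorb the $O(\sqrt{\log n})$ loss. Your layer-cake integral $\int_0^\infty \sqrt{|\{i : a_i > s\}|}\,ds$ is just a repackaging of the paper's ``probability that $\bt$ falls in the interval $(f(y_i), f(y_{i+1})]$'' computation, so there is no substantive difference.
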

\begin{proof}
Fix any $x \in \{-1,1\}^n$. Let $y_1, \ldots, y_d \in \{-1,1\}^n$ denote the ``up''-neighbors of $x$ which satisfy $f(x) > f(y_j)$. In particular, there are at most $d \leq n$ points $y_1 ,\dots, y_d$ such that, for every $j \in [d]$, $y_j = x + 2e_i$ for some $i$, and in addition, $f(x) > f(y_j)$.
Order the indices so as to assume $f(y_1) \leq f(y_2) \leq \cdots \leq f(y_d)$ and let $a_j := f(x) - f(y_j)$ (and so $a_1 \geq a_2 \geq \cdots \geq a_d$). By definition, we have defined $a_1,\dots, a_d$ to have $\norm{\grad^- f(x)}_2 = (\sum_{j=1}^d a^2_j)^{1/2}$.

For $t \in [0, 1]$, consider the function $f_t$, and let edge $(x,y_j)$ be called a violation in $f_t$ if $f_t(x) = 1$ and $f_t(y_j) = 0$.
Observe that only violated edges contribute to $\norm{\grad^-f_t(x)}$. 
Notice that for any $t \in (f(y_i), f(y_{i+1})]$, the edge $(x,y_j)$ is a violation in $f_t$ iff $j \leq i$.
Hence, if $t \in (f(y_i), f(y_{i+1})]$, then the vector $\grad^-f_t(x)$ 
has exactly $i$ non-zeros and $\norm{\grad^- f_t(x)}_2 = \sqrt{i}$. For $i < d$, the probability that $t \in (f(y_i), f(y_{i+1})]$
is exactly $y_{i+1} - y_i = a_i - a_{i+1}$. The probability that $t \in (f(y_d), x]$
is exactly $a_d$.
%
Thus,
\[
	\Ex_{\bt\sim[0,1]}\left[ 	\norm{\grad^- f_{\bt}(x)}_2 \right] = \sum_{i=1}^{d-1} \left(a_i - a_{i+1}\right)\sqrt{i} + a_d \sqrt{d} = \sum_{i=1}^d a_i \cdot \left(\sqrt{i} - \sqrt{i-1}\right)
\]
By Cauchy-Schwarz and the following calculation, we complete the proof
\begin{equation*}
	\Ex_{\bt\sim [0,1]}\left[ 	\norm{\grad^- f_{\bt}(x)}_2 \right]\leq \norm{\grad^- f(x)}_2 \cdot \sqrt{\sum_{i=1}^d \left(\sqrt{i} - \sqrt{i-1}\right)^2}\leq O(\sqrt{\log n})\cdot \norm{\grad^- f(x)}_2
\end{equation*}
since $\sqrt{i} - \sqrt{i-1} = 1/(\sqrt{i}+\sqrt{i-1}) \leq 1/\sqrt{i}$, and so $\sum_{i \leq d} (\sqrt{i} - \sqrt{i-1})^2 \leq \sum_{i \leq d} 1/i = O(\log d)$.
%
\end{proof}

We now complete the proof of \Thm{l1-talagrand}. By the above lemma,
\begin{align*}
\Ex_{\bx\sim\{-1,1\}^n}\left[ \norm{\grad^- f(\bx)}_2\right] &= \Omega(1/\sqrt{\log n}) \Ex_{\substack{\bx \sim \{-1,1\}^n \\ \bt \sim [0,1]}} \left[ \norm{\grad^- f_{\bt}(\bx)}_2\right] \\
    &= \Omega(1/\sqrt{\log n}) \Ex_{\bt \sim [0,1]} \left[  \Ex_{\bx\sim\{-1,1\}^n}\left[ \norm{\grad^- f_{\bt}(\bx)}_2 \right] \right].
\end{align*}
By the directed Boolean isoperimetric statement of \Thm{booliso}, $\Exp_{\bx} \norm{\grad^- f_t(\bx)}_2 = \Omega(\dist_0(f_t))$.
We apply this bound and then \Lem{bry} to relate back to $f$.
\begin{align*}
\Ex_{\bx\sim\{-1,1\}^n}\left[ \norm{\grad^- f(\bx)}_2 \right] = \Omega(1/\sqrt{\log n}) \Ex_{\bt\sim[0,1]} \left[\dist_0(f_{\bt})\right] = \Omega(1/\sqrt{\log n}) \cdot \dist_1(f)
\end{align*}


\section{Lower Bound for Testing Monotonicity}\label{sec:mon-lb}

In this section, we prove a query complexity lower bound on testing monotonicity of distributions using subcube conditioning queries.
\begin{theorem}[Monotonicity Testing -- Lower Bound]\label{thm:mon-lb}
For any $\eps \in (0,1)$, an $\eps$-test for monotonicity of distributions must make $\tilde{\Omega}(n/\eps^2)$ queries.
\end{theorem}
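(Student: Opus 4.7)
The plan is to apply Yao's minimax principle by exhibiting two distributions over inputs---one supported on monotone distributions, the other on distributions that are $\eps$-far from monotone---so that no deterministic algorithm making $o(n/\eps^2)$ subcube conditioning queries can distinguish them with constant advantage. Both ensembles will be supported on \emph{product} distributions $p_\mu$ on $\{-1,1\}^n$ specified by a mean vector $\mu \in [-1,1]^n$. The first step is to observe (as in Lemma~\ref{lem:tree-to-iid}) that on product distributions subcube conditioning confers no advantage over i.i.d.\ sampling: conditioning on a subset of coordinates fixes those bits but leaves the remaining coordinates independently distributed with their original marginals. Thus it suffices to prove a sample-complexity lower bound for distinguishing the two ensembles from i.i.d.\ samples.

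The hard instances will be built from a pair of one-dimensional distributions $\cA$ and $\cB$ on $[-1,1]$, constructed (via a Chebyshev-system/Gauss-quadrature style argument, as in Lemma~\ref{lem:one-dim-dist}) so that $\cA$ is supported on $[0,1]$, $\cB$ places constant probability mass at $-\eps/\sqrt{n}$, and the first $k = \Theta(\log n / \log \log n)$ moments of $\cA$ and $\cB$ agree. The yes-instance draws $\mu \sim \cA^{\otimes n}$; since every coordinate has non-negative bias, $p_\mu$ is a product of non-decreasing marginals and therefore monotone. The no-instance draws $\mu \sim \cB^{\otimes n}$; a Chernoff bound shows that with high probability $\Omega(\sqrt{n})$ coordinates satisfy $\mu_i = -\eps/\sqrt{n}$, and a standard TV-to-distance-to-monotone calculation then yields distance $\Omega(\eps)$ from any monotone distribution.

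Next I would bound the TV distance between the two $T$-sample mixtures. Under both ensembles the coordinates of the samples remain independent after marginalizing over $\mu$, so the global $T$-sample distribution factors as a product over the $n$ coordinates of a common per-coordinate mixture $M_\cA$ (resp.\ $M_\cB$). Sub-multiplicativity of $\chi^2$ for product measures reduces the task to showing $\chi^2(M_\cA, M_\cB) = O(1/n)$. Expanding $M_\cA(s) - M_\cB(s)$ in the Fourier basis over $s \in \{-1,1\}^T$ rewrites it as a sum over subsets $A \subseteq [T]$ of the matched-moment differences $\mathbb{E}_\cA[\mu^{|A|}] - \mathbb{E}_\cB[\mu^{|A|}]$; since the first $k$ moments coincide, only terms with $|A| > k$ survive. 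Bounding these using the concentration of $\cA$ and $\cB$ near zero gives a residual on the order of $\sum_{j > k} \binom{T}{j}(\eps/\sqrt{n})^{2j}$, which is $O(1/n)$ precisely when $T = \tilde{o}(n/\eps^2)$ and $k = \Theta(\log n / \log \log n)$; this is the source of the logarithmic slack absorbed by the $\tilde{\Omega}$ notation.

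The main obstacle I anticipate is the moment-matching construction itself: producing $\cA$ and $\cB$ with $\Theta(\log n / \log \log n)$ matched moments while maintaining the support constraints (in particular $\cA \subseteq [0,1]$ and $\cB$ placing constant mass at $-\eps/\sqrt{n}$) is a delicate classical problem typically handled through Chebyshev systems or a Gauss-type quadrature; forcing $\cA$ to lie on the non-negative half-line is precisely what makes the yes-instance automatically monotone, but also what makes matching this many moments nontrivial. Once that construction is in hand and the per-coordinate chi-squared tail estimate is tracked carefully through the tensorization, combining everything with the Yao-minimax reduction yields the desired $\tilde{\Omega}(n/\eps^2)$ lower bound.
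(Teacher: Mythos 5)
Your overall plan matches the paper's: invoke Yao's method, construct product-distribution instances whose per-coordinate means are drawn i.i.d.\ from a moment-matched pair of one-dimensional laws, and use the observation (Lemma~\ref{lem:tree-to-iid}) that on product distributions subcube conditioning is no more powerful than i.i.d.\ sampling. One slip in the far-from-monotone step: since $\cB$ places \emph{constant} probability on the negative bias, a Chernoff bound gives $\Theta(n)$ --- not $\Omega(\sqrt{n})$ --- negatively biased coordinates. This matters quantitatively, because the distance to monotonicity the construction achieves scales roughly like $(\eps/\sqrt{n})\cdot\sqrt{|N|}$ where $N$ is the set of negatively biased coordinates, so $|N| = \Theta(\sqrt{n})$ would give only $\eps/n^{1/4}$ rather than $\Omega(\eps)$. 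You also correctly flag the moment-matching construction as the crux; the paper realizes it (Lemma~\ref{lem:one-dim-dist}, via Claim~\ref{clm:inv-and-z}) by solving a Vandermonde-type linear system with nodes at $j^3$ for $j \in [\ell]$, chosen precisely so that the solution has $\ell_1$-norm $O(1)$ and can be normalized into two legitimate probability vectors; the cubically spaced nodes are what produce the $\ell^3$ support bound and, downstream, the polylogarithmic loss hidden in the $\tilde\Omega$.

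Where you genuinely diverge is the indistinguishability step, and there is a gap there. The paper reduces to the sufficient statistic of per-coordinate counts, defines a ``good'' set $G$ of typical count vectors (Lemma~\ref{lemma:hehe}), and Taylor-expands the pointwise likelihood ratio in $\mu$ through degree $\ell$ on $G$ (Lemma~\ref{lemma:haha}), with the outside of $G$ negligible under both ensembles. You instead propose $\chi^2$-tensorization followed by a Fourier (moment) expansion of the per-coordinate mixture difference $M_{\cA} - M_{\cB}$. The resulting target $\sum_{j>k}\binom{T}{j}(\eps/\sqrt{n})^{2j}$ is the right quantity, but Parseval controls only $\|M_{\cA} - M_{\cB}\|_2^2$; to convert this into $\chi^2(M_{\cA}\,\|\,M_{\cB}) = \sum_s (M_{\cA}(s)-M_{\cB}(s))^2/M_{\cB}(s)$ you additionally need control on the denominator $M_{\cB}(s)$, and this is not automatic: on heavily imbalanced strings $s$, $M_{\cB}(s)$ is not pointwise comparable to $2^{-T}$ without using the detailed support structure of $\cB$, and the relevant factor is $\exp(\pm T\alpha)$ with $\alpha = \eps\ell^3/\sqrt{n}$ the largest bias, which is not small over all $s$. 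The fix is the same truncation the paper performs: condition on $|\sum_t s_t| = O(\sqrt{T}\log n)$ before applying the Fourier/Parseval estimate, observing that both ensembles place $1 - o(1)$ mass on that event. With that modification, and the moment-matching construction actually in hand, your chi-squared route does go through; as written it leaves a real hole precisely at the step you describe as ``tracking the chi-squared tail estimate carefully.''
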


\subsection{Preliminaries} \label{sec:prelims}

Our lower bound proofs proceed by Yao's method. We consider a property of distribution $\calP$ that we want to test.
We describe two distributions, $\Dyes$ and $\Dno$, supported on \emph{product} distributions over $\{-1,1\}^n$ which, in addition, satisfy the following properties:
\begin{itemize}
\item Every distribution $p$ in $\Dyes$ is a distribution over $\{-1,1\}^n$ lies in $\calP$.
\item A distribution $\bp$ drawn from $\Dno$ is a distribution over $\{-1,1\}^n$ which is $\eps$-far from $\calP$ with probability at least $0.99$ (over the draw of $\bp \sim \Dno$).   
\end{itemize}
Consider any deterministic algorithm which can $\eps$-test monotonicity which makes $q$ queries. The algorithm is specified by a depth-$q$ decision tree of subcube conditioning queries; each non-leaf node of the decision tree specifies a subcube $\rho \in \{-1,1, *\}^n$ which is the query performed at that node, and has $2^{\stars(\rho)}$ many children, corresponding to the possible completions of $\rho$ that the algorithm would receive after such a query; each leaf is labeled ``accept'', or ``reject'' corresponding to the output. In the case input distributions $p$ are promised to be \emph{product} distributions, algorithms with subcube query access significantly simplify, allowing us to prove lower bounds against the \emph{sample complexity} of $\eps$-testing algorithms.

\begin{lemma}[Decision Tree to i.i.d Samples]\label{lem:tree-to-iid}
Let $p$ be a product distribution supported on $\{-1,1\}^n$, and $\Alg$ be a deterministic $q$-query algorithm with subcube conditioning access. There exists a function $\Alg' \colon \{-1,1\}^{nq} \to \{ \text{``accept''}, \text{``reject''}\}$ which exactly simulates the algorithm on independent samples, i.e.,
\begin{align*}
\Prx\left[ \Alg(p) = \text{``accept''} \right] = \Prx_{\bx_1,\dots, \bx_q \sim p}\left[ \Alg'(\bx_1,\dots, \bx_q) = \text{``accept''}\right].
\end{align*}
\end{lemma}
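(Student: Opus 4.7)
The plan rests on a simple observation: for a product distribution $p = p_1 \otimes \cdots \otimes p_n$ on $\{-1,1\}^n$ and any subcube $\rho \in \{-1,1,*\}^n$, the conditional distribution $p(\cdot \mid \rho)$ is the product law in which each coordinate $i$ with $\rho_i \neq *$ is fixed deterministically to $\rho_i$, while each $i$ with $\rho_i = *$ is drawn independently from the marginal $p_i$. Equivalently, if $\bx \sim p$ is a fresh sample drawn independently of $\rho$, then the string obtained from $\rho$ by overwriting each $*$-coordinate with the corresponding entry of $\bx$ is distributed exactly as a sample from $p(\cdot \mid \rho)$. This is the feature special to product distributions that the lemma is built around.

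Given this, I would define $\Alg'$ explicitly by simulation. On input $(\bx_1,\dots,\bx_q) \in \{-1,1\}^{nq}$, walk down the decision tree of $\Alg$: at the $k$-th internal node visited, read off the subcube $\rho^{(k)} \in \{-1,1,*\}^n$ (a deterministic function of the previous $k-1$ responses), form the response $\by^{(k)} \in \{-1,1\}^n$ by $\by^{(k)}_i := \rho^{(k)}_i$ if $\rho^{(k)}_i \neq *$ and $\by^{(k)}_i := (\bx_k)_i$ otherwise, and descend into the child indexed by $\by^{(k)}$. After $q$ steps, output the accept/reject label of the leaf reached. Coordinates of $\bx_k$ for which $\rho^{(k)}_i \neq *$ are simply discarded.

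Correctness would follow by induction on $k$, showing that the joint distribution of $(\rho^{(1)}, \by^{(1)}, \dots, \rho^{(k)}, \by^{(k)})$ produced by $\Alg'$ on $\bx_1,\dots,\bx_q \sim p$ agrees with that produced by $\Alg$ interacting with the genuine subcube-conditioning oracle for $p$. The inductive step is immediate from the product-distribution observation above, combined with the independence of $\bx_k$ from $\bx_1,\dots,\bx_{k-1}$ (and hence from $\rho^{(k)}$, which is determined by the earlier responses). The only subtlety worth verifying is that discarding the coordinates $(\bx_k)_i$ for which $\rho^{(k)}_i \neq *$ does not spoil the joint law; since those bits are independent of the bits that $\Alg'$ actually consumes, throwing them away is harmless. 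Because the output of the decision tree depends only on the sequence of responses, the two accept probabilities then coincide, completing the argument. I do not expect any genuinely hard step here — the content of the lemma is essentially the trivialization of subcube conditioning under product structure, packaged so that later sections can freely transfer sample-complexity lower bounds into subcube-query lower bounds.
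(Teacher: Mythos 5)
Your proposal is correct and matches the paper's proof essentially word for word: both simulate the decision tree by, at the $k$-th node with query $\rho^{(k)}$, overwriting the $*$-coordinates with the corresponding bits of the fresh sample $\bx_k$, and both rely on the same observation that for a product distribution $p_{|\rho}$ restricted to the free coordinates is just the product of marginals, so discarding the fixed coordinates of $\bx_k$ is harmless. The only difference is cosmetic — you spell out the inductive coupling argument that the paper leaves implicit.
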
\begin{proof}
We describe the function $\Alg' \colon \{-1,1\}^{nq} \to \{\text{``accept''}, \text{``reject''} \}$ by specifying how it computes on independent samples $\bx_1, \dots, \bx_q$ using the depth-$q$ decision tree $\Alg$. The algorithm maintains the current node $u$, initially set to the root of $\Alg$, and a counter $c$, initially set to $1$. If $u$ is a leaf, then we output either ``accept'' or ``reject'', according to the decision held at $u$. Otherwise, $u$ contains a string $\rho \in \{-1,1, *\}^n$ which specifies a subcube conditioning query. We update $u$ to be the child corresponding to the completion $(\bx_c)_{|\stars(\rho)} \in \{-1,1\}^{\stars(\rho)}$ of the sample $\bx_c$ and we increment $c$. We note that, since the distribution $p$ is a product distribution, $(\bx_c)_{|\stars(\rho)}$ is distributed as a draw from $p_{|\rho}$, and since the counter is incremented, all the used queries are mutually independent. Hence, this is a perfect simulation of subcube conditioning queries with i.i.d samples when the input distribution $p$ is product.
\end{proof}

In the following subsection, we describe the two distribution $\Dyes$ and $\Dno$; we prove these are monotone and far-from monotone distributions in Lemma~\ref{lem:mon-dist} and Lemma~\ref{lem:far-mon-dist}, respectively. 
As per Lemma~\ref{lem:tree-to-iid}, in order to prove a $q$-query lower bound, it suffices to show that for any function $\Alg \colon \{-1,1\}^{nq} \to \{\text{``accept''}, \text{``reject''} \}$, 
\begin{align}
\left| \Prx_{\substack{\bp \sim \Dyes \\ \bx_1,\dots, \bx_q \sim \bp}}\big[ \Alg(\bx_1,\dots, \bx_q) = \text{``accept''}\big] - \Prx_{\substack{\bp \sim \Dno \\ \bx_1,\dots, \bx_q \sim \bp}}\big[ \Alg(\bx_1,\dots, \bx_q) = \text{``accept''}\big] \right| = o(1). \label{eq:indistinguish}
\end{align}
In Section~\ref{sec:indist}, we prove that (\ref{eq:indistinguish}) holds for any $q$-sample algorithm $\Alg$ with $q$ smaller than $n / (\eps^2\cdot \polylog(n))$. This concludes the proof of Theorem~\ref{thm:mon-lb}.

\subsection{The one-dimensional mean distributions} \label{sec:mean-dist}

A distribution supported on product distributions on $\{-1,1\}^n$ may be equivalently specified by describing a distribution on vectors in $[-1,1]^n$, corresponding to the mean vectors of the distribution. 
In this section, we show the existence of one-dimensional distributions with special properties. In the next subsection, we use these
distributions to generate mean vectors, from which the hypercube distributions are constructed. The properties of the next
lemma are crucial for the main lower bound.

\begin{lemma}[One-Dimensional Bias Distributions]\label{lem:one-dim-dist}
Let $c_0$ be some absolute positive constant.
Fix any natural number parameter $\ell$. There exist two discrete distributions $\cA$ and $\cB$ with the following properties.
\begin{asparaitem}
	\item $\cA$ is supported on $\{0\} \cup [\ell^3]$ and $\cB$ is supported on $\{-1, 0\} \cup [\ell^3]$.
	\item For every $k \leq \ell$, $\EX_{i \sim \cA}[i^k] = \EX_{j \sim \cB}[j^k]$.
	\item $\Pr_{j \sim \cB}[j = -1] > c_0$.
\end{asparaitem}
\end{lemma}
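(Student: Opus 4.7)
My plan is to reformulate the lemma in the language of signed measures and then invoke LP duality combined with a classical Chebyshev extremal bound. Concretely, the existence of the desired $\cA$ and $\cB$ is equivalent to producing a signed measure $\mu$ supported on $\{-1,0\}\cup[\ell^3]$ which satisfies (i) $\sum_x \mu(x)\cdot x^k = 0$ for every $k \in \{0,1,\dots,\ell\}$, (ii) $\mu(-1) > 0$, and (iii) $\|\mu\|_1 \leq 2\mu(-1)/c_0$ for an absolute constant $c_0 > 0$. Given such $\mu$, I decompose $\mu = \mu_+ - \mu_-$; the vanishing zeroth moment forces $\|\mu_+\|_1 = \|\mu_-\|_1 = \|\mu\|_1/2$, the positivity of $\mu(-1)$ confines the mass at $-1$ entirely to $\mu_+$, and the probability distributions $\cB := 2\mu_+/\|\mu\|_1$ and $\cA := 2\mu_-/\|\mu\|_1$ have the required supports, matching moments for $k = 1,\dots,\ell$ (from (i)), and $\cB(-1) = 2\mu(-1)/\|\mu\|_1 \geq c_0$ (from (iii)).

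To produce such a $\mu$, I would normalize $\mu(-1) = 1$ and minimize $\|\mu\|_1$ subject to the moment conditions. By standard LP duality, this minimum equals $1 + E$, where
\[ E \;:=\; \max\!\big\{\,|q(-1)|\,:\, q \in \Pi_\ell,~ |q(x)| \leq 1~\forall x \in \{0\}\cup[\ell^3]\,\big\}, \]
and $\Pi_\ell$ denotes the set of univariate polynomials of degree at most $\ell$. The dual variables $\lambda_0,\dots,\lambda_\ell$ naturally correspond to the coefficients of a polynomial $q(x) = \sum_k \lambda_k x^k$; the objective $\sum_k(-1)^{k+1}\lambda_k$ simplifies to $-q(-1)$, and the primal boundedness conditions translate to $|q(x)|\leq 1$. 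The lemma then reduces to showing $E = O(1)$ independent of $\ell$, since then I may set $c_0 := 2/(1+E)$.

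To bound $E$, I use two standard tools. First, I upgrade the discrete $\ell^\infty$-bound to a continuous one via Markov's polynomial inequality: for $q \in \Pi_\ell$, writing $P := \max_{[0,\ell^3]}|q|$, Markov gives $\max_{[0,\ell^3]}|q'| \leq (2\ell^2/\ell^3)\,P = (2/\ell)\,P$; since every real point in $[0,\ell^3]$ lies within distance $1/2$ of an integer, the hypothesis $|q(x)| \leq 1$ on the integer grid yields $P \leq 1 + P/\ell$, and so $P \leq 2$ for $\ell \geq 2$. Second, I apply the classical Chebyshev extremum: for any $q \in \Pi_\ell$ with $|q(x)| \leq P$ on $[0,\ell^3]$, one has $|q(-1)| \leq P \cdot T_\ell(1 + 2/\ell^3)$, and using $T_\ell(\cosh\theta) = \cosh(\ell\theta)$ together with $\cosh\theta \geq 1 + \theta^2/2$, the equation $\cosh\theta = 1 + 2/\ell^3$ gives $\theta \leq 2\ell^{-3/2}$, whence $T_\ell(1 + 2/\ell^3) \leq \cosh(2/\sqrt{\ell}) \leq \cosh(2)$. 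Putting the two steps together, $E \leq 2\cosh(2) < 8$, so I can take $c_0$ to be any constant below $2/9$.

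The main obstacle I anticipate is the discrete-to-continuous conversion, and the specific choice of support size $\ell^3$ in the lemma is exactly what makes Markov's inequality absorb only a constant-factor loss at that step; any support size that is $\omega(\ell^2)$ would suffice, and $\ell^3$ is a convenient polynomial choice. Once that step is handled, the argument is a direct chaining of LP duality with the classical Chebyshev bound, and the constants are absolute.
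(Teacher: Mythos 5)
Your proof is correct, and it takes a genuinely different route from the one in the paper. The paper's argument sets up the explicit linear system $Az = e_1$ whose columns are Vandermonde-type vectors in the points $-1, 1^3, \dots, \ell^3$, solves for $z$ via Cramer's rule, expands the resulting ratio of Vandermonde determinants, and bounds $\|z\|_1 = O(1)$ by a direct product estimate (following a calculation in \cite{CJLW21b}). Your argument instead passes to the dual LP and reduces the $O(1)$ bound on $\|z\|_1$ to an extremal problem for degree-$\ell$ polynomials bounded on the discrete grid $\{0,\dots,\ell^3\}$: Markov's inequality upgrades the grid bound to a uniform bound $P \leq 2$ on the interval $[0,\ell^3]$ (with only a $(1-1/\ell)^{-1}$ loss precisely because the support has size $\ell^3 = \omega(\ell^2)$), and then the Chebyshev extremal bound controls the extrapolation to $x=-1$, giving $E \leq P\cdot T_\ell(1+2/\ell^3) \leq 2\cosh 2$. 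Both proofs are sound; yours is more conceptual and makes it transparent why the $\ell^3$ support size (and not, say, $\ell^2$) is needed, whereas the paper's is more elementary/self-contained and directly reuses machinery it already invokes for the indistinguishability lemma. One cosmetic note: your decomposition $\mu = \mu_+ - \mu_-$ implicitly requires $\mu(-1)>0$ to confine the mass at $-1$ to $\mu_+$; this is guaranteed since you normalize $\mu(-1)=1$. Otherwise the chain of implications — strong LP duality (both programs finite-dimensional, the primal feasible by a Vandermonde argument, and bounded below by $0$), the Markov bootstrap $P \le 1 + P/\ell$, and the $T_\ell(\cosh\theta)=\cosh(\ell\theta)$ estimate — all check out.
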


The first step of the proof is to construct a solution to a linear system that captures the second point above (the equality of moments).
For convenience, let $\alpha_j= j^3$ for each $j\in [\ell]$.
Consider the following $(\ell+1) \times (\ell+1)$ matrix $A$:
\begin{align*}
A := \left[ \begin{array}{ccccc} 1 & 0 & 0 & \dots & 0 \\ 
				       1 & 1 & 1 & \dots & 1 \\
				       -1 & \alpha_1^1 & \alpha_2^1 & \dots & \alpha_{\ell}^1 \\
				       (-1)^2 & \alpha_1^2 & \alpha_2^2 & \dots & \alpha_{\ell}^2 \\
				       \vdots & \vdots & \vdots & \vdots & \vdots \\
				       (-1)^{\ell-1} & \alpha_1^{\ell-1} & \alpha_2^{\ell-1} & \dots & \alpha_{\ell}^{\ell-1} \end{array} \right].
\end{align*}

\begin{claim}\label{clm:inv-and-z}
The matrix $A$ 
is invertible and the vector $z \in \R^{\ell+1}$ satisfying $A z = e_1$ has $\|z\|_1 = O(1)$.
\end{claim}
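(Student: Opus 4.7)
The plan is to recognize $A$ as a bordered Vandermonde matrix, extract a closed form for $z$ via Lagrange interpolation, and then bound the resulting product. First I would show invertibility by observing that rows $2$ through $\ell+1$ of $A$ form an $\ell \times (\ell+1)$ Vandermonde-type matrix in the $\ell+1$ pairwise-distinct nodes $u_1 := -1$ and $u_j := (j-1)^3$ for $j = 2, \dots, \ell+1$; this block has rank $\ell$, with one-dimensional kernel spanned by $w$ where $w_j = \prod_{i \neq j}(u_j - u_i)^{-1}$. Since $w_1 \neq 0$ and the first row of $A$ is $e_1^\top$, the first row is linearly independent from the rest, so $A$ has full rank.

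Solving $Az = e_1$ then gives $z = w/w_1$, i.e.\ the Lagrange-weight expression $z_j = \prod_{i \neq 1}(u_1 - u_i)/\prod_{i \neq j}(u_j - u_i)$. After the common factor $1 + k^3$ cancels in numerator and denominator, this yields for $k = 1, \dots, \ell$,
\[ |z_{k+1}| \;=\; \prod_{m=1,\, m \neq k}^{\ell} \frac{1 + m^3}{|k^3 - m^3|}. \]
To bound $\sum_k |z_{k+1}|$, I would use the factorizations $1 + m^3 = (1+m)(m^2 - m + 1)$ and $k^3 - m^3 = (k-m)(k^2 + km + m^2)$ to split $|z_{k+1}| = T_1(k) \cdot T_2(k)$, where $T_1(k) = \prod_{m \neq k}(1+m)/|k-m|$ and $T_2(k) = \prod_{m \neq k}(m^2 - m + 1)/(k^2 + km + m^2)$. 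A short telescoping computation gives $T_1(k) = k\binom{\ell+1}{k+1}$, which grows binomially in $k$. The decay of $T_2(k)$ must compensate: for $m \ll k$ each factor is roughly $(m/k)^2$, so the product over $m = 1, \dots, k-1$ is bounded by $((k-1)!)^2/k^{2(k-1)} = O(e^{-2k} \cdot \mathrm{poly}(k))$ by Stirling; for $m > k$ each factor is less than $1$ (for $m \in (k, 2k]$ it is at most $2/3$, and for $m > 2k$ it is $1 - (k+1)/m + O(k^2/m^2)$), which contributes an additional factor of at most $(Ck/\ell)^{k+1}$ for some absolute constant $C$. Combining with the Stirling estimate $T_1(k) \leq k(\ell e/(k+1))^{k+1}/\sqrt{2\pi(k+1)}$ collapses the $\ell$-dependence and yields $|z_{k+1}| \leq C' \rho^k$ for absolute constants $C' > 0$ and $\rho \in (0, 1)$; summing the geometric tail gives $\|z\|_1 = 1 + \sum_{k=1}^\ell |z_{k+1}| = O(1)$ independently of $\ell$.

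The main obstacle is making the cancellation between the binomial growth of $T_1(k)$ and the decay of $T_2(k)$ quantitatively tight while keeping it uniform in $\ell$; naive slack such as $k^2 + km + m^2 \geq (k+m)^2/3$ accumulates a spurious factor of $3^\ell$ across the product and is too lossy, so one must handle the ranges $m < k$ and $m > k$ separately with the sharper Stirling estimates above. If the direct bookkeeping proves unwieldy, a backup is to use the sign pattern $\mathrm{sign}(z_{k+1}) = (-1)^k$ (obtained by counting negative factors in the Lagrange denominator) together with the row-$2$ identity $\sum_{k=1}^\ell (-1)^k |z_{k+1}| = -1$ and a monotonicity argument on $(|z_{k+1}|)_k$ (verifiable by tracking the ratio $|z_{k+2}|/|z_{k+1}|$) to control $\|z\|_1$ via alternating-series bookkeeping.
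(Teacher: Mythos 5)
Your proposal arrives at the same product form for $|z_{k+1}|$ as the paper (in the paper's $0$-indexed notation, $|z_i| = \prod_{j\neq i}(\alpha_j+1)/|\alpha_j-\alpha_i|$ with $\alpha_j = j^3$), but via a slightly different route: you establish invertibility by exhibiting the one-dimensional kernel of the $\ell\times(\ell+1)$ Vandermonde block and observing it has a nonzero first coordinate, whereas the paper applies Cramer's rule and ratios of Vandermonde determinants directly. These are equivalent ways to extract the Lagrange weights. The substantive difference is in the $\|z\|_1 = O(1)$ bound. The paper peels off the $(1+1/\alpha_j)$ factors (which cost only $\exp(\sum 1/j^3) = O(1)$) and then \emph{outsources} the remaining product bound $\sum_i \prod_{j\neq i}\alpha_j/|\alpha_j-\alpha_i| = O(1)$ to Claim~6.3 of~\cite{CJLW21b}. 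You instead propose a self-contained argument by factoring $1+m^3 = (1+m)(m^2-m+1)$ and $k^3-m^3 = (k-m)(k^2+km+m^2)$, splitting $|z_{k+1}| = T_1(k)T_2(k)$ with the linear part telescoping to $T_1(k) = k\binom{\ell+1}{k+1}$ (I verified this identity) and a Stirling estimate for $T_2(k)$. Your balance does close: the $\ell$-dependence in $(\ell e/(k+1))^{k+1}$ from $T_1$ cancels against $(2k/\ell)^{k+1}$ from the $m>2k$ range of $T_2$, leaving a residual of order $k^2(4/(3e))^k$, which is summable since $4/(3e)<1$. You flag that the bookkeeping must be uniform in $\ell$, and indeed one small case you should handle explicitly is $k > \ell/2$ (where no $m>2k$ range exists); there the argument is in fact easier because $\binom{\ell+1}{k+1}$ is itself small and the $e^{-2k}$ factor from $m<k$ dominates. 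So your main plan is correct and more self-contained than the paper's, at the cost of more computation.

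One caveat on the fallback you mention at the end: the alternating-series backup does not work as stated. Knowing $\sum_k (-1)^k|z_{k+1}| = -1$ together with monotonicity of $(|z_{k+1}|)_k$ bounds the oscillation of partial sums of the alternating series, but gives no control on $\sum_k |z_{k+1}|$ (take $|z_{k+1}| = 1/k$ as a counterexample: the alternating sum converges while the absolute sum diverges). Since your primary argument stands on its own, this does not affect the proposal, but the fallback should be discarded.
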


\begin{proof} It is convenient to index the rows/columns of $A$ with $0,1,2,\ldots,\ell$.
The proof of Claim~\ref{clm:inv-and-z} mostly follows similar calculations  in the proof of Claim~6.3 of~\cite{CJLW21b}. Let $V$ denote the transpose of the $\ell \times \ell$ Vandermonde matrix on $\alpha_1,\dots, \alpha_{\ell}$, so we may substitute for the determinant
\[ \det(A ) = \det(V ) = \prod_{\substack{i,j \in [\ell] \\ i < j}} (\alpha_j - \alpha_i) \neq  0,\]
as long as $\alpha_1,\dots, \alpha_{\ell}$ are distinct, and this means that $A$ is invertible. 

We now compute $z$ using Cramer's rule: $z_0=1$ and for each $i\in [\ell]$, $z_i$ is given by
\begin{align*}
z_i = \dfrac{\det(A_i)}{\det(A)} = (\pm 1)\cdot \dfrac{\det(V(-1; \alpha_{-i}))}{\det(V )}.
\end{align*}
The numerator $\det(A_i)$ denotes the determinant of the matrix where the $i$-th column of $A$ is replaced by $e_1$. Then, $\det(A_i)$ is the determinant of the Vandermonde matrix of the entries $-1$, and well as $\alpha_1,\dots, \alpha_{\ell}$ except for $\alpha_i$, which we refer  to as $V(-1;\alpha_{-i})$ above. By the formula of the determinant for Vandermonde matrices, we have 
\begin{align*}
\det\big(V(-1; \alpha_{-i})\big) &= \prod_{j \in [\ell] \setminus \{i \}} (\alpha_j + 1) \cdot \prod_{\substack{j_1, j_2 \in [\ell] \setminus \{i\} \\ j_1 < j_2}} (\alpha_{j_2} - \alpha_{j_1}) \qquad\text{and}\\[0.6ex]
\det(V ) &= \prod_{j > i} (\alpha_{j} - \alpha_i) \prod_{j < i} (\alpha_i - \alpha_j) \cdot \prod_{\substack{j_1, j_2 \in [\ell] \setminus \{i\} \\ j_1 < j_2}} (\alpha_{j_2} - \alpha_{j_1}),
\end{align*}
and $|z_i|$ is given by 
\begin{align*}
|z_i| = \prod_{j \in [\ell] \setminus \{i \}} \dfrac{\alpha_j + 1}{|\alpha_j - \alpha_i|} = \prod_{j \in [\ell] \setminus \{i\}} \frac{\alpha_j}{|\alpha_j - \alpha_i|} \left( 1 + \frac{1}{\alpha_j} \right) = \prod_{j \in [\ell] \setminus \{i\}} \frac{\alpha_j}{\alpha_j - \alpha_i} \prod_{j \in [\ell] \setminus \{i\}} \left( 1 + \frac{1}{\alpha_j} \right),
\end{align*}
and therefore,
\begin{align*}
    |z_i| \leq \left| \prod_{j \in [\ell]\setminus\{i\}} \frac{\alpha_j}{\alpha_j - \alpha_i} \right| \cdot \exp\left( \sum_{j=1}^{\ell} \frac{1}{j^3} \right) \leq O(1) \cdot \left| \prod_{j \in [\ell]\setminus\{i\}} \frac{\alpha_j}{\alpha_j - \alpha_i} \right|.
\end{align*}
Finally, the bound on the $\|z\|_1$ follows from summing up the right-most expression above, which is $O(1)$, with the exact derivation in Claim~6.3 of~\cite{CJLW21b}.
\end{proof}

\begin{proof} (of Lemma~\ref{lem:one-dim-dist}) Consider the vector $z \in \R^{\ell+1}$ from \Clm{inv-and-z}.
For convenience we index $z$ using $0, \ldots,\ell$. Note that $z_0 = 1$. Let $N \subset [\ell]$ be the 
set of coordinates that are negative, and $P \subseteq [\ell]$ be the set of positive coordinate. (We do 
not put the index $0$ in $P$, since we treat $z_0 = 1$ separately.

The distribution $\cA$ is supported on $\{i^3 | i \in N\}$. The probability of $i^3$ is $z_i/\|z\|_1$,
and the probability of $0$ is $1 - \sum_{i \in N} z_i/\|z\|_1$.

The distribution $\cB$ is supported on $\{-1\} \cup \{j^3 | j \in P\}$. The probability of $-1$ is $z_0/\|z\|_1$,
the probability of $j^3$ is $z_j/\|z\|_1$, and the probability of $0$ is the remainder $1 - z_0/\|z\|_1 - \sum_{j \in P} z_j/\|z\|_1$.

The first bullet of the lemma holds by the above construction. For the second lemma, consider the row of the matrix $A$
with the $k$th powers. Since $Az = 0$, that row leads to the equation $(-1)^k z_0 + \sum_{j \in P} \alpha^k_j z_j = \sum_{i \in N} \alpha^k_i z_i$.
Dividing by $\|z\|_1$, we deduce that $\EX_{j \sim \cB}[j^k] = \EX_{i \sim \cA} [i^k]$.
Finally, since $\|z\|_1 = O(1)$, $\Pr_{j \sim \cB}[j = -1] = z_0/\|z\|_1 = 1/\|z\|_1 = \Omega(1)$.
\end{proof}

\subsection{The distributions $\cD_{yes}$ and $\cD_{no}$} \label{sec:dist}

Given any ``mean" vector $\bmu = (\bmu_1, \bmu_2, \ldots, \bmu_n) \in [-1,1]^n$, we can define a product distribution
on $\{-1,1\}^n$ as follows. We set each coordinate $x_i$ to be $1$ with probability $(1+\bmu_i)/2$ and zero 
with probability $(1-\bmu_i)/2$. Note that $\EX[x_i] = \bmu_i$.

Recall the distributions $\cA$ and $\cB$ given in \Lem{one-dim-dist}. 
We set $\ell = \log n/\log\log n$.

We now define $\Dyes$ and $\Dno$, which are distributions over 
distributions on $\{-1,1\}^n$. For $\Dyes$,
we generate $n$ independent entries $(a_1, a_2, \ldots, a_n)$ from $\cA$. 
We applying a scaling to define the mean vector $\bmu$ where $\bmu_i = \eps a_i/\sqrt{n}$.
We then take the corresponding distribution over $\{-1,1\}^n$. This describes a ``draw"
from $\cD_{yes}$.
The distribution $\cD_{no}$ is generated analogously from $\cB$.

Observe that all the coordinates means for $\Dyes$ are non-negative, while
a constant fraction of the corresponding means for $\Dno$ are negative.

%
%
%
\begin{lemma}\label{lem:mon-dist}
Every distribution in the support of $\Dyes$ is monotone.  
\end{lemma}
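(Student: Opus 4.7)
The plan is to verify this directly from the construction of $\Dyes$. By \Lem{one-dim-dist}, the distribution $\cA$ is supported on $\{0\} \cup [\ell^3]$, so every draw $\ba_i \sim \cA$ is non-negative. Consequently, every coordinate of the random mean vector $\bmu$ drawn according to $\Dyes$ satisfies $\bmu_i = \eps \ba_i / \sqrt{n} \geq 0$. I would also briefly confirm that each $\bmu_i \in [0,1]$ so that the corresponding product distribution on $\{-1,1\}^n$ is well-defined: since $\ba_i \leq \ell^3 = (\log n / \log\log n)^3$ and $\eps \leq 1$, we have $\bmu_i \leq \ell^3/\sqrt{n}$, which is less than $1$ for all sufficiently large $n$.

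The core step is the deterministic statement that any product distribution on $\{-1,1\}^n$ whose coordinate means are all non-negative is monotone. I would fix any realization of $\bmu$, write the resulting product distribution as $p(x) = \prod_{i=1}^n q_i(x_i)$ with $q_i(1) = (1+\bmu_i)/2$ and $q_i(-1) = (1-\bmu_i)/2$, and observe that $\bmu_i \geq 0$ gives $q_i(-1) \leq q_i(1)$ for every $i$. Given any pair $x \preceq y$, the two points can only differ in coordinates where $x_i = -1$ and $y_i = 1$; on those coordinates the factor strictly (or weakly) increases from $q_i(-1)$ to $q_i(1)$, while on all other coordinates the factors are identical. Multiplying these inequalities coordinate by coordinate yields $p(x) \leq p(y)$, which is exactly the definition of a monotone distribution.

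There is no real obstacle here — the lemma is essentially a sanity check that the construction does what it is designed to do, and the argument is a one-line consequence of the support of $\cA$ together with the elementary fact that positively biased product distributions are monotone. The only thing worth being careful about is the implicit assumption that $n$ is large enough (relative to $\eps$ and $\ell$) so that the means lie in $[0,1]$; this is inherited from the overall regime of the lower bound, where $\ell = \log n/\log\log n$ and $\eps \in (0,1)$.
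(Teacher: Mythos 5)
Your proof is correct and follows essentially the same route as the paper: it observes that $\cA$ is supported on non-negative values, hence every coordinate mean is non-negative, and then uses the elementary fact that a product distribution with all non-negative biases is monotone. The extra check that each $\bmu_i$ lies in $[0,1]$ is a reasonable sanity check but not a substantive addition.
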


\begin{proof} Consider $\cD \sim \Dyes$. Since the mean vector is non-negative, for each coordinate $x_i$,
$\Pr[x_i = 1] \geq \Pr[x_i = -1]$. Since $\cD$ is a product distribution, this means that the probability
of a point cannot decrease if we flip a $-1$ (coordinate) to a $1$. Hence, $\cD$ is monotone.
\end{proof}

\begin{lemma}\label{lem:far-mon-dist}
A distribution $\bp \sim \Dno$ is $\Omega(\eps)$-far from monotone with probability at least $0.99$.
\end{lemma}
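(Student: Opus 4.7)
The plan is to reduce the distance-to-monotone question to a one-coordinate-slice question by exploiting the product structure of $\bp$ and then to prove an $\Omega(\eps)$ lower bound via a down-set / Hamming-weight comparison.

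First, by Lemma~\ref{lem:one-dim-dist}, $\Pr_{b \sim \cB}[b=-1] \geq c_0$, so with probability at least $0.99$ (by a Chernoff bound over the $n$ independent draws $b_1,\dots,b_n$ defining $\bp$), the set $\bS \eqdef \{i\in[n] : b_i=-1\}$ satisfies $|\bS| \geq c_0 n/2$. On this event, for $i \in \bS$ the coordinate mean is $-\delta$ with $\delta \eqdef \eps/\sqrt{n}$, whereas for $i \notin \bS$ the mean is non-negative (since $\cB$ is supported on $\{-1,0\}\cup[\ell^3]$). I will condition on this high-probability event throughout and set $m \eqdef |\bS| = \Theta(n)$.

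Next, I claim that the $\bS$-marginal $q_{\bS}$ of any monotone distribution $q$ on $\{-1,1\}^n$ is itself monotone on $\{-1,1\}^{\bS}$: for $x_{\bS} \preceq x_{\bS}'$ and each fixing $y$ of the coordinates outside $\bS$, monotonicity of $q$ gives $q(x_{\bS},y)\le q(x_{\bS}',y)$, and summing over $y$ yields $q_{\bS}(x_{\bS})\le q_{\bS}(x_{\bS}')$. The data processing inequality for total variation then gives $\dtv(\bp,q)\ge \dtv(\bp_{\bS},q_{\bS})$, where $\bp_{\bS}$ is the product distribution on $\{-1,1\}^{\bS}$ with \emph{all} coordinate means equal to $-\delta$. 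It therefore suffices to show $\dtv(\bp_{\bS},q_{\bS})=\Omega(\eps)$ for every monotone $q_{\bS}$ on $\{-1,1\}^{\bS}$.

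For this last step I will use the following standard monotone mass bound: for any monotone distribution $r$ on $\{-1,1\}^m$ and any down-set $D$, $r(D)\le |D|/2^m$. (Quick induction on $m$, splitting on the last coordinate: if $A=\Pr_r[x_m=-1]\le 1/2\le B=\Pr_r[x_m=1]$ and $U^{-1}\subseteq U^{+1}$ are the corresponding slices of the complementary up-set $U$, the inductive step reduces to $(2B-1)(|U^{+1}|-|U^{-1}|)\ge 0$.) Apply this with $D\eqdef\{y \in \{-1,1\}^{\bS}: w(y)\le m/2\}$, where $w(y)$ denotes the number of $+1$'s; this is a down-set with $|D|/2^m = 1/2 + O(1/\sqrt m)$, hence $q_{\bS}(D)\le 1/2+O(1/\sqrt m)$. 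Under $\bp_{\bS}$, $w$ is a sum of independent Bernoullis of parameter $(1-\delta)/2$, so $\EX[w]=m/2-m\delta/2$ and $\operatorname{Var}(w)=\Theta(m)$; a Berry--Esseen estimate gives $\bp_{\bS}(D)=\Pr[w\le m/2]=1/2+\Omega(\delta\sqrt m)-O(1/\sqrt m)$ as long as $\delta\sqrt m = O(1)$, which holds because $\delta\sqrt m = \eps\sqrt{m/n}=\Theta(\eps)$. Combining the two inequalities yields $\dtv(\bp_{\bS},q_{\bS})\ge \bp_{\bS}(D)-q_{\bS}(D) = \Omega(\delta\sqrt m)=\Omega(\eps)$, as desired.

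The main obstacle is extracting the $\Omega(\delta\sqrt m)$ excess of $\bp_{\bS}(D)$ above $1/2$ with sharp enough constants that it dominates the $O(1/\sqrt m)$ slack in the down-set bound for $q_{\bS}(D)$; this is a routine Berry--Esseen (or local CLT) calculation on a shifted binomial, and it is valid in the meaningful regime $\eps \gtrsim 1/\sqrt n$ of the query lower bound $\tilde\Omega(n/\eps^2)$. Everything else is either a Chernoff bound, the data processing inequality, or the monotone-mass inductive lemma sketched above.
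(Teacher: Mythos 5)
Your approach is genuinely different from the paper's and, modulo one quantitative step, correct. The paper conditions on the event $|\bN|\ge c_0 n/2$ and then constructs an explicit matching $\calM$ of chain-comparable pairs (via a bijection from the bottom half to the top half of $\{-1,1\}^{\bN}$), lower bounds $\sum_{(x,y)\in\calM}(\bp(x)-\bp(y))$ by a moment of $(m/2-|\bx_{\bN}|)^+$, and converts a large matching excess into TV distance via a per-pair triangle inequality. You instead marginalize to $\bN$ (using that marginals of monotone distributions are monotone, plus data processing), and then reduce everything to a single very clean fact: a monotone distribution on $\{-1,1\}^m$ never assigns more than the uniform mass $|D|/2^m$ to a down-set $D$. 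This down-set bound (your induction is correct, since $\Pr_r[x_m=-1]\le 1/2$ and the $+1$-slice of a down-set contains the $-1$-slice) is a nice structural lemma that the paper does not use, and your route would also prove the far-from-uniform Lemma~\ref{lem:distancelemma} in one line.

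The gap is in the last quantitative step, and it is not a ``constants'' issue as you suggest. You estimate $\bp_{\bS}(D)$ and $2^{-m}|D|$ each via a Gaussian approximation with additive error $O(1/\sqrt m)$, and then subtract; this only yields $\Omega(\delta\sqrt m)-O(1/\sqrt m)$, which is $\Omega(\eps)$ iff $\eps\gtrsim 1/\sqrt n$. But the lemma (and the paper's Theorem~\ref{thm:mon-lb}) is stated for all $\eps\in(0,1)$, and indeed $\eps<1/\sqrt n$ is a legitimate regime for the $\tilde\Omega(n/\eps^2)$ lower bound (it remains below the trivial $2^n$ upper bound). Berry--Esseen is simply too lossy to compare two binomial CDFs to within $o(1/\sqrt m)$; each comparison to $\Phi$ already incurs $\Theta(1/\sqrt m)$ error, which is the same order as the middle-layer mass. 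The fix is to compare the two binomial CDFs \emph{directly}: writing $g(\delta)=\Pr_{\Bin(m,(1-\delta)/2)}[w\le m/2]$ and using the identity $\tfrac{d}{dp}\Pr_{\Bin(m,p)}[w\le k]=-m\binom{m-1}{k}p^k(1-p)^{m-1-k}$, one gets $g'(t)=\Theta(\sqrt m)$ uniformly for $t\in[0,O(1/\sqrt m)]$, so $\bp_{\bS}(D)-2^{-m}|D|=g(\delta)-g(0)=\Theta(\delta\sqrt m)=\Theta(\eps)$ for every $\eps\le 1$, no Gaussian approximation needed. With that substitution your argument proves the lemma for the full range of $\eps$, matching the paper's.
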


\begin{proof}
Let $\bp \sim \Dno$ and let $\bmu = (\bmu_1,\dots, \bmu_n)$ be the corresponding mean vector, where $\bmu_i\sim \calD_n$ for all $i \in [n]$. 
Let $\bN \subset [n]$ denote the coordinates $i \in [n]$ where $\bmu_i = -\eps / \sqrt{n}$. From the third item of Lemma~\ref{lem:one-dim-dist} and standard concentration inequalities, \smash{$|\bN| \eqdef m$} has size at least $c_0 n / 2$ with probability $1-o(1)$. We assume this is the case and that $m$ is even (so $\bN$ is a large subset of coordinates where $\bmu_i$ is negative). We will lower bound 
the distance to monotonicity by showing that there exists a matching $\calM$ of pairs $(x, y)$ in $\{-1,1\}^n \times \{-1,1\}^n$ (which depends on $\bp$) where $x_i \leq y_i$ for every $i \in [n]$ and
\begin{align*}
\sum_{(x,y) \in \calM} ( \bp(x) - \bp(y) ) = \Omega(\eps).
\end{align*}
Assuming this bound, we prove that $\|\bp - g\|_1 = \Omega(\eps)$ for any monotone $g$, which
implies that $\dtv(\bp, g)\ge \Omega(\eps)$. Consider $(x,y) \in \calM$.
Suppose $\max(|\bp(x) - g(x)|, |\bp(y) - g(y)|) < (\bp(x) - \bp(y))/2$.
Then $g(x) > \bp(x) - (\bp(x) - \bp(y))/2 = (\bp(x) + \bp(y))/2$.
And $g(y) < \bp(y) + (\bp(x) - \bp(y))/2 = (\bp(x) + \bp(y))/2$. So $g(x) > g(y)$,
contradicting the fact that $g$ is monotone. Hence, $|\bp(x) - g(x)| + |\bp(y) - g(y)| \geq (\bp(x) - \bp(y))/2$.
Summing over all pairs $(x,y) \in \calM$, $\|\bp - g\|_1 \geq \sum_{(x,y) \in \calM} ( \bp(x) - \bp(y) ) = \Omega(\eps)$.

%
%
In order to describe the matching $\calM$, we first let $|z|$ denote the number of entries in $z \in \{-1,1\}^m$ which are set to $1$. We consider a bijection $\sigma$ which maps vectors $z \in \{-1,1\}^m$ with $|z| = m / 2 - r$ to $\sigma(z) \in \{-1,1\}^m$ with $|\sigma(z)| = m/2 + r$ for every $r \in [m/2]$ and satisfies $z \preceq \sigma(z)$. In other words, $\sigma$ maps the bottom-half of the hypercube $\{-1,1\}^m$ to a comparable element in the top half; the fact such matchings exist follows straight-forwardly from chain decompositions of $\{-1,1\}^m$. Let
\begin{align*}
\calM = \left\{ (x, x_{\ol{\bN}}\sigma(x_{\bN}) : x_{\bN} \in \{-1,1\}^m \text{ contains at most $m/2$ entries set to $1$}\right\},
\end{align*}
where we use the notation $x_{\ol{\bN}} \sigma(x_{\bN})$ to denote the string in $\{-1,1\}^n$ whose $i$-th coordinate is $x_i$ if $i \in \ol{\bN}$ and $\sigma(x_{\bN})_i$ if $i\in \bN$. The fact $z \preceq \sigma(z)$ and $\sigma$ is a bijection gives us the desired matching. Notice that, whenever $(x,y) \in \calM$ with $|x| = m/2-r$, we have
\begin{align*} 
\bp(y) &= \bp(x) \cdot \prod_{j \in \bN} \dfrac{1 + y_j \cdot \bmu_j}{1 + x_j \cdot \bmu_j}  = \bp(x) \left(1 - \frac{c_0 \cdot \eps}{\sqrt{n}} \right)^{2r} \left( 1 + \frac{c_0\cdot \eps}{\sqrt{n}}\right)^{-2r} \leq \bp(x) \left(1 - \frac{c_0\cdot \eps}{\sqrt{n}}\right)^{2r}.
\end{align*}
where the second expression is obtained by counting the number of coordinates $j \in \bN$ where $y_j$ is $1$ or $-1$, and comparing that to the number of coordinates $j \in \bN$ where $x_j$ is $1$ or $-1$, as well as the fact that every $j \in \bN$ has $\bmu_j= -c_0 \eps /\sqrt{n}$. Therefore, since $2r = 2(m/2 - |x_{\bN}|)$ for each $x$ above, we have
\begin{align*}
\sum_{(x,y) \in \calM} \left( \bp(x) - \bp(y) \right) &\geq \sum_{\substack{x \in \{-1,1\}^{n} \\ |x_{\bN}| < m/2 }} \bp(x) \left( 1 - \left( 1 - \frac{c_0 \cdot \eps}{\sqrt{n}}\right)^{2(m/2-|x_{\bN}|)}\right) \\
            &\geq \sum_{\substack{x \in \{-1,1\}^n \\ |x_{\bN}| < m/2}} \bp(x) \cdot \frac{2 c_0 \eps \cdot (m/2 - |x_{\bN}|)}{\sqrt{n}} = \frac{2c_0\eps}{\sqrt{n}}\cdot  \Ex_{\bx \sim \bp}\left[ \left(\frac{m}{2} - |\bx_{\bN}| \right)^+\right].
\end{align*}
Finally, note that because each $\bx_i$ when $\bx \sim \bp$ is independent, and for every $i \in \bN$, the probability that $\bx_i = 1$ is smaller than $1/2$ (recall $\bN$ are exactly the negatively-biased coordinates), standard anti-concentration inequalities imply that with constant probability over $\bx \sim \bp$, we have $|\bx_{\bN}| \leq m/2 - \sqrt{m} = \Omega(\sqrt{n})$. This gives the $\Omega(\eps)$ lower bound on the distance to monotonicity and finishes the proof of the lemma.
\end{proof}

\subsection{Indistinguishability of $\Dyes$ and $\Dno$}\label{sec:indist}

Recall that $\ell = \log n/\log\log n$. For convenience, we set $\alpha = \eps \ell^3/\sqrt{n}$,
the largest possible value of the mean vectors.

We show (\ref{eq:indistinguish}) with the following approach. First, we note that, since the algorithm receives $q$ independent samples from an $n$-dimensional product distribution, it suffices to consider draws to a ``vector of counts''. Namely, given a sequence of samples $x_1, \dots, x_q \in \{-1,1\}^n$, let $r(x_1, \dots, x_q)$ be the $n$-dimensional vector of integers, where
\[ r(x_1,\dots, x_q)_i = \sum_{k=1}^q \ind\{ (x_k)_i = 1 \}. \]
Then, we let $\calR_y$ (resp. $\calR_n$) denote the distribution given by (i) sampling $\bp \sim \Dyes$ (or, $\bp \sim \Dno$), (ii) letting $\bx_1,\dots, \bx_q \sim \bp$, and (iii) outputting $r(\bx_1,\dots, \bx_q)$. One may equivalently sample $\bx_1,\dots, \bx_q$ from $\Dyes$ or $\Dno$ by sampling $\boldr \sim \calR_y$ or $\calR_n$ and then generating the samples $\bx_1,\dots, \bx_q$ conditioned on the vector of counts $\boldr$. Thus, we will derive (\ref{eq:indistinguish}) by showing that 
$$\dtv\big(\calR_y, \calR_n\big) = o(1),
\quad\text{when $q\le \frac{n}{\eps^2\cdot
\text{polylog}(n)}$}.$$ 
Toward this end, we define $G \subset \Z_{\geq 0}^n$ (in Lemma \ref{lemma:hehe} below) and show that
\begin{itemize}
    \item Lemma \ref{lemma:hehe}:
 $\boldr \in G$ with probability at least $1 - o(1)$ over $\boldr \sim \calR_y$ and $\calR_n$; and
 \item Lemma \ref{lemma:haha}: 
For every count vector $x \in G$, we have
\begin{align*}
\dfrac{\Prx_{\boldr \sim \calR_y}\left[ \boldr = x \right]}{\Prx_{\boldr \sim \calR_n}\left[ \boldr = x \right]} = 1 \pm o(1).
\end{align*}
\end{itemize}
These two lemmas together imply the desired bound on the total variation distance.

\begin{lemma}\label{lemma:hehe}
    For any $q \in \N$, let $G \subset \Z_{\geq 0}^n$ denote the set of vectors $r$ 
      that sum to $q$ such that 
    \[ \frac{q}{2} - \frac{q\cdot \alpha}{2} 
 - \sqrt{q} \log n \leq r_i \leq \frac{q}{2} + \frac{q\cdot \alpha}{2} 
 + \sqrt{q} \log n \]
 for all $i\in [n]$.
 Then, for $\calR$ being both $\calR_y$ and $\calR_n$, we have
    \begin{align*}
        \Prx_{\boldr \sim \calR}\big[ \boldr \notin G \big] = o(1).
    \end{align*}
\end{lemma}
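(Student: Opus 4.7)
The strategy is to exploit the product structure of every $\bp$ in the supports of $\Dyes$ and $\Dno$, so that the coordinates of $\boldr$ become independent binomials, and then to apply a coordinate-wise Hoeffding bound followed by a union bound over the $n$ coordinates.

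First, I would condition on the mean vector $\bmu = (\bmu_1, \dots, \bmu_n)$ that was used to generate $\bp$; by Section~\ref{sec:dist}, $\bmu_i = \eps \cdot \ba_i / \sqrt{n}$ where $\ba_i$ is an i.i.d.\ draw from $\cA$ (in the $\Dyes$ case) or from $\cB$ (in the $\Dno$ case). Since $\bp$ is a product distribution, conditional on $\bmu$ the counts $\boldr_1,\dots,\boldr_n$ are mutually independent with
\[
\boldr_i \sim \mathrm{Binomial}\!\left(q,\, (1+\bmu_i)/2\right),
\qquad \Ex[\boldr_i \mid \bmu_i] = \tfrac{q}{2}(1+\bmu_i).
\]
The uniform bound I need is $|\bmu_i| \leq \alpha$ for every $i$: this follows because, by Lemma~\ref{lem:one-dim-dist}, both $\cA$ and $\cB$ are supported in $\{-1,0\} \cup [\ell^3]$, so $|\ba_i| \leq \ell^3$ and hence $|\bmu_i| \leq \eps \ell^3/\sqrt{n} = \alpha$. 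In particular, $\Ex[\boldr_i \mid \bmu_i] \in [q/2 - q\alpha/2,\, q/2 + q\alpha/2]$ almost surely.

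Next, I apply Hoeffding's inequality to each $\boldr_i$ (viewed as a sum of $q$ independent $\{0,1\}$ random variables) conditional on $\bmu_i$:
\[
\Prx\!\left[\,\big|\boldr_i - \Ex[\boldr_i \mid \bmu_i]\big| \geq \sqrt{q}\,\log n \,\middle|\, \bmu_i \right] \leq 2 \exp\!\left(-2 \log^2 n\right) = n^{-\omega(1)}.
\]
On the complementary event, the triangle inequality combined with the bound on $\Ex[\boldr_i\mid \bmu_i]$ places $\boldr_i$ inside the interval $[q/2 - q\alpha/2 - \sqrt{q}\log n,\, q/2 + q\alpha/2 + \sqrt{q}\log n]$ prescribed by the definition of $G$. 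A union bound over the $n$ coordinates then yields, conditional on any $\bmu$ in the support of either construction,
\[
\Prx\!\left[\boldr \notin G \,\middle|\, \bmu \right] \leq 2n \exp\!\left(-2 \log^2 n\right) = o(1),
\]
and taking expectation over $\bmu$ preserves this bound since it is uniform in $\bmu$. This completes the proof for both $\calR_y$ and $\calR_n$.

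\textbf{Main obstacle.} There is essentially no hard step: the argument is a routine Chernoff/Hoeffding plus union bound. The only thing to be careful about is extracting the uniform $|\bmu_i| \leq \alpha$ guarantee from the support constraints in Lemma~\ref{lem:one-dim-dist}, which is precisely why the parameter $\alpha = \eps\ell^3/\sqrt{n}$ appears in the definition of $G$, and choosing the deviation $\sqrt{q}\log n$ large enough (logarithmic in $n$) that the Hoeffding tail beats the $n$-fold union bound by a polylog margin.
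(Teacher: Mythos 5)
Your proof is correct and follows essentially the same approach as the paper's: both exploit the product structure to reduce to $n$ independent binomials $\boldr_i \sim \Bin(q, (1+\bmu_i)/2)$, observe $|\bmu_i| \le \alpha$ from the support of $\cA,\cB$, and finish by a Chernoff/Hoeffding tail plus a union bound over coordinates. The only cosmetic difference is that the paper phrases the bound via stochastic domination by $\Bin(q,(1+\alpha)/2)$ whereas you condition on $\bmu_i$ and apply Hoeffding relative to the conditional mean directly; this is the same calculation.
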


\begin{proof}
    Note that, each of the $n$ coordinates of $\boldr \sim \calR$ is independent and identically distributed; it is given by letting $\bmu \sim \Dyes$ (or $\Dno$), and then letting $\boldr_i \sim \Bin(q, \frac{1 + \bmu}{2})$. Since $\bmu$ is always below $\alpha$ and above $-\alpha$, so $\boldr_i$ and $-\boldr_i$ are stochastically dominated by $\Bin(q, (1+\alpha)/2)$. The bound then follows from a union bound and standard concentration arguments, using the fact that $\alpha$ is small (say, smaller than $0.01$).
\end{proof}

\begin{lemma}\label{lemma:haha}
Assume that $q \leq n / (\eps^2 \cdot \polylog(n))$.
For any $c \in G$, we have
\begin{align*}
    \dfrac{\Prx_{\boldr \sim \calR_y}\left[ \boldr = c \right]}{\Prx_{\boldr \sim \calR_n}\left[ \boldr = c \right]} = 1 \pm o(1).
\end{align*}
\end{lemma}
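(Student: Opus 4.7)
The plan is to exploit that, under both $\calR_y$ and $\calR_n$, the $n$ coordinates of $\boldr$ are i.i.d., so
\[
\Prx_{\boldr \sim \calR_y}[\boldr = c]/\Prx_{\boldr \sim \calR_n}[\boldr = c] = \prod_{i=1}^n p_A(c_i)/p_B(c_i),
\]
where $p_A(k) := \Ex_{a \sim \calA}\!\left[\binom{q}{k}((1+\beta a)/2)^{k}((1-\beta a)/2)^{q-k}\right]$ with $\beta := \eps/\sqrt{n}$, and $p_B$ is defined analogously using $\calB$. It thus suffices to show that the per-coordinate ratio satisfies $|p_A(k)/p_B(k) - 1| = o(1/n)$ uniformly for $k$ in the range dictated by $G$, namely $|2k - q| \leq s := q\alpha + 2\sqrt{q}\log n$ with $\alpha := \eps \ell^3/\sqrt{n}$. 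Multiplying over the $n$ coordinates then yields the claimed ratio bound.

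To analyze the per-coordinate ratio, expand $f_{k,q}(\mu) := (1+\mu)^k(1-\mu)^{q-k} = \sum_{m=0}^{q} P_{k,q,m}\mu^m$ (so $P_{k,q,m}$ is, up to sign, the Krawtchouk polynomial $K_m(k;q)$). Then $p_A(k) - p_B(k) = \binom{q}{k}2^{-q} \sum_{m} P_{k,q,m}\beta^m(\Ex_\calA[a^m] - \Ex_\calB[a^m])$, and all terms with $m \leq \ell$ vanish by the moment-matching in \Lem{one-dim-dist}. Since $\calA,\calB$ are supported in $[-\ell^3,\ell^3]$, each surviving moment difference is bounded by $2\ell^{3m}$, which reduces the task to bounding $\sum_{m > \ell} |P_{k,q,m}|\alpha^m$. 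A separate direct computation using $\log f_{k,q}(\beta b) = (2k-q)\,\mathrm{atanh}(\beta b) + (q/2)\log(1-(\beta b)^2) = O(s\alpha + q\alpha^2) = o(1)$ shows $p_B(k) = \binom{q}{k} 2^{-q}(1+o(1))$, provided the polylogarithmic factor in $q \leq n/(\eps^2\polylog(n))$ is chosen sufficiently large.

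To control $|P_{k,q,m}|$, I would factor $f_{k,q}(\mu) = (1-\mu^2)^{(q-s)/2}(1\pm\mu)^{s}$ (the sign depending on whether $k$ is above or below $q/2$) and apply Cauchy's estimate on the circle $|\mu| = r$ to obtain $|P_{k,q,m}| \leq (1+r^2)^{(q-s)/2}(1+r)^{s}r^{-m}$. I plan to handle two regimes separately. For $m \geq s^2/q$, taking $r = \sqrt{m/q}$ gives $|P_{k,q,m}|\alpha^m \leq e^{s\sqrt{m/q}}(eq\alpha^2/m)^{m/2}$. For $m \leq s^2/q$, taking $r = m/s$ gives $|P_{k,q,m}|\alpha^m \leq e^{O(m)}(s\alpha/m)^m$. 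With $\polylog(n)$ chosen sufficiently large (say $\log^{10} n$), the quantities $q\alpha^2$, $\sqrt{q}\alpha$, and $s\alpha$ become far smaller than the natural scales $m$, $\sqrt{m}$, $m$ for $m > \ell = \log n/\log\log n$, so each term is super-polynomially small in $n$. Summing the resulting geometric-type series yields $\sum_{m > \ell} |P_{k,q,m}|\alpha^m = o(1/n)$, which gives $|p_A(k)/p_B(k) - 1| = o(1/n)$. Taking the product over all $n$ coordinates then produces $\prod_i p_A(c_i)/p_B(c_i) = \exp(o(1)) = 1 \pm o(1)$.

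The main obstacle is the ``unbalanced'' regime where $|2k - q|$ can be as large as $\Theta(\sqrt{q}\log n)$, permitted by the fluctuation slack in the definition of $G$. Here the crude Vandermonde bound $|P_{k,q,m}| \leq \binom{q}{m}$ is too weak: combined with Krawtchouk orthogonality, it yields only an averaged chi-squared estimate, not the pointwise ratio control that the statement demands. The two-regime Cauchy analysis above, pivoting on the factorization of $f_{k,q}$ into the even part $(1-\mu^2)^{(q-s)/2}$ and the short correction $(1\pm\mu)^{s}$ and then optimizing the contour radius separately for small and large $m$, is designed precisely to close this gap.
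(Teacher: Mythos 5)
Your proposal is essentially correct and follows the same overall skeleton as the paper's proof: decompose into a product of per-coordinate ratios, write the per-coordinate probability as an expectation over $\calA$ (resp.\ $\calB$) of $\binom{q}{c_i}\left(\frac{1+\mu}{2}\right)^{c_i}\left(\frac{1-\mu}{2}\right)^{q-c_i}$, factor that density via $(1-\mu^2)^{(q-s)/2}(1\pm\mu)^{s}$ with $s = |2c_i - q|$, observe that the first $\ell$ Taylor coefficients in $\mu$ cancel by moment-matching, show the leading term $\binom{q}{c_i}2^{-q}$ stays bounded away from zero for $c \in G$, and bound the tail by a quantity that is $\ll 1/n$, then multiply over the $n$ coordinates.

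Where you diverge is in how the tail is controlled. You invoke Cauchy's coefficient estimate on the factorization $f_{k,q}(\mu)=(1-\mu^2)^{(q-s)/2}(1\pm\mu)^s$, optimizing the contour radius $r$ in two regimes $m\lessgtr s^2/q$. You motivate this by saying the crude bound $|P_{k,q,m}|\le \binom{q}{m}$ is too weak in the unbalanced regime. But once the factorization is in hand, the elementary bound suffices: a coefficient of $\mu^t$ in $(1-\mu^2)^{q/2-d}(1+\sigma\mu)^{2d}$ is a sum over $(\ell_1,\ell_2)$ with $2\ell_1+\ell_2=t$ of $\binom{q/2-d}{\ell_1}\binom{2d}{\ell_2}$, which is at most $t\cdot(q^{1/2}+2d)^t$ (crude binomial bound times the $\le t$ index pairs). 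With $c \in G$ giving $d\le q\alpha/2 + \sqrt{q}\log n$, both $q^{1/2}\alpha$ and $d\alpha$ are $1/\polylog(n)$, so the tail $\sum_{t>\ell} t\cdot\big((q^{1/2}+2d)\alpha\big)^t$ is $n^{-\omega(1)}$ for $\ell = \log n/\log\log n$. In other words, the factorization alone already tames the unbalanced regime; the contour-optimized Cauchy estimate buys you nothing that the trivial counting bound doesn't already give. Your route should nonetheless yield a correct proof if carried out carefully, but the complex-analytic machinery is an unnecessary complication, and the claim that the elementary estimate would fail is not right once the even/odd factorization is exploited, as the paper does.
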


\begin{proof}
 For $\calR$ being $\calR_y$ or $\calR_n$ and $\calD$ being $\calD_y$ or $\calD_n$
   accordingly, 
   we write down $\Pr_{\boldr\sim \calR} [\boldr=c]$ as
   \begin{align*}
        \Prx_{\boldr \sim \calR}\left[ \boldr = c\right] &= \prod_{i=1}^n \Ex_{\bmu \sim \calD}\left[ \binom{q}{c_i} \left(\frac{1 + \bmu}{2}\right)^{c_i} \left( \frac{1 - \bmu}{2} \right)^{q-c_i} \right] \\
                        &= \prod_{i=1}^n \binom{q}{c_i} \cdot 2^{-q} \cdot \Ex_{\bmu \sim \calD}\left[ \left(1 - \bmu^2 \right)^{q/2 - d_i} \left(1+ \sigma_i \bmu \right)^{2 d_i} \right],
    \end{align*}
where $0\le d_i\le q/2$ and $\sigma_i \in \{-1,1\}$ uniquely satisfy $c_i - \sigma_i d_i = q/2$.

Note $c \in G$ implies $d_i \leq q \alpha/2 + \sqrt{q} \log n \leq \sqrt{n} / (\eps \cdot \polylog(n))$. 
        Furthermore, for any $\mu \in [-\alpha, \alpha]$, 
    \begin{align}
    \left(1 - \mu^2\right)^{q/2 - d_i}\left(1+ \sigma_i \mu\right)^{2d_i} = \sum_{\ell_1 = 0}^{q/2-d_i} \sum_{\ell_2 = 0}^{2d_i} \binom{q/2-d_i}{\ell_1} \binom{2d_i}{\ell_2} (-1)^{\ell_1} \sigma_i^{\ell_2} \mu^{2\ell_1 + \ell_2}. \label{eq:expression} \end{align}
    We consider the degree-$\ell$ Taylor expansion $T_{\ell}(\mu,d_i,\sigma_i)$ (with respect to $\mu$) at $0$ of (\ref{eq:expression}), and we have that the error in the expression becomes at most
    \begin{align*} 
    &\sum_{t > \ell} \sum_{\ell_1=0}^{q/2-d_i} \sum_{\ell_2=0}^{2d_i} \ind\{ 2\ell_1 + \ell_2 = t \} \cdot \binom{q/2-d_i}{\ell_1} \binom{2d_i}{\ell_2} \cdot \alpha^{t} \\
        &\qquad\qquad \leq \sum_{t > \ell} t\cdot \left( q^{1/2} + 2d_i\right)^{t} \cdot \alpha^{t} \leq \sum_{t > \ell} \left( 2\alpha \sqrt{q} + 4\alpha d_i  \right)^{t} 
        \leq \frac{1}{n^{10}},
    \end{align*}
    as long as $\alpha \sqrt{q} + \alpha d_i < 1 / \polylog(n)$ for a large enough polynomial, given that $\ell=\log n / \log \log n $, which it is by setting of $\alpha, q$ and $d_i$. Notice, furthermore, that (\ref{eq:expression}) is at least $1 - O(q\mu^2) - O(d_i \mu) \geq 1/2$. Hence, the ratio of the two probabilities, when $\calR$ is $\calR_y$ and $\calR_n$ is
    \begin{align*}
        \prod_{i=1}^n \dfrac{\Ex_{\bmu \sim \calD_y}\left[ T_{\ell}(\bmu, d_i,\sigma_i) \pm 1/n^{10}\right]}{\Ex_{\bmu \sim \calD_n}\left[ T_{\ell}(\bmu, d_i,\sigma_i) \pm 1/n^{10}\right]}=\prod_{i=1}^n \left(1\pm O(1/n^{10})\right) = 1 \pm O(1 / n^{9}),
    \end{align*}
    where we used the fact that the expected Taylor expansion to degree $\ell$ is equal for $\calD_y$ and $\calD_n$, since it is a function of the first $\ell$ moments of $\calD_y$ and $\calD_n$. 
\end{proof}


\section{Testing Uniformity of Monotone Distributions}

In this section, we prove the following theorem, which gives a query lower bound on testing uniformity of a distribution which is promised to be monotone using subcube conditioning queries. 
\begin{theorem}[Uniformity Testing of Monotone Distributions -- Lower Bound] For any $\eps > 0$,~any $\eps$-test for uniformity of distributions that are promised to be monotone must make $\tilde{\Omega}(\sqrt{n}/\eps^2)$ queries.
\end{theorem}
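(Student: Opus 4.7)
My plan is to follow the template of Section~\ref{sec:mon-lb}: invoke Yao's minimax principle with hard distributions $\Dyes$ and $\Dno$ supported on product distributions over $\{-1,1\}^n$, verify that both satisfy the monotonicity promise and that $\Dno$ is $\Omega(\eps)$-far from uniform, use Lemma~\ref{lem:tree-to-iid} to reduce to i.i.d.\ sample complexity, and finally bound the total variation distance between the two sample distributions via a $\chi^2$ argument.

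\textbf{Construction and properties.} Let $\Dyes$ be the singleton distribution on the uniform distribution over $\{-1,1\}^n$. For $\Dno$, independently set $\bmu_i = \eps/n^{1/4}$ with probability $1/\sqrt{n}$ and $\bmu_i = 0$ otherwise for each $i\in[n]$, and output the product distribution $\bp$ with marginals $\Pr[\bx_i = 1] = (1+\bmu_i)/2$. Since all biases are non-negative, every distribution in $\mathrm{supp}(\Dno)$ is monotone, so both cases respect the promise. For the far-from-uniform claim, let $\bB = \{i : \bmu_i > 0\}$; standard concentration gives $|\bB| = \Theta(\sqrt{n})$ with probability $1-o(1)$. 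Conditioned on this, the statistic $Z = \sum_{i\in \bB} \bx_i$ has mean $0$ and variance $|\bB|$ under uniform, while under $\bp$ it has mean $|\bB| \cdot \eps/n^{1/4} = \Theta(\eps\sqrt{|\bB|})$ and essentially the same variance; by Berry--Esseen (valid since $|\bB|$ is large), $\dtv(\bp, U) = \Omega(\eps)$.

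\textbf{Indistinguishability.} By Lemma~\ref{lem:tree-to-iid} and the product-distribution structure (as in Section~\ref{sec:indist}), it suffices to bound the total variation between the induced distributions $\calR_y, \calR_n$ on the count vector $\boldr_i = \sum_{k=1}^q \ind\{(\bx_k)_i = 1\}$ produced by $q$ i.i.d.\ samples. Each coordinate of $\calR_y$ is $\mathrm{Bin}(q,1/2)$, while each coordinate of $\calR_n$ is the mixture $(1-\delta)\mathrm{Bin}(q,1/2) + \delta\,\mathrm{Bin}(q,(1+\mu)/2)$ with $\delta = 1/\sqrt{n}$ and $\mu = \eps/n^{1/4}$. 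A direct likelihood-ratio computation (equivalently, a Gaussian approximation plus the moment generating function, as outlined in Section~\ref{sec:ideas}) shows $\chi^2(\mathrm{Bin}(q,(1+\mu)/2)\,\|\,\mathrm{Bin}(q,1/2)) = (1+\mu^2)^q - 1$. Standard $\chi^2$ calculus for mixtures then gives the per-coordinate bound $\delta^2((1+\mu^2)^q - 1)$, and tensorizing over $n$ independent coordinates yields
\[
\chi^2(\calR_n \,\|\, \calR_y) \leq \exp\!\left((1+\mu^2)^q - 1\right) - 1.
\]
For $q \leq \sqrt{n}/(\eps^2 \log n)$ we have $q\mu^2 = q\eps^2/\sqrt{n} \leq 1/\log n$, so the right-hand side is $o(1)$; combined with $\dtv \leq \tfrac{1}{2}\sqrt{\chi^2}$, this gives $\dtv(\calR_y, \calR_n) = o(1)$ and yields the claimed $\tilde{\Omega}(\sqrt{n}/\eps^2)$ lower bound.

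\textbf{Main obstacle.} The difficulty is in designing $\Dno$: the standard (sample-based) $\sqrt{n}/\eps^2$ lower bound for uniformity testing of product distributions~\cite{CDKS17} uses $\pm\eps/\sqrt{n}$ biases on every coordinate, but such a construction is not monotone. The promise forces non-negative biases, so the signal must be redistributed without increasing the effective $\chi^2$ budget. The correct balance is to activate only a $\delta = 1/\sqrt{n}$ fraction of coordinates with the larger bias $\mu = \eps/n^{1/4}$: on one hand the total bias $\delta \cdot n \cdot \mu = \eps n^{1/4}$ remains $\Theta(\eps)$ standard deviations away from uniform (keeping the distribution $\Omega(\eps)$-far), while on the other hand the mixture contraction $\delta^2 = 1/n$ exactly cancels the tensorization blow-up over the $n$ coordinates, allowing the bound $(1+\mu^2)^q - 1$ to remain $o(1)$ for $q$ up to $\tilde{O}(\sqrt{n}/\eps^2)$. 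Without this precise balance the construction either fails to be $\eps$-far from uniform or becomes distinguishable within the target budget.
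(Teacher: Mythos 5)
Your construction of $\Dno$ is identical to the paper's (bias $\eps/n^{1/4}$ on a random $1/\sqrt n$-fraction of coordinates), and the overall Yao/product-distribution reduction via Lemma~\ref{lem:tree-to-iid} matches. Where you genuinely diverge is the indistinguishability argument. The paper passes to the Gaussian shift model $\calN(\bmu,I)$ versus $\calN(0,I)$ via a sign-of-Gaussian trick, writes out the likelihood ratio $f_n/f_y$, and shows it is close to $1$ with high probability by controlling $\sum_j \exp(\eps \bX_j/n^{1/4})$ via the Gaussian moment generating function (Lemma~\ref{lem:gaussian-calc} and Claim~\ref{clm:exp-gauss}). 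You instead stay in the discrete world and compute $\chi^2$ divergence directly: the exact identity $\chi^2(\mathrm{Bin}(q,(1+\mu)/2)\,\|\,\mathrm{Bin}(q,1/2))=(1+\mu^2)^q-1$, the mixture contraction $\chi^2((1-\delta)B_0+\delta B_\mu\,\|\,B_0)=\delta^2\chi^2(B_\mu\|B_0)$ (exact, not merely an upper bound), and tensorization over the $n$ independent count coordinates, followed by $\dtv\le\tfrac12\sqrt{\chi^2}$. I checked these identities and the final estimate; they are correct, and the argument even loses fewer logarithmic factors (you get $q\lesssim \sqrt n/(\eps^2\log n)$ versus the paper's $\sqrt n/(\eps^2\log^4 n)$). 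What your route buys is a shorter, more self-contained indistinguishability proof that avoids the continuous approximation and high-probability likelihood-ratio analysis; what the paper's route buys is a template that is more robust to settings where a clean closed-form $\chi^2$ is not available. Your ``main obstacle'' discussion correctly identifies the design constraint — nonnegative biases forced by the monotonicity promise, with $\delta=1/\sqrt n$ and $\mu=\eps/n^{1/4}$ chosen so that $\delta^2$ exactly cancels the tensorization factor $n$ while keeping $\dtv$ from uniform at $\Theta(\eps)$ — which is the same tension the paper's construction resolves.
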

Similar to Section~\ref{sec:mon-lb}, we describe a distribution $\Dno$ supported on monotone product distributions over $\{-1,1\}^n$. Importantly, a distribution $\bp \sim \Dno$ will be $\Omega(\eps)$-far from the uniform distribution with probability $1-o_n(1)$ (Lemma \ref{lem:distancelemma}). Then, we show that for  $q$ which is at most $c \sqrt{n} / (\eps^2 \log^4 n)$, for a small enough constant $c > 0$, any function $\Alg \colon \{-1,1\}^{nq} \to \{\text{``accept''}, \text{``reject''} \}$ cannot output ``accept'' with probability at least $0.99$ when samples are drawn from the uniform distribution, and output ``reject'' with probability at least $0.99$ when samples are drawn from  $\Dno$.

\subsection{The distribution $\Dno$}\label{sec:dno-def}

A draw of $\bp \sim \Dno$ is generated as follows:
\begin{flushleft}\begin{itemize}
\item First, we let $\calD$ denote the distribution over vectors $\bmu$ where we 
independently set $\bmu_i$ to be $\eps/n^{1/4}$ with probability $1/\sqrt{n}$ and $0$ otherwise.
\item Then, we let $\bp$ be the monotone product distribution on $\{-1,1\}^n$ whose mean vector is $\bmu$.
\end{itemize}\end{flushleft}
The fact that $\bp \sim \Dno$ is far from the uniform distribution follows from the subsequent lemma.
\begin{lemma}\label{lem:distancelemma}
With probability at least $1 - o_n(1)$, $\bp \sim \Dno$ is $\Omega(\eps)$-far from the uniform distribution.
\end{lemma}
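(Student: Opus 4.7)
The plan is to lower bound $\dtv(\bp, U)$ (where $U$ is the uniform distribution on $\{-1,1\}^n$) by a data-processing argument applied to the ``total sum'' statistic restricted to the random set of biased coordinates. Let $\bS := \{i \in [n] : \bmu_i \neq 0\}$. Since $|\bS| \sim \Bin(n, 1/\sqrt{n})$ has mean $\sqrt{n}$, a standard Chernoff bound gives $|\bS| \in [\tfrac12 \sqrt{n},\, 2\sqrt{n}]$ with probability $1 - o_n(1)$. I condition on this event and on the realization of $\bS$ (call it $S$, with $s := |S| = \Theta(\sqrt{n})$), and aim to show $\dtv(\bp, U) = \Omega(\eps)$ deterministically from here.

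Conditioned on $S$, the distribution $\bp$ is a product whose marginal on the $n - s$ coordinates outside $S$ is uniform, and whose marginal on $S$ (call it $\bp_S$) is a product of $s$ independent $\pm 1$ random variables each with mean $\eps / n^{1/4}$. Since TV distance is invariant under tensoring with a common factor, $\dtv(\bp, U) = \dtv(\bp_S, U_S)$, where $U_S$ is uniform on $\{-1,1\}^S$. Applying the data-processing inequality to the map $x \mapsto \sum_{i \in S} x_i$ (equivalently, the count of $+1$'s) then yields
\begin{equation*}
\dtv(\bp, U) \geq \dtv\big(\Bin(s, \tfrac12 + \delta),\ \Bin(s, \tfrac12)\big), \qquad \delta = \frac{\eps}{2 n^{1/4}}.
\end{equation*}

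It remains to estimate this binomial TV distance. With $s = \Theta(\sqrt{n})$, the two binomials differ in mean by $s\delta = \Theta(\eps \cdot n^{1/4})$, while each has standard deviation $\Theta(\sqrt{s}) = \Theta(n^{1/4})$; the mean shift is thus $\Theta(\eps)$ standard deviations. This is precisely the regime in which the TV distance is $\Theta(\eps)$: by Berry-Esseen each binomial is within Kolmogorov distance $O(1/\sqrt{s}) = O(n^{-1/4})$ of a Gaussian, and $\dtv\bigl(N(0,\sigma^2), N(s\delta, \sigma^2)\bigr) = \Theta(s\delta/\sigma) = \Theta(\eps)$ for $\sigma = \Theta(\sqrt{s})$ and $\eps$ bounded by a constant. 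The main (and only mildly subtle) obstacle is ensuring the Berry-Esseen slack $O(n^{-1/4})$ is negligible relative to the target gap $\Omega(\eps)$; this holds in the nontrivial regime $\eps \gg n^{-1/4}$, which is the only case of interest for a $\tilde{\Omega}(\sqrt{n}/\eps^2)$ query lower bound. An alternative avoiding Berry-Esseen is to directly compare $\Pr[\sum_{i \in S} \bx_i \geq 0]$ under $\bp_S$ and $U_S$ using a local CLT for $\Bin(s, 1/2)$, which gives the same $\Theta(\eps)$ separation.
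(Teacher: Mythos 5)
Your overall plan matches the paper's: condition on the biased set $\bN$ having $|\bN| = \Theta(\sqrt n)$ (a w.h.p.\ event), reduce $\dtv(\bp, \calU_n)$ to the TV distance between the laws of the count $T = |\{i \in \bN : x_i = 1\}|$, which are $\Bin(|\bN|, 1/2 + \eps/(2n^{1/4}))$ versus $\Bin(|\bN|, 1/2)$, and bound that one-dimensional distance. The divergence is in the last step. The paper invokes no CLT of any kind. Instead it writes the exact ratio $\bp(x)/\calU_n(x) = (1-\eps^2/\sqrt{n})^{t(x)}(1-\eps/n^{1/4})^{|\bN| - 2t(x)}$, observes this is $\leq 1$ whenever $t(x) \leq |\bN|/2$ (so the pointwise deficits are all nonnegative on that half), and lower bounds the sum of deficits by restricting to the constant-probability set $\{x : |\bN| - 2t(x) = \Omega(\sqrt{|\bN|})\}$, where the ratio is $\leq e^{-\Omega(\eps)} = 1-\Omega(\eps)$. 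The only probabilistic input needed is anti-concentration of $\Bin(|\bN|,1/2)$ at scale $\sqrt{|\bN|}$, and the resulting $\dtv \geq \Omega(\eps)$ carries no additive $n^{-1/4}$ slack.

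That slack is where your argument has a genuine gap. Berry--Esseen only yields $\dtv \geq \Theta(\eps) - O(n^{-1/4})$, which is vacuous when $\eps \lesssim n^{-1/4}$. You flag this, but dismissing that regime as not ``of interest'' is incorrect: the theorem claims $\tilde\Omega(\sqrt n/\eps^2)$ for every $\eps > 0$, and for $\eps = o(n^{-1/4})$ the resulting $\omega(n)$ lower bound is a substantive part of that claim which still rests on this lemma. Your sketched fallback of ``compare $\Pr[\sum_{i\in S}\bx_i \geq 0]$ via a local CLT'' does not obviously escape the problem either: $\Pr_{\Bin(s,1/2)}[T = s/2] = \Theta(1/\sqrt s) = \Theta(n^{-1/4})$ enters that comparison, and a naively applied local CLT carries an error at the same $\Theta(1/\sqrt s)$ scale as Berry--Esseen and cannot by itself certify the cancellation between the two binomials' corrections. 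The robust fix is to drop CLT approximations entirely and compare $\bp(x)$ and $\calU_n(x)$ exactly through the ratio above --- this is what the paper does, and with that change your sketch becomes a correct proof for all $\eps \in (0,1)$.
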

\begin{proof}
We consider a draw of $\bmu \sim \calD$, and note that with probability at least $1-o_n(1)$, $\bp \sim \Dno$ has a set $\bN \subset [n]$ of at least $\Omega(\sqrt{n})$ coordinates $i$ with $\bmu_i = \eps/n^{1/4}$. Fix such a draw and let $\bp$ denote the corresponding distribution. The total variation distance from $\bp$, generated with mean vector $\mu$, to the uniform distribution can be lower bounded by considering strings $x \in \{-1,1\}^n$ which have fewer $1$'s than $-1$'s in coordinates of $\bN$. 
For each such string $x$, letting $t(x)$ denote the number of coordinates in $\bN$ with $x_i = 1$, the probability of $x$ in $\bp$ is
$$
\frac{1}{2^n}\cdot \left(1 + \frac{\eps}{n^{1/4}} \right)^{t(x)} \left( 1 - \frac{\eps}{n^{1/4}}\right)^{|\bN| - t(x)} 
=\frac{1}{2^n}\cdot \left(1 - \frac{\eps^2}{\sqrt{n}}\right)^{t(x)} \left(1 - \frac{\eps}{n^{1/4}}\right)^{|\bN| - 2t(x)}< \frac{1}{2^n}.
$$
As a result, we can bounded $\dtv(\bp,\calU_n)$ from below as follows:
\begin{align}
\dtv(\bp, \calU_n) 
     &\geq \frac{1}{2^n} \sum_{\substack{x \in \{-1,1\}^n \\ t(x) \leq |\bN|/2}} \left( 1 - \left(1 - \frac{\eps^2}{\sqrt{n}}\right)^{t(x)} \left(1 - \frac{\eps}{n^{1/4}}\right)^{|\bN| - 2t(x)}\right) \nonumber\\[0.5ex]
    &\ge
     \frac{1}{2^n} \sum_{\substack{x \in \{-1,1\}^n \\ t(x) \leq |\bN|/2}} \left( 1 -  \left(1 - \frac{\eps}{n^{1/4}}\right)^{|\bN| - 2t(x)}\right).\label{eq:hehe1}
\end{align}
On the other hand, using $|\bN| = \Omega(\sqrt{n})$, there is a constant probability over a uniform $\bx \sim \{-1,1\}^n$ that $t(\bx)$ is bounded away from $|\bN|/2$ by $\Omega(n^{1/4})$.
For every such $\bx$, we have 
\[ 
 \left(1 - \frac{\eps}{n^{1/4}}\right)^{|\bN| - 2t(\bx)} \leq \left(1 - \frac{\eps}{n^{1/4}}\right)^{\Omega(n^{1/4})}\le e^{-\Omega(\eps)}\le 1-\Omega(\eps). \]
Combining everything we have from (\ref{eq:hehe1}) that
$$
\dtv(\bp,\calU_n)\ge \frac{1}{2^n}\cdot \Omega(2^n)\cdot \Omega(\eps)=\Omega(\eps).
$$
This finishes the proof of the lemma.
\end{proof}

\def\br{\mathbf{r}}

\subsection{Indistinguishability of $\Dno$ from the uniform distribution}

Consider the task of distinguishing via $q$ independent samples, $\bx_1, \dots, \bx_q \in \R^n$, whether these samples were drawn from the standard $n$-dimensional Gaussian $\calN(0, I)$, or an $n$-dimensional Gaussian $\calN(\bmu, I)$, where $\bmu \sim \calD$. We consider the above problem because of the following simple claim, which shows how to generate a product distribution whose mean vector has $i$-th coordinate $\Omega(\bmu_i)$.

\begin{claim}
Let $\sign\colon \R^n \to \{-1,1\}^n$ denote the function which applies $\sign(\cdot)$  coordinate-wise. 
\begin{flushleft}\begin{itemize}
\item The uniform distribution over $\{-1,1\}^n$ can be generated by sampling $\bx \sim \calN(0, I)$ and outputting $\sign(\bx)$.
\item For a fixed $\mu \in [0, 1/2]^n$, consider the product distribution over $\{-1,1\}^n$ generated by sampling $\bx \sim \calN( \mu, I)$ and outputting $\sign(\bx)$. Then, the mean vector of such a distribution has the $i$-th coordinate set to $\Omega(\mu_i)$.
\end{itemize}\end{flushleft}
\end{claim}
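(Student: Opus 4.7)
The plan is to reduce both bullets to a per-coordinate Gaussian calculation, exploiting the fact that the coordinates of $\calN(\mu, I)$ are independent and hence the coordinates of $\sign(\bx)$ form a product distribution on $\{-1,1\}^n$. Once this reduction is made, each bullet becomes a one-dimensional statement about the sign of a univariate Gaussian with a specified mean.

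For the first bullet, I will note that when $\mu = 0$ each $\bx_i$ is a standard Gaussian, whose density is symmetric about zero, so $\Pr[\bx_i > 0] = \Pr[\bx_i < 0] = 1/2$ (the event $\bx_i = 0$ has measure zero). The coordinates of $\sign(\bx)$ are therefore i.i.d.\ uniform on $\{-1,1\}$, which matches the uniform distribution on $\{-1,1\}^n$ coordinate by coordinate.

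For the second bullet, the mean of the $i$-th coordinate of $\sign(\bx)$ when $\bx \sim \calN(\mu, I)$ is
\[
\Ex_{\bx_i \sim \calN(\mu_i,1)}\big[\sign(\bx_i)\big] = \Pr[\bx_i > 0] - \Pr[\bx_i < 0] = 2\Phi(\mu_i) - 1 = 2\int_0^{\mu_i} \phi(z)\,dz,
\]
where $\phi$ and $\Phi$ denote the standard normal density and cumulative distribution function. It then suffices to show this integral is $\Omega(\mu_i)$ uniformly for $\mu_i \in [0, 1/2]$. Since $\phi$ is decreasing on $[0, \infty)$, I will lower-bound $\phi(z) \geq \phi(1/2) = e^{-1/8}/\sqrt{2\pi}$ throughout the integration range, which immediately yields $2\int_0^{\mu_i} \phi(z)\,dz \geq (2 e^{-1/8}/\sqrt{2\pi})\cdot \mu_i = \Omega(\mu_i)$ with an explicit dimension-independent constant.

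There is no real obstacle: every step is elementary. The one subtlety worth flagging is that the lower bound on $\phi$ must hold uniformly across the entire interval $[0, 1/2]$ rather than only pointwise at $\mu_i$, which is exactly why the claim restricts each $\mu_i$ to a bounded range (the implicit constant in the $\Omega(\cdot)$ would be forced to degrade if the range were allowed to grow, since $2\Phi(\mu_i) - 1$ saturates at $1$).
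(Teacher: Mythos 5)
Your proof is correct and takes essentially the same approach as the paper, which simply cites symmetry of the Gaussian for the first bullet and ``standard Gaussian anti-concentration'' for the second; you have just filled in the one-line anti-concentration calculation explicitly via $2\Phi(\mu_i)-1 \geq (2\phi(1/2))\mu_i$.
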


\begin{proof}
The first condition is by symmetry of the Gaussian distribution, and the second condition is by standard Gaussian anti-concentration, whenever $\mu \in [0, 1/2]$.
\end{proof}

Hence, it suffices to prove the following lemma.

\begin{lemma}\label{lem:mainlemma1}
Consider an algorithm that takes $q$ samples $\bx_1, \ldots, \bx_q\in \R^n$ and satisfies the following guarantees:
\begin{flushleft}\begin{itemize}
\item \textbf{\emph{Standard Case}}: If $\bx_1,\dots, \bx_q \sim \calN(0, I)$ and the algorithm receives those samples, then the algorithm outputs ``standard'' with probability at least $0.99$.
\item \textbf{\emph{Non-Standard Case}}: We sample $\bmu \sim \calD$,\footnote{The distribution $\calD$ is defined in the first bullet point of Subsection~\ref{sec:dno-def}.} then $\bx_1,\dots, \bx_q \sim \calN(\bmu, I)$, and the algorithm receives those samples, the algorithm outputs ``not standard'' with probability at least $0.99$.
\end{itemize}\end{flushleft}
Then, the number of samples must satisfy $q = \tilde{\Omega}(\sqrt{n}/\eps^2)$.
\end{lemma}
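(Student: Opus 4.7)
The plan is to apply Le Cam's two-point method on the two product measures on $\R^{nq}$: let $P_0 = \calN(0,I)^{\otimes q}$ and $P_1 = \Ex_{\bmu \sim \calD}\bigl[\calN(\bmu, I)^{\otimes q}\bigr]$. Any algorithm with the claimed guarantees forces $\dtv(P_0, P_1) \geq 0.98$, so it suffices to show $\dtv(P_0, P_1) = o(1)$ whenever $q \leq c\sqrt{n}/\eps^2$ for a small enough absolute constant $c$. I will do this by bounding the chi-squared divergence, using the standard inequality $\dtv(P_0, P_1) \leq \tfrac{1}{2}\sqrt{\chi^2(P_1 \| P_0)}$.

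The key computation is a clean closed form for $\chi^2$. Using the Gaussian likelihood ratio $\frac{d\calN(\mu, I)}{d\calN(0,I)}(x) = \exp(\mu^\top x - \tfrac12 \|\mu\|^2)$ and the MGF of a Gaussian, a routine calculation (integrating out the $\bx_k$'s against $\calN(0,I)$ before taking the expectation over two independent copies $\bmu, \bmu' \sim \calD$) gives
\[
1 + \chi^2(P_1 \| P_0) \;=\; \Ex_{\bmu, \bmu' \sim \calD}\bigl[\exp(q\,\bmu^\top \bmu')\bigr].
\]
Because $\calD$ is a product distribution, the pairs $(\bmu_i, \bmu'_i)$ are independent across $i$ and the right-hand side factors as $\prod_{i=1}^n \Ex[\exp(q \bmu_i \bmu'_i)]$. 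Each product $\bmu_i \bmu'_i$ equals $\eps^2/\sqrt{n}$ with probability exactly $1/n$ (both coordinates must be nonzero, each an independent $1/\sqrt n$ event) and is $0$ otherwise, so every factor equals
\[
1 + \tfrac{1}{n}\bigl(e^{q\eps^2/\sqrt{n}} - 1\bigr).
\]

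The final step is a Taylor bound. When $q \leq c\sqrt{n}/\eps^2$ we have $q\eps^2/\sqrt{n} \leq c$, hence $e^{q\eps^2/\sqrt n} - 1 \leq 2q\eps^2/\sqrt n$, giving
\[
1 + \chi^2(P_1 \| P_0) \;\leq\; \Bigl(1 + \tfrac{2 q \eps^2}{n\sqrt{n}}\Bigr)^{n} \;\leq\; \exp\!\Bigl(\tfrac{2 q \eps^2}{\sqrt{n}}\Bigr) \;\leq\; e^{2c}.
\]
Choosing $c$ small (e.g.\ with an extra $1/\log^{O(1)} n$ slack to match the $\tilde{\Omega}$ in the statement) makes the right-hand side arbitrarily close to $1$, so $\chi^2(P_1 \| P_0) = o(1)$, and consequently $\dtv(P_0, P_1) = o(1)$, contradicting the $0.98$ separation. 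The main technical obstacle in this plan is the MGF computation that collapses the mixture-vs-product chi-squared to $\Ex[\exp(q \bmu^\top \bmu')]$; once that identity is in hand, the independence of $\calD$'s coordinates and the smallness of each per-coordinate success probability $1/n$ combine to give precisely the $\sqrt{n}/\eps^2$ threshold one expects from the signal-to-noise heuristic (each of the roughly $\sqrt n$ shifted coordinates carries mean $\eps/n^{1/4}$, producing a per-coordinate chi-squared contribution of order $\eps^2/\sqrt n$).
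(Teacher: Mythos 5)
Your proof is correct, and it takes a genuinely different (and cleaner) route than the paper's. The paper argues via a pointwise likelihood-ratio bound: it shows that, with probability $1-o_n(1)$ over draws $\bx_1,\ldots,\bx_q \sim \calN(0,I)$, the ratio $f_n/f_y$ is at least $1-o_n(1)$; this requires a concentration argument for $\sum_j \exp(\eps \bX_j/n^{1/4})$ via the Gaussian MGF and Chebyshev, and the $\log^4 n$ slack the paper imposes on $q$ is exactly what makes that concentration go through. You instead use the Ingster-style $\chi^2$ identity $1 + \chi^2(P_1\Vert P_0) = \Ex_{\bmu,\bmu'}[\exp(q\,\bmu^\top\bmu')]$, which for the product prior $\calD$ factors coordinatewise into $\big(1 + \tfrac{1}{n}(e^{q\eps^2/\sqrt{n}}-1)\big)^n$; this collapses the whole argument to a one-line Taylor bound. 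Two small points: (i) $P_1$ is a mixture of products rather than a product measure, though this does not affect the computation; and (ii) with $q \le c\sqrt n/\eps^2$ for a small constant $c$, $\chi^2$ is bounded by a small constant (e.g.\ $e^{2c}-1$), not literally $o_n(1)$, but that already forces $\dtv(P_0,P_1) \le \tfrac12\sqrt{e^{2c}-1}$ to be well below $0.98$, which is all that is needed. As a bonus, your argument actually proves the sharper $q = \Omega(\sqrt n/\eps^2)$ without polylogarithmic loss, which is stronger than the stated $\tilde\Omega(\sqrt n/\eps^2)$ and makes the lower bound match the known $\tilde O(\sqrt n/\eps^2)$ uniformity-testing upper bound more tightly.
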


\subsection{Proof of \Lem{mainlemma1}} \label{sec:proof-mono-lb}

We prove by contradiction. {We assume that $q\le  \sqrt{n}/(c\eps^2 \log^4n)$ for some sufficiently large constant $c$} and show below
that the algorithm cannot distinguish between the two cases.

We set up some notation for the proof. We use $i \in [q]$ as an index over the set of queries,
while $j \in [n]$ indexes the dimension/coordinate.
We write $f_y, f_n \colon (\R^n)^q \to \R_{\geq 0}$ to denote the probability density functions of a tuple of $q$ independent samples from $\calN(0, I)$ (for $f_y$) or $\calN(\bmu, I)$ with $\bmu \sim \calD$ (for $f_n$) given by
\begin{align*}
f_y(x_1, \dots, x_q) &= \frac{1}{(2\pi)^{n/2}} \prod_{j=1}^n  \exp\left(-\frac{1}{2} \sum_{i=1}^q x_{ij}^2 \right) \qquad \text{and}\quad \\[1ex]
f_{n}(x_1,\dots, x_q) &= \frac{1}{(2\pi)^{n/2}} \prod_{j=1}^n \exp\left(-\frac{1}{2} \sum_{i=1}^q x_{ij}^2 \right) \left(1 - \frac{1}{\sqrt{n}} + \frac{1}{\sqrt{n}} \cdot \exp\left( \frac{\eps}{n^{1/4}}\sum_{i=1}^q x_{ij} -  \frac{q\eps^2}{2\sqrt{n}} \right)\right).
\end{align*}
The definition of $f_y$ comes from the product of $q$ many $n$-dimensional Gaussian p.d.fs; the definition of $f_n$ comes from the fact that each coordinate $j$ behaves independently under the draw of $\bmu \sim \calD$: with probability $1/\sqrt{n}$, $\bmu_i = \eps / n^{1/4}$ and is otherwise 0.

The main lemma below shows that these pdfs are nearly the same with high probability over draws $\bx_1,\ldots,\bx_q\sim \cN(0,I)$.
(Technically, we only need to lower bound $f_n$ by $f_y$.)

\begin{lemma} \label{lem:gaussian-calc} Consider $q$ independent draws of $\bx_i \sim \cN(0,I)$.
With probability at least $1- o_n(1)$, 
$$\frac{f_n(\bx_1, \ldots, \bx_q)}{f_y(\bx_1, \ldots, \bx_q)} \geq 1-o_n(1) .$$
\end{lemma}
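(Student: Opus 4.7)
}
The plan is to factor the likelihood ratio over the $n$ coordinates, take logarithms, and reduce the statement to two routine concentration estimates. For $j \in [n]$ let $\bS_j := \sum_{i=1}^q (\bx_i)_j$, which under $\bx_i \sim \calN(0, I)$ are i.i.d.\ $\calN(0, q)$. Set
$$
\bY_j := \exp\!\left(\tfrac{\eps}{n^{1/4}}\bS_j - \tfrac{q\eps^2}{2\sqrt n}\right),\qquad \bz_j := \tfrac{\bY_j - 1}{\sqrt n}.
$$
Directly from the formulas for $f_n$ and $f_y$,
$$
\frac{f_n(\bx_1,\ldots,\bx_q)}{f_y(\bx_1,\ldots,\bx_q)} \;=\; \prod_{j=1}^n (1 + \bz_j).
$$
The Gaussian moment generating function gives $\Ex[\bY_j] = 1$ and $\Ex[\bY_j^2] = \exp(q\eps^2/\sqrt n)$. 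Under the hypothesis $q \le \sqrt n/(c\eps^2\log^4 n)$, the exponent satisfies $q\eps^2/\sqrt n \le 1/(c\log^4 n)$, whence $\var(\bY_j) = O(1/\log^4 n)$.

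The first step is a uniform pointwise bound: with probability $1 - o_n(1)$ over $\bx_1,\ldots,\bx_q\sim\calN(0,I)$, every $\bY_j \leq \sqrt n / 2$. This is equivalent to $\bS_j \leq O(n^{1/4}\log n/\eps)$ for all $j$, and the standard Gaussian tail applied to $\bS_j \sim \calN(0, q)$ makes the single-coordinate failure probability $\exp(-\Omega(\log^6 n))$, which comfortably survives a union bound over the $n$ coordinates thanks to the polylogarithmic slack in $q$. On this good event, every $\bz_j \in [-1/\sqrt n, 1/2]$, and the elementary inequality $\log(1+z) \ge z - z^2$ (valid on $[-1/2, 1/2]$) delivers
\begin{equation*}
\log \frac{f_n}{f_y} \;\geq\; \bT_1 - \bT_2,\qquad \bT_1 := \frac{1}{\sqrt n}\sum_{j=1}^n (\bY_j - 1),\quad \bT_2 := \frac{1}{n}\sum_{j=1}^n (\bY_j - 1)^2.
\end{equation*}

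The second step is to bound $\bT_1$ and $\bT_2$ by elementary concentration. Because the $\bY_j$ are independent with common mean $1$, $\Ex[\bT_1] = 0$ and $\var(\bT_1) = \var(\bY_1) = O(1/\log^4 n)$, so Chebyshev yields $|\bT_1| \leq 1/\log n$ except with probability $o_n(1)$. Similarly $\Ex[\bT_2] = \var(\bY_1) = O(1/\log^4 n)$, so Markov yields $\bT_2 \leq 1/\log n$ with probability $1 - o_n(1)$. Intersecting these two events with the tail event from the first step produces $\log(f_n/f_y) \geq -2/\log n$, which gives $f_n/f_y \geq 1 - o_n(1)$, as required.

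The principal obstacle is the uniform-in-$j$ control in the first step: because $\bY_j$ is log-normal, a second-moment bound on $\bY_j$ alone is not enough to prevent some single factor $1 + \bz_j$ from escaping the range where the quadratic lower bound on $\log(1+z)$ is useful. This is exactly where the $\log^4 n$ slack in the hypothesis $q \leq \sqrt n/(c\eps^2 \log^4 n)$ earns its keep: it upgrades the Gaussian tail from polynomial to super-polynomial decay, which is what allows the union bound over all $n$ coordinates to close while still keeping $\var(\bY_j)$ small enough for Chebyshev and Markov to finish the proof.
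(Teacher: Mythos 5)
Your proof is correct and follows essentially the same strategy as the paper's: factor the likelihood ratio over the $n$ coordinates, take logarithms, apply $\log(1+z)\ge z-z^2$, and control the resulting linear term via the Gaussian moment generating function and Chebyshev (your $\bT_1$ is the paper's $\frac{1}{\sqrt n}\sum_j \bW_j$, and your variance calculation for $\bY_j$ is exactly the paper's Claim on sums of exponentials of Gaussians, just normalized so that $\E[\bY_j]=1$). The only minor organizational difference is in the quadratic term: you bound $\bT_2=\frac{1}{n}\sum_j(\bY_j-1)^2$ with a first-moment Markov bound, whereas the paper first proves the sharper pointwise bound $|\bW_j|\le 1/\log n$ for all $j$ via a Gaussian tail plus union bound and uses it both to validate the log inequality and to bound $\sum_j\bW_j^2/n$ deterministically; either route is valid, and your weaker pointwise bound $\bz_j\le 1/2$ (which in fact is not even needed, since $\bz_j\ge -1/\sqrt n$ always holds and $\log(1+z)\ge z-z^2$ is valid for all $z\ge -1/2$) suffices.
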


\begin{proof} 
We set $\bX_j := \sum_{i=1}^q \bx_{ij}$ for each $j\in [n]$. 
Then the ratio can be written as
\begin{align}
\frac{f_n(\bx_1, \ldots, \bx_q)}{f_y(\bx_1, \ldots, \bx_q)}
&= \prod_{j=1}^n\left( 1 + \frac{1}{\sqrt{n}}\left( \exp\left(\frac{\eps\bX_j}{n^{1/4}}  - \frac{q\eps^2}{2\sqrt{n}}\right)-1\right)\right)
\end{align}
and thus, 
\begin{align}
 \ln\left( \frac{f_n(\bx_1, \ldots, \bx_q)}{f_y(\bx_1,  \ldots, \bx_q)}\right) 
&= 
  \sum_{j=1}^n \ln\left[ 1 + \frac{\bW_j}{\sqrt{n}}\right],
  \quad \text{with\ }\bW_j \eqdef \exp\left(\frac{\eps \bX_j}{n^{1/4}} - \frac{q\eps^2}{2\sqrt{n}}\right) - 1.
  \label{eq:wj}
\end{align}
%
At this point, we use the distributional information of $\bx_1, \ldots, \bx_n\sim \cN(0,I)$:

\begin{claim} \label{clm:wj} With probability at least $1-1/n$, we have 
$|\bW_j| \leq 1/\log n$ for all $j \in [n]$.
\end{claim}

\begin{proof} Note that $\bX_j = \sum_{i=1}^q \bx_{ij}$ where each $\bx_{ij} \sim \cN(0,1)$. Hence, $\bX_j \sim \cN(0,q)$.
With probability at least $1-1/n^2$, we have $|\bX_j| \le 4\sqrt{q}\log n$. By a union bound over all coordinates,~with probability at least $1-1/n$, we have $|\bX_j| \le  4\sqrt{q}\log n$ for all $j\in [n]$.

When this is the case, using $ {q\le \sqrt{n}/(c\eps^2 \log^4n)}$
  we have (when $c$ is sufficiently large)
$$\frac{\eps |\bX_j|}{n^{1/4}} \le \frac{4\eps \sqrt{q} \log n }{n^{1/4}} \le \frac{1}{4\log n}\quad\text{and}\quad
\frac{q\eps^2}{2\sqrt{n}} \le \frac{1}{4\log n}.$$ 
Using $1/(1-z) \geq \exp(z) \geq 1+z$ for $|z|\le 1$, we have
$$ \exp\left(\frac{\eps \bX_j}{n^{1/4}} - \frac{q\eps^2}{2\sqrt{n}}\right) \geq 1-\frac{1}{2\log n}$$
and
$$ \exp\left(\frac{\eps \bX_j}{n^{1/4}} - \frac{q\eps^2}{2\sqrt{n}}\right) \leq \frac{ 1}{1-1/(2\log n)} \leq 1 + \frac{1}{\log n}.$$
So $|\bW_j|  \leq 1/\log n$ for all $j$ and the claim follows.
\end{proof}

We go back to \Eqn{wj}, and apply the inequality $\ln(1+z) \geq z-z^2$ for $|z| \leq 1/2$.
With probability at least $1-1/n$ over $\bx_1, \ldots, \bx_n\sim \cN(0,I)$, we have $|\bW_j|\le 1/\log n$ for all $j$ and thus,
\begin{align}
\ln\left(\frac{f_n(\bx_1,  \ldots, \bx_q)}{f_y(\bx_1,  \ldots, \bx_q)} \right)\nonumber
&\geq \frac{1}{\sqrt{n}}\sum_{j=1}^n \bW_j - \frac{1}{n} \sum_{j=1}^n \bW^2_j \\
&\geq \frac{1}{\sqrt{n}}\sum_{j=1}^n \bW_j - \frac{1}{\log^2 n} \ \ \ ~~~~~~~ \textrm{(by \Clm{wj}, $\bW^2_j \leq 1/\log^2 n$)}\nonumber \\
&= \frac{1}{\sqrt{n}} \left[ \exp\left(-\frac{q \eps^2}{2\sqrt{n}}\right) \sum_{j=1}^n \exp\left(\frac{\eps \bX_j}{n^{1/4}}\right) - n \right]  - \frac{1}{\log^2 n}. \label{eq:gaussian}
\end{align}
The heart of the matter is the
next claim on the distribution of sum of exponentials of Gaussians.

\begin{claim} \label{clm:exp-gauss} Let each $\bX_j \sim \cN(0,q)$ be independent.
With probability at least $ 1-1/\sqrt{\log n}$,  we have$$\sum_{j=1}^n \exp\left(\frac{\eps \bX_j}{n^{1/4}}\right) \geq n \cdot\exp\left(\frac{q \eps^2}{2\sqrt{n}}\right) - \frac{\sqrt{n}}{\log^{0.25} n}.$$
\end{claim}

\begin{proof} Denote $\bY_j = \eps \bX_j/n^{1/4}$ and consider the random variable $\bZ_j = \exp(\bY_j)$. 
Observe~that~$\bY_j$ $\sim \cN(0, q\eps^2/\sqrt{n})$. Using the formula for the moment generating function
    of the Gaussian~\cite{gaussian-wiki}, we have $\EX[\exp(t\bY_j)] = \exp(q\eps^2 t^2/(2\sqrt{n}))$.
Hence, $$\EX[\bZ_j] = \EX[\exp(\bY_j)] = \exp\left(\frac{q\eps^2}{2\sqrt{n}}\right)\quad\text{and}\quad \EX[\bZ^2_j] = \EX[\exp(2\bY_j)] = \exp\left(\frac{2q\eps^2}{\sqrt{n}}\right).$$
So $\textrm{var}[\bZ_j] = \exp(2q\eps^2/\sqrt{n}) - \exp(q\eps^2/\sqrt{n})\leq 1/\log n$ using  $q\eps^2/\sqrt{n} = o (1/\log n)$.
Overall, we have 
$$\EX\left[\sum_{j=1}^n \bZ_j\right] = n\cdot \exp\left(\frac{q\eps^2}{ 2\sqrt{n}}\right)\quad\text{and}\quad\var\left[\sum_{j=1}^n \bZ_j\right] \leq \frac{n}{\log n}$$ since
all $\bZ_j$'s are independent. By Chebyshev's inequality, we have 
$$\Pr\left[\left|\sum_{j=1}^n \bZ_j - n\cdot \exp\left(\frac{q\eps^2}{2\sqrt{n}}\right)\right| > \frac{\sqrt{n}}{\log^{0.25} n}\right] \leq \frac{1}{\sqrt{\log n}}.$$
This finishes the proof of the claim.
\end{proof}

In particular, with probability at least $1-1/n - 1/\sqrt{\log n} = 1-o_n(1)$, we get that
\begin{align*}
\exp\left(-\frac{q \eps^2}{2\sqrt{n}}\right) \sum_{j=1}^n \exp\left(\frac{\eps \bX_j}{n^{1/4}}\right) & \geq \exp\left(-\frac{q \eps^2}{2\sqrt{n}}\right)\cdot \left( n \cdot \exp\left(\frac{q \eps^2}{2\sqrt{n}}\right) - \frac{\sqrt{n}}{\log^{0.25} n}\right)  >  n - \frac{\sqrt{n}}{\log^{0.25} n} 
    \end{align*}
where in the last inequality we used $\exp(-q \eps^2/2\sqrt{n}) < 1$. Substituting in \Eqn{gaussian}, we get
\begin{align}
\ln\left(\frac{f_n(\bx_1, \ldots, \bx_q)}{f_y(\bx_1,  \ldots, \bx_q)} \right)
& > -\frac{ 1}{    \log^{0.25} n}  - \frac{1}{\log^2 n} \label{eq:18}
\end{align}
Hence, with probability at least $1-o_n(1)$,  we have
$$\frac{f_n(\bx_1,  \ldots, \bx_q)}{f_y(\bx_1, \ldots, \bx_q)} \geq \exp\left(-\frac{ 1}{ \log^{0.25} n} - \frac{1}{\log^2n}\right) = 1-o_n(1).$$
This finishes the proof of the lemma.
\end{proof}

We can now complete the proof of \Lem{mainlemma1}. 
Consider the set $Y \subset (\R^n)^{q}$ of tuples that lead  the algorithm to output ``standard.''
Then we must have $$\Prx_{ \bx_i \sim \cN(0,I)}\big[(\bx_1, \dots, \bx_q) \in Y\big] \geq 0.99.$$ Let $Y' \subseteq Y$
be the set of tuples that also satisfy the condition of \Lem{gaussian-calc}.
By a union bound $$\Prx_{ \bx_i \sim \cN(0,I)}\big[(\bx_1, \dots, \bx_q) \in Y'\big] \geq 0.99 - o_n(1) \geq 0.98.$$
Thus, $\int_{Y'} f_y(\bx_1, \dots, \bx_q) d \bx_1 d \bx_2 \ldots d \bx_q \geq 0.98$. 
By the condition of \Lem{gaussian-calc}, we have  $$\int_{Y'}  f_n(\bx_1, \dots, \bx_q)  d \bx_1 \ldots d \bx_q \geq (1-o_n(1))\cdot0.98 \geq 0.97.$$
This is exactly the probability that we see a tuple in $Y'$,
when we generate the samples $\bx_1, \cdots, \bx_q$ from the non-standard case. Thus,
with probability at least $0.97$, the algorithm outputs ``standard'' when the samples are generated
from the non-standard case. This completes the contradiction.

\bibliographystyle{alpha}
\bibliography{waingarten,monotonicity-full}

\end{document}